\documentclass[12pt]{amsart}%

\usepackage{amssymb}
\usepackage{amsmath}
\usepackage{amsthm}
\usepackage{amscd,mdwlist}
\usepackage[margin=1in]{geometry}
\usepackage{hyperref}
\usepackage{color}
\usepackage{cite}
    
\pagestyle{plain}

\theoremstyle{plain}
\newtheorem{theorem}{Theorem}[section]
\newtheorem{proposition}{Proposition}[section]
\newtheorem{corollary}{Corollary}[section]
\newtheorem{lemma}{Lemma}[section]
\newtheorem{definition}{Definition}[section]

\theoremstyle{remark}
\newtheorem{remark}{Remark}[section]

\DeclareMathOperator{\supp}{supp}
\DeclareMathOperator{\diam}{diam}

\DeclareMathOperator{\im}{Im}

\setcounter{tocdepth}{1}

\begin{document}

\title{On the viscous Burgers equation on metric graphs and fractals}

\author{Michael Hinz$^1$, Melissa Meinert$^2$}
\thanks{$^1$, $^2$ Research supported in part by the DFG IRTG 2235: 'Searching for the regular in the irregular: Analysis of singular and random systems'.}
\thanks{$^1$ Research supported in part by the 'Fractal Geometry and Dynamics' program, Institut Mittag-Leffler, Stockholm, 2017, and by the DFG CRC 1283: 'Taming uncertainty and profiting from randomness and low regularity in analysis, stochastics and their applications'.}
\address{$^1$Fakult\"at f\"ur Mathematik, Universit\"at Bielefeld, Postfach 100131, 33501 Bielefeld, Germany}
\email{mhinz@math.uni-bielefeld.de}
\address{$^2$Fakult\"at f\"ur Mathematik, Universit\"at Bielefeld, Postfach 100131, 33501 Bielefeld, Germany}
\email{mmeinert@math.uni-bielefeld.de}

\begin{abstract}

We study a formulation of Burgers equation on the Sierpinski gasket, which is the prototype of a p.c.f. self-similar fractal. One possibility is to implement Burgers equation as a semilinear heat equation associated with the Laplacian for scalar functions, just as on the unit interval. Here we propose a second, different formulation which follows from the Cole-Hopf transform and is associated with the Laplacian for vector fields. The difference between these two equations can be understood in terms of different vertex conditions for Laplacians on metric graphs. For the second formulation we show existence and uniqueness of solutions and verify the continuous dependence on the initial condition. We also prove that solutions on the Sierpinski gasket can be approximated in a weak sense by solutions to corresponding equations on approximating metric graphs. These results are part of a larger program discussing non-linear partial differential equations on fractal spaces.
\tableofcontents
\end{abstract}
\maketitle

\section{Introduction}

In this article we investigate the viscous Burgers equation on fractals. We propose a formulation of the equation on the Sierpinski gasket, endowed with its standard energy form and an arbitrary finite Borel measure with full support. We
define a notion of solution and verify the existence and uniqueness of solutions for initial conditions that are gradients of energy finite functions. In addition, we verify the continuous dependence of the solution on the initial conditions and provide an approximation of solutions by means of solutions to Burgers equations on approximating metric graphs. Our main tool is the Cole-Hopf transform, which also dictates the way we phrase the equation.

The viscous Burgers equation, \cite{Bu40, Bu48}, is one of the simplest nonlinear partial differential equations, on the real line it reads
\begin{equation}\label{E:unitBurgers}
u_t=\sigma u_{xx}-u_xu,
\end{equation}
see for instance \cite{Evans, Olver93, Olver14}. The nonlinear term $u_xu=\frac12(u^2)_x$ models a convection effect and the viscosity parameter $\sigma>0$ determines the strength of a competing diffusion. Physically reasonable solutions of the inviscid Burgers equation (the case $\sigma=0$) can be obtained as the limit of solutions to (\ref{E:unitBurgers}) for $\sigma\to 0$. On higher dimensional Euclidean domains or on manifolds (\ref{E:unitBurgers}) translates into 
\begin{equation}\label{E:RdBurgers}
u_t=\sigma \Delta u - \left\langle u, \nabla\right\rangle u,
\end{equation}
and this equation is also seen as a simplification of the Navier-Stokes equation. Here $\Delta$ is the Laplacian acting on vector fields.  Sometimes Burgers equation is formulated with $\frac12\nabla \left\langle u,u\right\rangle$ in place of $\left\langle u, \nabla\right\rangle u$, for gradient field solutions $u=\nabla h$ the terms agree. Equation (\ref{E:RdBurgers}) can be solved using the Cole-Hopf transform, \cite{Co51, Florin48, Hopf50}: If $w$ is a positive solution to the heat equation $w_t=\sigma\Delta w$, now with the Laplacian $\Delta$ acting on scalar valued functions, then the gradient field $u:=-2\sigma\nabla \log w$ solves (\ref{E:RdBurgers}). See also \cite{Bi03}. This transform is one example of an entire hierarchy of transforms, \cite{KuSh09, Tasso76}, and naturally related to integrable systems, \cite{Olver93}.

Analysis on fractals is still a relatively young area of research, \cite{Ba, Ki01, Ki03, Ki12, Ku89, Str06}. Classical notions of differentiation are not available, the analysis is based on energy (Dirichlet) forms and diffusion processes, \cite{FOT94}. The major part of the existing literature on partial differential equations on fractals is devoted to the study of linear elliptic or linear or semilinear parabolic equations for scalar valued functions. The literature on other types of equations is rather sparse, and studies of equations involving first order differential operators have been started only recently, e.g. in \cite{HRT13}, based on the first order calculus for Dirichlet forms proposed in \cite{CS03, CS09} and studied further in \cite{H14, HKT13, HR16, HRT13, HTams, HTc, IRT12}. The implementation of such equations is nontrivial, because the Dirichlet forms involved are not immediately given as integrals involving gradient operators. In fact, the definition of an associated gradient operator is a nontrivial subsequent step, \cite{CS03, CS09, HRT13, IRT12}.

Here our main aim is to propose a formulation of Burgers equation on sufficiently simple fractals that can be solved using the Cole-Hopf transform. For notational simplicity we consider the viscosity $\sigma=1$ only. Because it is the prototype of the easy to handle class of p.c.f. self similar fractals, \cite{Ki01}, we consider the equation on the Sierpinski gasket. In terms of analysis, the Sierpinski gasket shares many properties with compact subintervals of the real line, and two conceptually different generalizations emerge. Interpreting (\ref{E:unitBurgers}) (with $\sigma=1$) as a semilinear heat equation for scalar functions motivates a formulation of Burgers equation on the Sierpinski gasket as the formal problem
\begin{equation}\label{E:scalarBurgers}
\begin{cases}
g_t(t)&=  -d^\ast d g(t) - \frac12d(g^2)(t),\\
g(0)&=g_0,
\end{cases}
\end{equation}
where we symbolically write $d$ for the gradient operator taking a function into a vector field and $d^\ast$ for its adjoint (such that $-d^\ast$ is the divergence operator). This is a semilinear heat equation for the Laplacian $-d^\ast d$ acting on functions. In \cite{LiuQian17} it has been implemented as an $L^2$-Cauchy problem with respect to the natural self-similar Hausdorff measure, and the authors showed existence, uniqueness and regularity of weak solutions for (\ref{E:scalarBurgers}) with Dirichlet boundary conditions. As discussed in \cite{LiuQian16, LiuQian17}, this model is naturally related to control theory and (backward) stochastic differential equations. However, it cannot be solved using the Cole-Hopf transform. An alternative viewpoint upon (\ref{E:unitBurgers}) is to interpret it as an equation for vector fields, similar to (\ref{E:RdBurgers}). This suggests to formulate Burgers equation on the Sierpinski gasket as the formal problem
\begin{equation}\label{E:vectorBurgers}
\begin{cases}
u_t(t)&=  -d\:d^\ast u(t) - \frac12d(u^2)(t),\\
u(0)&=u_0.
\end{cases}
\end{equation}
Here $d\:d^\ast$ is the Laplacian acting on vector fields, so that (\ref{E:vectorBurgers}) has to be seen as a vector equation. The study of this model is the objective of the present article, and as mentioned above it can be implemented using first order calculus, \cite{CS03, HRT13, IRT12}. The volume measure can be fairly general. We define $d\:d^\ast$ and $u\mapsto \frac12 d(u^2)$ in distributional sense and use the Cole-Hopf transform to verify existence and uniqueness of solutions in the case that the initial condition is a gradient of an energy finite function. We also show their continuous dependence on the initial condition. 

The difference between (\ref{E:scalarBurgers}) and (\ref{E:vectorBurgers}) admits a very natural interpretation if one considers these equations on metric graphs, \cite{BK16, BLS2009, EP07, FKW07, Hae11, KS99, KS00, Ku04, Ku05, Mugnolo14}.  In this case (\ref{E:scalarBurgers}) is a semilinear heat equation for the Laplacian $d^\ast\:d$ with Kirchhoff vertex conditions, while (\ref{E:vectorBurgers}) employs the Laplacian $d\:d^\ast$ with another, different type of vertex conditions. We also verify existence and uniqueness of solutions to (\ref{E:vectorBurgers}) on compact metric graphs, as well as continuous dependence on initial conditions. In the metric graph case the operators involved and their domains admit fairly explicit expressions.

The energy form on the Sierpinski gasket can be written as the limit of energy forms on a sequence of metric graphs approximating the gasket, see for instance \cite{T08}. This raises the question whether solutions of (\ref{E:vectorBurgers}) on the Sierpinski gasket can be approximated by solutions of (\ref{E:vectorBurgers}) on the approximating metric graphs. If so, this might be regarded as another piece of evidence that on the gasket formulation (\ref{E:vectorBurgers}) is meaningful. In order to establish such a result we again use the Cole-Hopf transform and first verify a corresponding statement for solutions of heat equations, in other words, a generalized strong resolvent convergence for the Laplacians for scalar functions on varying $L^2$-space. A suitable concept has been established in \cite{KS03}, see for instance \cite{Hi09} for an application to fractals. However, practically it seems difficult to verify the characterization of such a convergence in terms of Dirichlet forms. It is much easier to verify sufficient conditions for generalized norm resolvent convergence of self-adjoint operators as considered in \cite{P12, PS17, PS17a}. This can be done in a quite straightforward manner if one uses the concept of $\delta$-quasi unitary equivalence introduced in \cite[Chapter 4, in particular, Definition 4.4.11, Proposition 4.4.15 and Theorem 4.2.10]{P12}. A related concept for sectorial operators was provided in \cite{MNP13}. Mimicking the proof of \cite[Theorem 1.1]{PS17} (where a similar approximation along a sequence of discrete graphs was shown), we verify the
norm resolvent convergence of the Laplacians. As a consequence we obtain the convergence of solutions of the heat equations in $L^2$ in the strong sense and in the Dirichlet form domain in the weak sense. This is sufficient to verify that solutions of (\ref{E:vectorBurgers}) on approximating metric graphs converge to the solution of (\ref{E:vectorBurgers}) on the gasket in a suitable weak sense. To formulate the identification operators involved we rely on approximations by piecewise harmonic respectively edge-wise linear functions. 

We finally like to mention that a version of the viscous Burgers equation (\ref{E:unitBurgers}) on the real line appears as the limiting deterministic equation for the weakly asymmetric nearest neighbor exclusion process, \cite{Gae88, DMPS87}.
However, on the Sierpinski gasket the deterministic equations related to weakly asymmetric exclusion processes do involve nonlinearities of divergence type rather than of gradient type, see for instance \cite{ChHT17}. Further ideas, such as studies of inviscid Burgers equations and vanishing viscosity limits on suitable fractal spaces will be subject of subsequent studies.

The present paper is organized as follows. In Section \ref{S:metric} we recall basics on metric graphs, related energies and Laplacians, provide adequate formulations of (\ref{E:scalarBurgers}) and (\ref{E:vectorBurgers}) and prove existence, uniqueness and continuous dependence on initial conditions for (\ref{E:vectorBurgers}). In Section \ref{S:SG} we first recall basic concepts of the analysis on the Sierpinski gasket and the related vector analysis. We briefly discuss problem (\ref{E:scalarBurgers}) and the results of \cite{LiuQian17} and then formulate (\ref{E:vectorBurgers}). Again we state existence, uniqueness and continuous dependence on initial conditions, the proofs are only minor modifications compared to the metric graph case. Section \ref{S:approx} provides metric graph approximation results for solutions of the heat equation and for solutions of (\ref{E:vectorBurgers}). An appendix contains some technical calculations and, to make the paper self-contained, a proof of the generalized norm resolvent convergence.

For quantities  $(f,g)\mapsto \mathcal{Q}(f,g)$ depending on two arguments $f,g$ in a symmetric way we use the notation $\mathcal{Q}(f):=\mathcal{Q}(f,f)$.


\section*{Acknowledgements} We would like to thank Xuan Liu, Olaf Post, Robert Schippa, Jan Simmer and Alexander Teplyaev for inspiring and useful comments.

\section{Heat and Burgers equations on metric graphs}\label{S:metric}

\subsection{Preliminaries on metric graphs} Mainly following \cite{BLS2009, Hae11} we provide some basics on metric graphs. A \emph{metric graph} is a quadruple $\Gamma = (E, V, i, j)$ consisting of a countable set $E$ of open intervals $e=(0,l_e)$ with $l_e\in (0,+\infty]$, a countable set $V$ and maps $i: E \rightarrow V$ and $ j:\{e \in E \mid l_e <+\infty \} \rightarrow V $. To the elements $e$ of $E$ we refer as \emph{edges}, to the elements $v$ of $V$ as \emph{vertices}. Given $e=(0,l_e)\in E$, we call $l_e$ the \emph{length} of $e$, $i(e)$ its \emph{initial} and $j(e)$ its \emph{terminal vertex}. An edge $e\in E$ and a vertex $p\in V$ are said to be \emph{incident}, $e\sim p$, if $p$ is the initial or the terminal vertex of $e$. Two distinct vertices $p,q\in V$ are said to be \emph{neighbors}, $p\sim q$, if they are incident to the same edge.
The graph $\Gamma$ is said to have \emph{no loops} if there is no edge $e$ with $i(e)=j(e)$ and that $\Gamma$ has \emph{no multiple edges} if for any two different edges $e$ and $e'$ the sets $\{i(e), j(e)\}$ and $\{i(e'),j(e')\}$ are different. A metric graph $\Gamma$ is called \emph{connected}, if for any distinct $p,q\in V$ there exists $p_0,...,p_n\in V$ such that $p_0=p$, $p_n=q$ and $p_i\sim p_{i-1}$ for $i=1,..,n$. We set $X_e := \{e\} \times (0,l_e)$ and define the disjoint union 
\begin{equation}\label{E:spaceX}
X_\Gamma:= V \cup \bigcup_{e \in E} X_e. 
\end{equation}
For any edge $e$ let $\pi_e : X_e \rightarrow (0, l_e)$ denote the projection $(e,t) \mapsto t$ onto the second component of $X_e$. For $e\in E$ with $l_e<+\infty$ we set $\bar X_e := X_e \cup \{ i(e), j(e) \}$ and for $e\in E$ with $l_e=+\infty$ we set 
$\bar X_e := X_e \cup \{ i(e)\}$. Let $X_\Gamma$ be endowed with the unique topology such that for any $e\in E$ the mapping $\pi_e$ 
extends to a homeomorphism $\pi_e : \bar X_e \rightarrow [0, l(e)]$ that satisfies $\pi_e(i(e))=0$ and, in case that $l_e<+\infty$, also $\pi_e(j(e))=l(e)$. Given a real valued function $f$ on $X_\Gamma$ we define a function on each edge $e\in E$ by $f_e:=f\circ\pi_e^{-1}$. If $f$ is continuous on $X_\Gamma$ then for each $e\in E$ the function $f_e$ is continuous on $e$ and its value at each vertex is the limit of its values on any adjacent edge. Moreover, the canonical length metric metrizes this topology and makes $X$ into a locally compact separable metric space.  The space $X_\Gamma$ is compact if and only if $E$ is a finite set and all edges have finite length, and $\Gamma$ is called \emph{compact} if $X_\Gamma$ is compact. In what follows we assume that \emph{$\Gamma$ is a compact connected metric graph having no loops or multiple edges.}

On each edge $e\in E$ let $\dot{W}^{1,2}(e)$ denote the homogeneous Sobolev space consisting of locally Lebesgue integrable functions $g$ on $e$ such that 
\[\mathcal{E}_e(g):=\int_0^{l_e} (g'(s))^2\:ds<+\infty,\]
where the derivative $g'$ of $g$ is understood in the distributional sense.

For a function $f$ on $X$ such that $f_e\in  \dot{W}^{1,2}(e)$ for any $e\in E$ we can define its energy $\mathcal{E}_\Gamma(f)$ on $\Gamma$ by the sum 
\[\mathcal{E}_\Gamma(f):=\sum_{e\in E} \mathcal{E}_e (f_e).\]
The space of continuous functions on $\Gamma$ with finite energy we denote by
\[\dot{W}^{1,2}(X_\Gamma):=\{f\in C(X_\Gamma): \text{for any $e\in E$ we have $f_e\in  \dot{W}^{1,2}(e)$, and $\mathcal{E}_\Gamma(f)<+\infty$}\}.\]
By polarization we obtain a nonnegative definite symmetric bilinear form $(\mathcal{E}_{\Gamma},\dot{W}^{1,2}(X_\Gamma))$ satisfying the Markov property. Moreover, $(\mathcal{E}_{\Gamma},\dot{W}^{1,2}(X_\Gamma))$ is a resistance form on $X$ in the sense of \cite[Definition 2.8]{Ki03}. In particular, on any single edge $e\in E$ the form $\mathcal{E}_e$ satisfies 
\begin{equation}\label{E:resistonedge}
(f_e(s)-f_e(s'))^2\leq l_e\mathcal{E}_{e}(f_e)
\end{equation}
for any $f\in \dot{W}^{1,2}(X_\Gamma)$ and any $s,s'\in e$. 

Now suppose $\mu_\Gamma$ is an atom free nonnegative Radon measure on $X_\Gamma$ with full support. Then $(\mathcal{E}_\Gamma,\dot{W}^{1,2}(X_\Gamma))$ is a strongly local regular Dirichlet form on $L^2(X_\Gamma,\mu_\Gamma)$ in the sense of \cite{FOT94}. We write $W^{1,2}(X_\Gamma,\mu_\Gamma)$ for the Hilbert space $\dot{W}^{1,2}(X_\Gamma)$ with norm 
\begin{equation}\label{E:W12norm}
\left\|f\right\|_{W^{1,2}(X_\Gamma,\mu_\Gamma)}:=\left(\mathcal{E}_\Gamma(f)+\left\|f\right\|_{L^2(X_\Gamma,\mu_\Gamma)}^2\right)^{1/2}, \quad f\in W^{1,2}(X_\Gamma,\mu_\Gamma).
\end{equation}
A function $f\in W^{1,2}(X_\Gamma,\mu_\Gamma)$ has zero energy $\mathcal{E}_\Gamma(f)=0$ if and only if $f$ is constant on $X_\Gamma$, and 
\begin{equation}\label{E:Linftybound}
\left\|f\right\|_{\sup}\leq c\:\left\|f\right\|_{W^{1,2}(X_\Gamma,\mu_\Gamma)},\quad f\in W^{1,2}(X_\Gamma,\mu_\Gamma),
\end{equation}
where $c>0$ is a constant not depending on $f$, see \cite[Corollary 2.2]{Hae11}. Alternatively, one can follow the arguments of \cite[Lemma 5.2.8]{Ki01}.

In what follows we assume $(c_e)_{e\in E}$ is a family of real numbers $c_e$ such that $\inf_{e\in E} c_e>0$ and $\sup_{e\in E} c_e<+\infty$ and that $\mu_\Gamma$ is the measure on $X_\Gamma$ determined by 
\begin{equation}\label{E:measure}
\mu_\Gamma|_{X_e}\circ \pi_e^{-1}=c_e\lambda^1|_{e},\quad e\in E,
\end{equation}
where $\lambda^1$ denotes the Lebesgue measure on the real line. This class of measures is sufficiently large for our purposes.

\subsection{Kirchhoff Laplacian} Under the stated assumption the generator of the Dirichlet form $(\mathcal{E}_{\Gamma}, W^{1,2}(X_\Gamma,\mu_\Gamma))$ is the nonpositive definite self-adjoint operator $(\mathcal{L}_\Gamma,\mathcal{D}(\mathcal{L}_\Gamma))$ on $L^2(X_\Gamma,\mu_\Gamma)$, where $\mathcal{D}(\mathcal{L}_\Gamma)$ is the collection of all $f \in W^{1,2}(X_\Gamma,\mu_\Gamma)$ such that $f_e\in W^{2,2}(e)$ for all $e\in E$ and $\sum_{e \sim p} U_p(e)f_e'(p) = 0$ for all $p \in V$ and 
\begin{equation}\label{E:metricLaplace}
\mathcal{L}f=\sum_{e\in E} c_e^{-1}\mathbf{1}_e\:f_e''
\end{equation}
for all $f\in\mathcal{D}(\mathcal{L}_\Gamma)$.
Here $f_e'(p)$ denotes the trace of $f_e'\in W^{1,2}(e)$ on $p$, and $U_p(e)=1$ if $p=j(e)$ and $U_p(e)=-1$ if $p=i(e)$, so that at both points we consider the normals outgoing from the edge e (and ingoing into $i(e)$ and $j(e)$, respectively). To $(\mathcal{L}_\Gamma,\mathcal{D}(\mathcal{L}_\Gamma))$ one refers as \emph{Laplacian with Kirchhoff vertex conditions}, see e.g. \cite[Definition 5]{FKW07}. On vertices that are incident to one edge only, this forces zero Neumann boundary conditions.

A function $f\in L^2(X_\Gamma,\mu_\Gamma)$ is already uniquely determined by the functions $f_e$, and we may write $f=(f_e)_{e\in E}$. Given a function $f\in W^{1,2}(X_\Gamma,\mu_\Gamma)$ we can define a function $df=((df)_e)_{e\in E}$ in $L^2(X_\Gamma,\mu_\Gamma)$ by $(df)_e=c_e^{-1/2}f_e'$ for any $e\in E$ where each $f_e'$ is understood in distributional sense. This yields a bounded linear operator $d:W^{1,2}(X_\Gamma,\mu_\Gamma)\to L^2(X_\Gamma,\mu_\Gamma)$, note that for any $f,g\in W^{1,2}(X_\Gamma,\mu_\Gamma)$ we have
\begin{equation}\label{E:Kirchoffenergy}
\left\langle df,dg\right\rangle_{L^2(X_\Gamma,\mu_\Gamma)}=\mathcal{E}(f,g).
\end{equation}

\begin{remark}\label{R:isoiso}
Since $X_\Gamma$ is compact, the kernel of $d$ consists exactly of the constants, $\ker d=\mathbb{R}$, so that 
$W^{1,2}(X_\Gamma,\mu_\Gamma)/\mathbb{R}$ and the image $\im d$ of $d$ in $L^2(X_\Gamma,\mu_\Gamma)$ are isomorphic as vector spaces and by (\ref{E:Kirchoffenergy}) even isometrically isomorphic as Hilbert spaces.
\end{remark}

Since due to (\ref{E:Linftybound}) the space $W^{1,2}(X_\Gamma,\mu_\Gamma)$ is an algebra with pointwise multiplication, we can observe the Leibniz rule $d(fg)=(df)g+fdg$, for any $f,g$ from this space. The operator $d$ may also be seen as a densely defined closed linear operator on $L^2(X_\Gamma,\mu_\Gamma)$ with domain $W^{1,2}(X_\Gamma,\mu_\Gamma)$, and using integration by parts on the individual edges and Fubini's theorem, the adjoint $d^\ast$ of $d$ is seen to be $d^\ast f=((d^\ast f)_e)_{e\in E}$ with $(d^\ast f)_e=-c_e^{-1/2}f_e'$ for $f$ from its domain $\mathcal{D}(d^\ast)$ consisting of all $f\in L^2(X_\Gamma,\mu_\Gamma)$ such that $f_e\in \dot{W}^{1,2}(e)$ for all $e\in E$ and 
\begin{equation}\label{E:vertexdast}
\sum_{e\sim p} c_e^{1/2}U_p(e)f_e(p)=0
\end{equation} 
for all $p\in V$. Similarly as before $f_e(p)$ is understood in the sense of traces. By general theory $d^\ast$ is closed in $L^2(X_\Gamma,\mu_\Gamma)$ and its domain $\mathcal{D}(d^\ast)$ is dense. 
A function $f\in W^{1,2}(X_\Gamma,\mu_\Gamma)$ is in $\mathcal{D}(\mathcal{L}_\Gamma)$ if and only if $df$ is in $\mathcal{D}(d^\ast)$, and in this case we have $\mathcal{L}_\Gamma f=-d^\ast df$. 

\subsection{Vector Laplacian}
Viewed as the target space of the derivation $d$, the space $L^2(X_\Gamma,\mu_\Gamma)$ can also be interpreted as the space of $L^2$-vector fields. Its subspace $\ker d^\ast$ is trivial if and only if $\Gamma$ has no cycles (i.e. is a tree), see \cite[Proposition 5.1]{IRT12}. We follow \cite{BK16} and define a natural nonnegative definite closed quadratic form on the space $L^2(X_\Gamma,\mu_\Gamma)$ of $L^2$-vector fields by setting $\mathcal{D}(\vec{\mathcal{E}}_\Gamma):=\mathcal{D}(d^\ast)$ and
\begin{equation}\label{E:antiKirchhoffenergy}
\vec{\mathcal{E}}_\Gamma(u,v):=\left\langle d^\ast u, d^\ast v\right\rangle_{L^2(X_\Gamma,\mu_\Gamma)},\quad u,v\in \mathcal{D}(\vec{\mathcal{E}}_\Gamma).
\end{equation}

\begin{remark}\mbox{}
\begin{enumerate}
\item[(i)] If $\Gamma$ has only one single edge $e$, then $(\vec{\mathcal{E}}_\Gamma, \mathcal{D}(\vec{\mathcal{E}}_\Gamma))$ is the Dirichlet form associated with the Laplacian on $e$ with Dirichlet boundary conditions, \cite[Example 4.1]{BK16}.
\item[(ii)] In general $(\vec{\mathcal{E}}_\Gamma, \mathcal{D}(\vec{\mathcal{E}}_\Gamma))$ is not a Dirichlet form. Suppose $\Gamma$ has a vertex $p\in V$ with at least three incident edges $e_1$, $e_2$, $e_3$, and $c_{e_i}=1$, $i=1,2,3$. If $e_1$ and $e_2$ have $p$ as terminal and $e_3$ has it as initial vertex, consider a function $v\in \mathcal{D}(\vec{\mathcal{E}}_\Gamma)\cap L^\infty(X_\Gamma,\mu_\Gamma)$ that satisfies $v_{e_1}=1$, $v_{e_2}=1$ and $v_{e_3}=2$.
Then the square $v^2$ of $v$ violates (\ref{E:vertexdast}) at $p$. Consequently, the Markov property cannot hold.
\end{enumerate}

\end{remark}
The generator of $(\vec{\mathcal{E}}_\Gamma,\mathcal{D}(\vec{\mathcal{E}}_\Gamma))$ is the nonnegative definite self-adjoint operator $(\vec{\mathcal{L}}_\Gamma,\mathcal{D}(\vec{\mathcal{L}}_\Gamma))$, given by $\vec{\mathcal{L}}_\Gamma v:=-dd^\ast v$ for all functions $v$ from its domain $\mathcal{D}(\vec{\mathcal{L}}_\Gamma)$, which is the space of all  $v\in \mathcal{D}(d^\ast)$ such that $d^\ast v=(-v_e')_{e\in E}$ is in $W^{1,2}(X_\Gamma,\mu_\Gamma)$. 

\begin{remark}
To the vertex conditions associated with $\vec{\mathcal{L}}_\Gamma$ the authors of \cite{BK16} referred to as anti-Kirchhoff conditions, they slightly differ from those specified in \cite[Definition 6]{FKW07}.
\end{remark}

Since $X_\Gamma$ is compact, a function $f\in W^{1,2}(X_\Gamma,\mu_\Gamma)$ satisfies $\left\langle d^\ast v,f \right\rangle_{L^2(X_\Gamma,\mu_\Gamma)}=0$ for all $d^\ast v$ with $v\in \mathcal{D}(\vec{\mathcal{L}}_\Gamma)$ if and only if $f$ is constant on $X_\Gamma$: In fact, this is equivalent to requiring $\left\langle v,df\right\rangle_{L^2(X_\Gamma,\mu_\Gamma)}=0$ for such $v$, and since $\mathcal{D}(\vec{\mathcal{L}}_\Gamma)$ is dense in $L^2(X_\Gamma,\mu_\Gamma)$ this is equivalent to $f\in \ker d$. Moreover, because the constants form a closed subspace of $L^2(X_\Gamma,\mu_\Gamma)$ it follows that each function $\varphi\in W^{1,2}(X_\Gamma,\mu_\Gamma)$ can uniquely be written as a sum
\begin{equation}\label{E:decomptestfcts}
\varphi=d^\ast v + c
\end{equation}
for some $v\in\mathcal{D}(\vec{\mathcal{L}}_\Gamma)$ and $c\in\mathbb{R}$.

\subsection{Distributional definitions}

Let $(W^{1,2}(X_\Gamma,\mu_\Gamma))^\ast$ denote the dual of $W^{1,2}(X_\Gamma,\mu_\Gamma)$. We can interpret $d^\ast$ and $\mathcal{L}_\Gamma$ in the distributional sense as bounded linear operators $d^\ast: L^2(X_\Gamma,\mu_\Gamma)\to (W^{1,2}(X_\Gamma,\mu_\Gamma))^\ast$, defined by 
\begin{equation}\label{E:adjointdist}
d^\ast v\:(\varphi):=\left\langle v, d\varphi\right\rangle_{L^2(X_\Gamma,\mu_\Gamma)}, \quad \varphi \in W^{1,2}(X_\Gamma,\mu_\Gamma),
\end{equation}
and $\mathcal{L}_\Gamma:W^{1,2}(X_\Gamma,\mu_\Gamma)\to (W^{1,2}(X_\Gamma,\mu_\Gamma))^\ast$, defined by 
\begin{equation}\label{E:distL}
\mathcal{L}_\Gamma f (\varphi):=-\mathcal{E}_\Gamma(f,\varphi),\quad \varphi \in W^{1,2}(X_\Gamma,\mu_\Gamma).
\end{equation}
The operator $\vec{\mathcal{L}}_\Gamma$ may also be interpreted in the distributional sense as a bounded linear operator $\vec{\mathcal{L}}_\Gamma:L^2(X_\Gamma,\mu_\Gamma)\to (\mathcal{D}(\vec{\mathcal{L}}_\Gamma))^\ast$ defined by
\begin{equation}\label{E:vecLdist}
\vec{\mathcal{L}}_\Gamma v (w):=-d^\ast v (d^\ast w),\quad w\in\mathcal{D}(\vec{\mathcal{L}}_\Gamma),
\end{equation} 
where (\ref{E:adjointdist}) is used. Finally, we also define the operator $d$ on $L^1(X_\Gamma,\mu_\Gamma)$ in a suitable distributional sense: Let $\mathcal{D}(\vec{\mathcal{L}}_\Gamma)$  be endowed with the norm $v\mapsto \left\|d^\ast v\right\|_{W^{1,2}(X_\Gamma,\mu_\Gamma)}$ and let $(\mathcal{D}(\vec{\mathcal{L}}_\Gamma))^\ast$ denote its topological dual. We define $d:L^1(X_\Gamma,\mu_\Gamma)\to (\mathcal{D}(\vec{\mathcal{L}}_\Gamma))^\ast$  by
\begin{equation}\label{E:ddist}
df(v):=\int_{X_\Gamma} d^\ast v\:f\:d\mu_\Gamma, \quad v\in \mathcal{D}(\vec{\mathcal{L}}_\Gamma).
\end{equation}
Then $|df(v)|\leq c\left\|d^\ast v\right\|_{W^{1,2}(X_\Gamma,\mu_\Gamma)}\left\|f\right\|_{L^1(X_\Gamma,\mu_\Gamma)}$ for any $f\in L^1(X_\Gamma,\mu_\Gamma)$ by (\ref{E:Linftybound}), and for $f\in W^{1,2}(X_\Gamma,\mu_\Gamma)$ we have $df(v)=\left\langle v,df\right\rangle_{L^2(X_\Gamma,\mu_\Gamma)}$. If $\Gamma$ has a single edge $e$ only and $f_e'$ denotes the distributional derivative of $f_e$ on $e$, then $df(v)=c_e^{1/2}f_e'(v)$.

\subsection{Kirchhoff Burgers equation} On the unit interval the viscous Burgers equation is given by (\ref{E:unitBurgers}). If we now consider Burgers equation with respect to Dirichlet boundary conditions, existence and uniqueness for arbitrary finite time horizons can for instance be obtained in a monotone operator setup, \cite[Theorem 1.1 and Example 3.2]{Liu11}. If endowed with Neumann boundary conditions, the unit interval $[0,1]$ can be seen as the metric graph having only the single edge $e=(0,1)$ and vertex set $V=\left\lbrace i(e), j(e)\right\rbrace$, and this suggests to generalize the Cauchy problem for (\ref{E:unitBurgers}) to a compact connected metric graph $\Gamma$ having no loops or multiple edges by considering the formal problem
(\ref{E:scalarBurgers}). There are various ways to formulate (\ref{E:scalarBurgers}) rigorously as a Cauchy problem
\begin{equation}\label{E:scalarCauchy}
\begin{cases}
g_t(t)&=  \mathcal{L}_\Gamma g(t)-\frac12 d(g^2)(t),\\
g(0)&=g_0
\end{cases}
\end{equation}
\emph{with initial condition $g_0\in L^2(X_\Gamma,\mu_\Gamma)$}. Imposing additional Dirichlet boundary conditions on a finite subset of $\Gamma$ and assuming $g_0\in W^{1,2}(X_\Gamma,\mu_\Gamma)$, one can invoke well known semigroup methods to obtain solutions to (\ref{E:scalarCauchy}) on $\Gamma$ for sufficiently small $T$, \cite[Section 6.3, Theorem 3.1]{P83}. We strongly believe that the arguments of \cite{LiuQian17}, which make heavy use of (\ref{E:Linftybound}), can be combined with known heat kernel estimates, see \cite{Hae11} and the references cited there, to obtain global weak solutions under Dirichlet boundary conditions, \cite[Definition 4.13]{LiuQian17}.

\subsection{Burgers equation via Cole-Hopf} As before we assume $\Gamma$ is a compact connected metric graph $\Gamma$ having no loops or multiple edges. An alternative generalization of (\ref{E:unitBurgers}) to $\Gamma$ can be obtained applying the Cole-Hopf transform to solutions of the heat equation
\begin{equation}\label{E:Kirchhoffheat}
\begin{cases}
w_t(t)&=\mathcal{L}_\Gamma w(t),\quad t>0,\\
w(0)&=w_0,
\end{cases}
\end{equation}
for the Kirchhoff Laplacian $\mathcal{L}_\Gamma$. This leads to the formal problem (\ref{E:vectorBurgers}) which we discuss in the present article and which in general is different from (\ref{E:scalarBurgers}). Assume that $w_0\in L^2(X_\Gamma,\mu_\Gamma)$ is nonnegative $\mu_\Gamma$-a.e. and strictly positive on some set of positive measure $\mu_\Gamma$. The unique solution to (\ref{E:Kirchhoffheat}), seen as a Cauchy problem in $L^2(X_\Gamma,\mu_\Gamma)$, is $w(t)=e^{t\mathcal{L}_\Gamma} w_0$. For any $t>0$ the function $w(t)$ is in $W^{1,2}(X_\Gamma,\mu_\Gamma))$, it is bounded, continuous and also strictly positive on $X_\Gamma$, because $(e^{t\mathcal{L}_\Gamma})_{t>0}$ is conservative. Therefore, by the chain rule (with respect to $t$),
\begin{equation}\label{E:defh}
h:=-2\log w 
\end{equation}
defines a differentiable function $h:(0,\infty)\to W^{1,2}(X_\Gamma,\mu_\Gamma)$, and it satisfies the \emph{potential Burgers equation}
\begin{equation}\label{E:metricKPZ}
\left\langle h_t(t),\varphi\right\rangle_{L^2(X_\Gamma,\mu_\Gamma)} = \mathcal{L}_\Gamma h(t)(\varphi)  - \frac12\left\langle dh(t), dh(t)\right\rangle(\varphi)
\end{equation}
for any $\varphi\in W^{1,2}(X_\Gamma,\mu_\Gamma)$, where we write $\left\langle dh(t), dh(t)\right\rangle(\varphi):=\left\langle \varphi\: dh(t), dh(t)\right\rangle_{L^2(X_\Gamma,\mu_\Gamma)}$.
See for instance \cite[Section 8.4]{Olver14}. Its derivative
\begin{equation}\label{E:ColeHopfsol}
u(t):=d h(t),
\end{equation} 
is a function $u:(0,\infty)\to L^2(X_\Gamma,\mu_\Gamma)$, and writing $u_t(t)(v):=\left\langle u_t(t),v\right\rangle_{L^2(X_\Gamma,\mu_\Gamma)}$, $v\in \mathcal{D}(\vec{\mathcal{L}}_\Gamma)$, we can formulate (\ref{E:vectorBurgers}) rigorously as the Cauchy problem 
\begin{equation}\label{E:vectorCauchy}
\begin{cases}
u_t(t)& =\vec{\mathcal{L}}_\Gamma u(t) -\frac12 d(u^2)(t),\quad t>0,\\
u(0)&=u_0,\end{cases}
\end{equation}
where $\vec{\mathcal{L}}_\Gamma u(t):=\vec{\mathcal{L}}_\Gamma (u(t))$ and $d(u^2)(t):=d(u^2(t))$ are understood in terms of the distributional definitions (\ref{E:vecLdist}) and (\ref{E:ddist}). We use the following notion of solution.

\begin{definition}\label{D:solmetric}
A function $u\in C([0,+\infty), L^2(X_\Gamma,\mu_\Gamma))\cap C^1((0,+\infty), L^2(X_\Gamma,\mu_\Gamma))$ is called  a \emph{solution} to (\ref{E:vectorCauchy}) \emph{with initial condition $u_0\in L^2(X_\Gamma,\mu_\Gamma)$} if $u$ satisfies the first identity in (\ref{E:vectorCauchy}) in $(\mathcal{D}(\vec{\mathcal{L}}_\Gamma))^\ast$ and the second in $L^2(X_\Gamma,\mu_\Gamma)$.
\end{definition}

We first observe the structure of solutions. The space $\im d$ is a closed subspace of $L^2(X_\Gamma,\mu_\Gamma)$ and 
$L^2(X_\Gamma,\mu_\Gamma)$ admits the orthogonal decomposition $L^2(X_\Gamma,\mu_\Gamma)=\im d\oplus \ker d^\ast$.

\begin{theorem}\label{T:metricstructure}
Suppose $u$ is a solution to (\ref{E:vectorCauchy}) with initial condition $u_0$. Let $\eta_0\in \ker d^\ast$ and $h_0\in W^{1,2}(X_\Gamma,\mu_\Gamma)$ be such that $u_0=dh_0+\eta_0$. Then $u$ is of form $u(t)=dh(t)+\eta_0$, $t\geq 0$, with a function $h:[0,+\infty)\to W^{1,2}(X_\Gamma,\mu_\Gamma)$ satisfying $dh(0)=dh_0$.
\end{theorem}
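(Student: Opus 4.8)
The plan is to exploit the orthogonal decomposition $L^2(X_\Gamma,\mu_\Gamma)=\im d\oplus\ker d^\ast$ and to show that the $\ker d^\ast$-component of $u(t)$ is independent of $t$. Observe first that, since $\im d$ is closed and $\ker d^\ast=(\im d)^\perp$, the decomposition $u_0=dh_0+\eta_0$ with $dh_0\in\im d$ and $\eta_0\in\ker d^\ast$ is exactly the orthogonal one; writing $P$ for the orthogonal projection onto $\im d$ we thus have $Pu_0=dh_0$ and $(I-P)u_0=\eta_0$, and by Remark \ref{R:isoiso} the element $dh_0$ determines $h_0$ up to an additive constant.

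The key point is that $\ker d^\ast\subseteq\mathcal{D}(\vec{\mathcal{L}}_\Gamma)$: if $\eta\in\ker d^\ast$ then $d^\ast\eta=0\in W^{1,2}(X_\Gamma,\mu_\Gamma)$, so $\eta$ belongs to $\mathcal{D}(\vec{\mathcal{L}}_\Gamma)$ and is an admissible test function in the sense of Definition \ref{D:solmetric}. Inserting $v=\eta$ into the first identity of (\ref{E:vectorCauchy}) and using the distributional definitions (\ref{E:vecLdist}) and (\ref{E:ddist}), both terms on the right-hand side vanish: $\vec{\mathcal{L}}_\Gamma u(t)(\eta)=-d^\ast u(t)(d^\ast\eta)=0$ because $d^\ast\eta=0$, and $d(u^2)(t)(\eta)=\int_{X_\Gamma}d^\ast\eta\;u^2(t)\,d\mu_\Gamma=0$ for the same reason. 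Hence $\frac{d}{dt}\langle u(t),\eta\rangle_{L^2(X_\Gamma,\mu_\Gamma)}=\langle u_t(t),\eta\rangle_{L^2(X_\Gamma,\mu_\Gamma)}=0$ for every $t>0$, where differentiating the scalar function $t\mapsto\langle u(t),\eta\rangle_{L^2(X_\Gamma,\mu_\Gamma)}$ under the pairing is legitimate since $u\in C^1((0,+\infty),L^2(X_\Gamma,\mu_\Gamma))$. Integrating and using continuity of $u$ at $t=0$ yields $\langle u(t),\eta\rangle_{L^2(X_\Gamma,\mu_\Gamma)}=\langle u_0,\eta\rangle_{L^2(X_\Gamma,\mu_\Gamma)}$ for all $t\geq 0$ and all $\eta\in\ker d^\ast$, i.e. $(I-P)u(t)=(I-P)u_0=\eta_0$ for all $t\geq 0$.

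It remains to produce $h$. From $u(t)=Pu(t)+\eta_0$ with $Pu(t)\in\im d$ and the isometric isomorphism $d\colon W^{1,2}(X_\Gamma,\mu_\Gamma)/\mathbb{R}\to\im d$ of Remark \ref{R:isoiso}, we may define $h(t)\in W^{1,2}(X_\Gamma,\mu_\Gamma)$ to be the unique representative of $d^{-1}Pu(t)$ subject to a fixed normalization (say $\int_{X_\Gamma}h(t)\,d\mu_\Gamma=0$). Then $dh(t)=Pu(t)$, so $u(t)=dh(t)+\eta_0$ for all $t\geq 0$, and at $t=0$ we get $dh(0)=Pu(0)=Pu_0=dh_0$. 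Composing the bounded maps $v\mapsto Pv$, $d^{-1}$ and the normalization further gives $h\in C([0,+\infty),W^{1,2}(X_\Gamma,\mu_\Gamma))$, although this is not required by the statement.

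The only slightly delicate part is the bookkeeping with the distributional operators, namely verifying that $\ker d^\ast\subseteq\mathcal{D}(\vec{\mathcal{L}}_\Gamma)$ so that elements of $\ker d^\ast$ are legitimate test functions, and then reading off from (\ref{E:vecLdist}) and (\ref{E:ddist}) that the condition $d^\ast\eta=0$ kills both terms on the right-hand side. Once this is set up, the remainder is a one-line conservation-law computation, so I do not expect a genuine obstacle.
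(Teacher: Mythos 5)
Your proposal is correct and follows essentially the same route as the paper: both arguments test the equation against elements of $\ker d^\ast\subset\mathcal{D}(\vec{\mathcal{L}}_\Gamma)$, observe that the condition $d^\ast v=0$ annihilates both right-hand side terms via (\ref{E:vecLdist}) and (\ref{E:ddist}), and conclude by integration in time that the $\ker d^\ast$-component of $u(t)$ equals $\eta_0$, with $h(t)$ then recovered through the isomorphism of Remark \ref{R:isoiso}. The only cosmetic difference is that you differentiate $t\mapsto\langle u(t),\eta\rangle_{L^2(X_\Gamma,\mu_\Gamma)}$ directly, whereas the paper first decomposes $u(t)=dh(t)+\eta(t)$ and differentiates the projection $\eta$; the content is identical.
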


\begin{proof}
For any $t\geq 0$ there exist $\eta(t)\in \ker d^\ast$, uniquely determined, and $h(t)\in W^{1,2}(X_\Gamma,\mu_\Gamma)$, unique up to an additive constant, such that $u(t)=dh(t)+\eta(t)$. Since by definition $t\mapsto u(t)$ is differentiable on $(0,+\infty)$ and continuous on $[0,+\infty)$, so is its orthogonal projection $\eta$ to $\ker d^\ast$ and therefore also $dh$, and by Remark \ref{R:isoiso} even $h$, seen as a function with values in $W^{1,2}(X_\Gamma,\mu_\Gamma)/\mathbb{R}$. For any $v\in \ker d^\ast\subset \mathcal{D}(\vec{\mathcal{L}}_\Gamma)$ and any fixed $t$ we have 
\[\left\langle \eta_t(t),v\right\rangle_{L^2(X_\Gamma,\mu_\Gamma)}=-\left\langle h_t(t),d^\ast v\right\rangle_{L^2(X_\Gamma,\mu_\Gamma)}-d^\ast u(t)(d^\ast v)-\frac12\int_{X_\Gamma} u^2(t)\:d^\ast v\:d\mu_\Gamma=0,\]
so that also $\left\langle \eta(t)-\eta_0,v\right\rangle_{L^2(X_\Gamma,\mu_\Gamma)}=\int_0^t\left\langle \eta_\tau(\tau),v\right\rangle_{L^2(X_\Gamma,\mu_\Gamma)}d\tau=0$. However, this implies that $\eta(t)-\eta_0\:\bot\: \ker d^\ast$, which means this difference must be zero in $L^2(X_\Gamma,\mu_\Gamma)$.
\end{proof}

The Cole-Hopf transform (\ref{E:defh}) and (\ref{E:ColeHopfsol}) guarantees the existence and uniqueness of solution fields for initial conditions of gradient type.
\begin{theorem}\label{T:exmetric}
Assume that $u_0=dh_0$ with $h_0\in W^{1,2}(X_\Gamma,\mu_\Gamma)$. Let $w(t)$ denote the unique solution $e^{t\mathcal{L}_\Gamma}w_0$ to (\ref{E:Kirchhoffheat}) with initial condition $w_0:=e^{-h_0/2}$. Then the function $u(t):=-2d\log w(t)$, $t\geq 0$,
is the unique solution to (\ref{E:vectorCauchy}). 
\end{theorem}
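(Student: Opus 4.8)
The plan is to establish the two assertions---that $u(t):=-2d\log w(t)$ is \emph{a} solution of (\ref{E:vectorCauchy}), and that it is the \emph{only} one---separately, in both cases using the Cole--Hopf transform (\ref{E:defh})--(\ref{E:ColeHopfsol}) together with Theorem \ref{T:metricstructure}. For existence, note first that $h_0\in W^{1,2}(X_\Gamma,\mu_\Gamma)\subset C(X_\Gamma)\cap L^\infty(X_\Gamma,\mu_\Gamma)$ by (\ref{E:Linftybound}) and, using that $W^{1,2}(X_\Gamma,\mu_\Gamma)$ is an algebra admitting the chain rule with smooth functions, that $w_0:=e^{-h_0/2}$ is in $W^{1,2}(X_\Gamma,\mu_\Gamma)$, continuous, bounded, and bounded below by a positive constant. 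Since $\mathcal{L}_\Gamma$ is self-adjoint and nonpositive, $(e^{t\mathcal{L}_\Gamma})_{t\ge 0}$ is analytic and restricts to a strongly continuous semigroup on $W^{1,2}(X_\Gamma,\mu_\Gamma)$, so $w(t)=e^{t\mathcal{L}_\Gamma}w_0$ is in $C([0,+\infty),W^{1,2}(X_\Gamma,\mu_\Gamma))\cap C^1((0,+\infty),W^{1,2}(X_\Gamma,\mu_\Gamma))$; conservativeness and connectedness give $w(t)>0$ on $X_\Gamma$ for $t>0$, and since $w(t)\to w_0$ in $W^{1,2}(X_\Gamma,\mu_\Gamma)\hookrightarrow C(X_\Gamma)$ the function $(t,x)\mapsto w(t)(x)$ is bounded below by a positive constant on each strip $[0,T]\times X_\Gamma$. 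Hence $h(t):=-2\log w(t)$ lies in $C([0,+\infty),W^{1,2}(X_\Gamma,\mu_\Gamma))\cap C^1((0,+\infty),W^{1,2}(X_\Gamma,\mu_\Gamma))$ with $h(0)=h_0$, and by the computation preceding Theorem \ref{T:metricstructure} it satisfies the potential Burgers equation (\ref{E:metricKPZ}). Setting $u(t):=dh(t)$, boundedness of $d$ gives the regularity in Definition \ref{D:solmetric} and $u(0)=dh_0=u_0$; and for $v\in\mathcal{D}(\vec{\mathcal{L}}_\Gamma)$ one inserts $\varphi:=d^\ast v\in W^{1,2}(X_\Gamma,\mu_\Gamma)$ into (\ref{E:metricKPZ}) and rewrites the three terms as $u_t(t)(v)$, $\vec{\mathcal{L}}_\Gamma u(t)(v)$ and $\tfrac12 d(u^2)(t)(v)$ via (\ref{E:Kirchoffenergy}), (\ref{E:adjointdist}), (\ref{E:vecLdist}) and (\ref{E:ddist}) (using $u^2(t)\in L^1(X_\Gamma,\mu_\Gamma)$); this is exactly the first identity in (\ref{E:vectorCauchy}).

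For uniqueness, let $u$ be any solution with $u(0)=u_0=dh_0$. Since $u_0\in\im d$, Theorem \ref{T:metricstructure} (with $\eta_0=0$) gives $u(t)=dh(t)$, $t\ge 0$. I fix the representative $h$ normalized by $\langle h(t),\mathbf 1\rangle_{L^2(X_\Gamma,\mu_\Gamma)}=\langle h_0,\mathbf 1\rangle_{L^2(X_\Gamma,\mu_\Gamma)}$ for all $t$; then $h(t)-h_0$ lies in the complement $V_0$ of the constants, on which $d$ is bounded below by a Poincaré inequality (as $\mathcal{E}_\Gamma(f)=0$ forces $f$ constant), so $h\in C([0,+\infty),W^{1,2}(X_\Gamma,\mu_\Gamma))\cap C^1((0,+\infty),W^{1,2}(X_\Gamma,\mu_\Gamma))$ with $h(0)=h_0$ and $\langle h_t(t),\mathbf 1\rangle_{L^2(X_\Gamma,\mu_\Gamma)}=0$. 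Reversing the identities of the existence step, for every $v\in\mathcal{D}(\vec{\mathcal{L}}_\Gamma)$,
\[\langle h_t(t),d^\ast v\rangle_{L^2(X_\Gamma,\mu_\Gamma)}=\mathcal{L}_\Gamma h(t)(d^\ast v)-\tfrac12\left\langle dh(t),dh(t)\right\rangle(d^\ast v).\]
Thus the linear functional $\varphi\mapsto\langle h_t(t),\varphi\rangle_{L^2(X_\Gamma,\mu_\Gamma)}-\mathcal{L}_\Gamma h(t)(\varphi)+\tfrac12\left\langle dh(t),dh(t)\right\rangle(\varphi)$ on $W^{1,2}(X_\Gamma,\mu_\Gamma)$ vanishes on $\{d^\ast v:v\in\mathcal{D}(\vec{\mathcal{L}}_\Gamma)\}$, which by (\ref{E:decomptestfcts}) and $\langle d^\ast v,\mathbf 1\rangle_{L^2(X_\Gamma,\mu_\Gamma)}=0$ is precisely the kernel of $\varphi\mapsto\langle\varphi,\mathbf 1\rangle_{L^2(X_\Gamma,\mu_\Gamma)}$; hence it equals $\lambda(t)\langle\,\cdot\,,\mathbf 1\rangle_{L^2(X_\Gamma,\mu_\Gamma)}$ with $\lambda(t)=\mathcal{E}_\Gamma(h(t))/(2\mu_\Gamma(X_\Gamma))$, continuous on $(0,+\infty)$ with a finite limit as $t\to 0^+$.

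Now put $\hat h(t):=h(t)-\big(\int_0^t\lambda(\tau)\,d\tau\big)\mathbf 1$; then $d\hat h=dh=u$, $\hat h(0)=h_0$, and $\hat h$ satisfies (\ref{E:metricKPZ}) \emph{exactly}. Set $w(t):=e^{-\hat h(t)/2}$, so $w\in C([0,+\infty),W^{1,2}(X_\Gamma,\mu_\Gamma))\cap C^1((0,+\infty),W^{1,2}(X_\Gamma,\mu_\Gamma))$, $w(0)=w_0$, $w>0$, and $d\hat h=-2w^{-1}dw$. Inserting $\varphi:=w(t)\psi$ with $\psi\in W^{1,2}(X_\Gamma,\mu_\Gamma)$ into (\ref{E:metricKPZ}), using the Leibniz rule and collecting terms---the $\int_{X_\Gamma}w^{-1}|dw|^2\psi\,d\mu_\Gamma$ contributions cancel---gives $\langle w_t(t),\psi\rangle_{L^2(X_\Gamma,\mu_\Gamma)}=-\mathcal{E}_\Gamma(w(t),\psi)$ for all $\psi$, i.e. $w$ is a weak solution of (\ref{E:Kirchhoffheat}) with initial datum $w_0$. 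The energy estimate $\tfrac12\tfrac{d}{dt}\|w(t)-e^{t\mathcal{L}_\Gamma}w_0\|_{L^2(X_\Gamma,\mu_\Gamma)}^2=-\mathcal{E}_\Gamma(w(t)-e^{t\mathcal{L}_\Gamma}w_0)\le 0$ then forces $w(t)=e^{t\mathcal{L}_\Gamma}w_0$, whence $\hat h(t)=-2\log e^{t\mathcal{L}_\Gamma}w_0$ and $u(t)=d\hat h(t)=-2d\log w(t)$; so $u$ agrees with the function constructed above.

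The main obstacle is twofold. The first, routine but genuinely needed, is the regularity and positivity bookkeeping for $w$, $h$ and $\hat h$ (membership in $W^{1,2}(X_\Gamma,\mu_\Gamma)$, continuity and differentiability in $t$, and a lower bound on $w$ uniform in $t$ on compact intervals), which is what legitimizes $\log w$ and the chain rule. The second, conceptually more delicate, is the gauge fixing in the uniqueness argument: a solution $u$ determines the potential $h$ only up to a \emph{time-dependent} additive constant, and for a bad choice the equation satisfied by $w=e^{-h/2}$ picks up a spurious zeroth-order term $-\tfrac12\lambda(t)w$; the crucial point is that $\lambda$ depends on $t$ only, so this term is killed by a purely multiplicative rescaling of $w$ in $t$, after which uniqueness of weak solutions to the linear heat equation finishes the proof. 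Pinning down $\lambda$ through the codimension-one argument and checking its integrability near $t=0$ is the heart of the matter.
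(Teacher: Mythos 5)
Your proposal is correct and follows essentially the same route as the paper: Cole--Hopf in the forward direction for existence, and for uniqueness the combination of Theorem \ref{T:metricstructure}, a re-gauging of the potential by a time-dependent additive constant so that (\ref{E:metricKPZ}) holds against \emph{all} of $W^{1,2}(X_\Gamma,\mu_\Gamma)$, and reduction to uniqueness for the heat equation (\ref{E:Kirchhoffheat}); your $\lambda(t)$ is exactly the paper's correction $-g(t)$, obtained there by direct computation with the decomposition (\ref{E:decomptestfcts}) rather than by your annihilator argument. The only other differences are cosmetic: you verify that $e^{-\hat h/2}$ solves the heat equation weakly by testing with $\varphi=w\psi$ where the paper invokes the chain rule of \cite[Lemma 3.2]{HRT13}, and you close with an explicit energy estimate where the paper appeals to uniqueness of the $L^2$ Cauchy problem.
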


Both the existence and the uniqueness part follow well-known standard arguments, see for instance \cite{Bi03} or \cite[Section 8.4]{Olver14}. We adapt them to our setup.

\begin{proof} To verify that $u$ is a solution, let $h$ be as in (\ref{E:defh}). The stated hypotheses imply $u_t(t)=d h_t(t)$ in $L^2(X_\Gamma,\mu_\Gamma)$ for any $t>0$. We have $\left\langle d^\ast u(t), d^\ast v\right\rangle =\left\langle \mathcal{L}_\Gamma h(t), d^\ast v \right\rangle$ for test functions $\varphi=d^\ast v$ with $v\in\mathcal{D}(\vec{\mathcal{L}}_\Gamma)$. From (\ref{E:metricKPZ}) it follows that $u$ satisfies the first identity in (\ref{E:vectorCauchy}). To verify the continuity of $u$ at zero, note that by nonnegativity and conservativity of the semigroup we have $\inf_{s\in X_\Gamma} w(t,s)\geq e^{- \left\|h_0\right\|_{\sup}  /2}$ for any $t\geq 0$. Since the function $w:[0,+\infty)\to W^{1,2}(X_\Gamma,\mu_\Gamma)$ is continuous, also its reciprocal $w(\cdot)^{-1}:[0,+\infty)\to W^{1,2}(X_\Gamma,\mu_\Gamma)$ is continuous. Therefore 
\begin{align}
\left\|u(t)-u_0\right\|_{L^2(X_\Gamma,\mu_\Gamma)}&\leq 2\left\|(dw(t)-dw_0)w(t)^{-1}\right\|_{L^2(X_\Gamma,\mu_\Gamma)}+2\left\|(w(t)^{-1}-w_0^{-1})dw_0\right\|_{L^2(X_\Gamma,\mu_\Gamma)}\notag\\
&\leq 2e^{\left\|h_0\right\|_{\sup}   /2}\left\|w(t)-w_0\right\|_{W^{1,2}(X_\Gamma,\mu_\Gamma)}+2e^{\left\|h_0\right\|_{\sup}  /2}\left\|w(t)^{-1}-w_0^{-1}\right\|_{L^\infty(X_\Gamma,\mu_\Gamma)},\notag
\end{align}
what by (\ref{E:Linftybound}) converges to zero as $t$ goes to zero. 

To see uniqueness we may, by Theorem \ref{T:metricstructure}, assume that  $u(t)\in \im d$ for any $t\geq 0$. In this case there is a potential $\tilde{h}:[0,+\infty)\to W^{1,2}(X_\Gamma,\mu_\Gamma)$ such that $u(t)=d\tilde{h}(t)$ for all $t\geq 0$. According to Definition \ref{D:solmetric} $\tilde{h}$ is continuous on $[0,+\infty)$ and differentiable on $(0,+\infty)$, and we have (\ref{E:metricKPZ}) for $\tilde{h}$ in place of $h$ and  all test functions $\varphi$ of type $\varphi=d^\ast v$, $v\in\mathcal{D}(\vec{\mathcal{L}}_\Gamma)$. In order to have (\ref{E:metricKPZ}) for all test functions from $W^{1,2}(X_\Gamma,\mu_\Gamma)$, which also detect additive constants, we need to readjust the choice of the potential. For each $t\geq 0$ set now
\[g(t):=\frac{1}{\mu_{\Gamma}(X_\Gamma)}\left\lbrace -\big\langle \tilde{h}_t(t),\mathbf{1}\big\rangle_{L^2(X_\Gamma,\mu_\Gamma)}-\frac12 \big\langle d\tilde{h}(t),d\tilde{h}(t)\big\rangle_{L^2(X_\Gamma,\mu_\Gamma)}\right\rbrace\]
and let $G:[0,+\infty)\to\mathbb{R}$ be a differentiably function satisfying $G_t=g$. Then the readjusted potential $h(t):=\widetilde{h}(t)+G(t)$
satisfies (\ref{E:metricKPZ}) for all $\varphi\in W^{1,2}(X_\Gamma,\mu_\Gamma)$: Suppose the decomposition (\ref{E:decomptestfcts}) of $\varphi$ reads $\varphi=d^\ast v+c$ with $v\in\mathcal{D}(\vec{\mathcal{L}})$ and $c\in\mathbb{R}$, then
\begin{align}
\left\langle h_t(t),d^\ast v+c\right\rangle_{L^2(X_\Gamma,\mu_\Gamma)}&=\big\langle \widetilde{h}_t(t),d^\ast v+c\big\rangle_{L^2(X_\Gamma,\mu_\Gamma)}+\big\langle\widetilde{h}_t(t),c\big\rangle_{L^2(X_\Gamma,\mu_\Gamma)}+cg(t)\mu_\Gamma(X_\Gamma)\notag\\
&=\mathcal{L}\widetilde{h}(t)(d^\ast v)-\frac12\big\langle (d^\ast v+c)d\widetilde{h}(t),d\widetilde{h}(t)\big\rangle_{L^2(X_\Gamma,\mu_\Gamma)} \notag\\
&=\mathcal{L}_\Gamma h(t)(d^\ast v+c)-\frac12\big\langle (d^\ast v+c)dh(t),dh(t)\big\rangle_{L^2(X_\Gamma,\mu_\Gamma)},\notag
\end{align}
where we have used (\ref{E:distL}) and $\ker d=\mathbb{R}$. As a consequence, the continuous $W^{1,2}(X_\Gamma,\mu_\Gamma)$-valued  function 
\[w(t):=e^{-h(t)/2},\quad t\geq 0,\]
is the unique solution to the Cauchy problem for the heat equation (\ref{E:Kirchhoffheat}) in $L^2(X_\Gamma,\mu_\Gamma)$ with initial condition $w_0$. To see this, note that 
\begin{multline}
\mathcal{L}_\Gamma w(t)=d^\ast \left(-\frac12\:e^{-h(t)/2}dh(t)\right)\notag\\
=-\frac12\:e^{-h(t)/2}\left(\mathcal{L}_\Gamma h(t)-\frac12\left\langle dh(t),dh(t)\right\rangle \right)=-\frac12\:e^{-h(t)/2}h_t(t) =w_t(t)\notag
\end{multline}
in $(W^{1,2}(X_\Gamma,\mu_\Gamma))^\ast$, which follows from \cite[Lemma 3.2]{HRT13}. However, since $w_t(t)$ is in $L^2(X_\Gamma,\mu_\Gamma)$, also $\mathcal{L}_\Gamma w(t)$ must be in $L^2(X_\Gamma,\mu_\Gamma)$, and since $W^{1,2}(X_\Gamma,\mu_\Gamma)$ is dense in $L^2(X_\Gamma,\mu_\Gamma)$ the equality must hold in $L^2(X_\Gamma,\mu_\Gamma)$. If now $\overline{u}$ was another solution of (\ref{E:vectorCauchy}) with initial condition $u_0$ different from $u$ and having a potential $\overline{h}$ satisfying (\ref{E:metricKPZ}) for all $\varphi\in W^{1,2}(X_\Gamma,\mu_\Gamma)$ then $h$ and $\overline{h}$ would have to differ on $(0,+\infty)$ by a nonconstant function. However, this would lead to two different solutions $w$ and $\overline{w}$ of the Cauchy problem (\ref{E:Kirchhoffheat}), a contradiction.
\end{proof}
The following is immediate from \cite[Proposition 5.1]{IRT12}.
\begin{corollary}
If $\Gamma$ has no cycles, i.e. is a tree, then for any initial condition $u_0\in L^2(X_\Gamma,\mu_\Gamma)$ the problem (\ref{E:vectorCauchy}) has a unique solution. 
\end{corollary}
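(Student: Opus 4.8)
The plan is to reduce the statement directly to Theorem~\ref{T:exmetric} by observing that on a tree \emph{every} element of $L^2(X_\Gamma,\mu_\Gamma)$ is a gradient field, so that the gradient-type hypothesis of that theorem is automatically satisfied.

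First I would recall the two facts recorded just before Theorem~\ref{T:metricstructure}: that $\im d$ is a \emph{closed} subspace of $L^2(X_\Gamma,\mu_\Gamma)$, and that $L^2(X_\Gamma,\mu_\Gamma)=\im d\oplus\ker d^\ast$. Then I would invoke \cite[Proposition 5.1]{IRT12}, according to which the hypothesis that $\Gamma$ has no cycles is equivalent to $\ker d^\ast=\{0\}$. Combining these, $\im d=(\ker d^\ast)^\perp=L^2(X_\Gamma,\mu_\Gamma)$, so an arbitrary prescribed initial condition $u_0\in L^2(X_\Gamma,\mu_\Gamma)$ is of the form $u_0=dh_0$ for some $h_0\in W^{1,2}(X_\Gamma,\mu_\Gamma)$ — this holds simply by definition of the range of the operator $d$ with domain $W^{1,2}(X_\Gamma,\mu_\Gamma)$. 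At that point Theorem~\ref{T:exmetric} applies as stated and yields both existence and uniqueness of a solution to (\ref{E:vectorCauchy}) with this initial datum, namely $u(t)=-2d\log w(t)$ with $w(t)=e^{t\mathcal{L}_\Gamma}e^{-h_0/2}$.

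I do not expect any genuine obstacle here, which is precisely why the statement is flagged as ``immediate''. The only minor points worth making explicit are that the closedness of $\im d$ (already available) is what upgrades $\overline{\im d}=(\ker d^\ast)^\perp$ to the honest equality $\im d=(\ker d^\ast)^\perp$, and that the potential $h_0$ can be taken in the form domain $W^{1,2}(X_\Gamma,\mu_\Gamma)$, which is automatic from the definition of $d$ (uniqueness of $h_0$ up to an additive constant, via $\ker d=\mathbb{R}$ and Remark~\ref{R:isoiso}, is not even needed for the conclusion). No further estimates or constructions are required.
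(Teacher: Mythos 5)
Your argument is correct and is exactly the route the paper intends: the corollary is stated as immediate from \cite[Proposition 5.1]{IRT12}, i.e.\ triviality of $\ker d^\ast$ for trees combined with the closedness of $\im d$ and the decomposition $L^2(X_\Gamma,\mu_\Gamma)=\im d\oplus\ker d^\ast$ forces every $u_0$ to be a gradient $dh_0$ with $h_0\in W^{1,2}(X_\Gamma,\mu_\Gamma)$, after which Theorem~\ref{T:exmetric} applies verbatim. No gaps.
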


We provide some rudimentary estimates.
\begin{corollary}\label{C:rudiestmetric}
Let $u_0$, $h_0$ and $u$ be as in Theorem \ref{T:exmetric}. Assume in addition that $h_0(s_0)=0$ for some $s_0\in X_{\Gamma}$. 
\begin{enumerate}
\item[(i)] We have $\sup_{t>0}\left\|u(t)\right\|_{L^2(X_\Gamma,\mu_\Gamma)}\leq c_1\left\|u_0\right\|_{L^2(X_\Gamma,\mu_\Gamma)}e^{c_2\left\|u_0\right\|_{L^2(X_\Gamma,\mu_\Gamma)}}$ with positive constants $c_1$ and $c_2$ independent of $u_0$.
\item[(ii)] If $\widetilde{u}_0=d\widetilde{h}_0$ is another initial condition with $\widetilde{h}_0\in W^{1,2}(X_\Gamma,\mu_\Gamma)$ such that $\widetilde{h}_0(s_0)=0$, and $\widetilde{u}$ the corresponding solution, then 
\begin{multline}
\sup_{t>0} \left\|u(t)-\widetilde{u}(t)\right\|_{L^2(X_\Gamma,\mu_\Gamma)}\notag\\
\leq c_3(\left\|u_0\right\|_{L^2(X_\Gamma,\mu_\Gamma)}+1)^2e^{c_4(\left\|u_0\right\|_{L^2(X_\Gamma,\mu_\Gamma)}+\left\|\widetilde{u}_0\right\|_{L^2(X_\Gamma,\mu_\Gamma)})}\left\|u_0-\widetilde{u}_0\right\|_{L^2(X_\Gamma,\mu_\Gamma)}
\end{multline}
with positive constants $c_3$ and $c_4$ independent of $u_0$ and $\widetilde{u}_0$.
\end{enumerate}
\end{corollary}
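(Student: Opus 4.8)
The plan is to reduce both estimates to statements about the scalar heat semigroup $w(t)=e^{t\mathcal{L}_\Gamma}w_0$ via the Cole--Hopf transform $u(t)=-2d\log w(t)$, $w_0=e^{-h_0/2}$, using the $L^\infty$--$W^{1,2}$ bound (\ref{E:Linftybound}), the resistance-type estimate (\ref{E:resistonedge}), and the contractivity of $(e^{t\mathcal{L}_\Gamma})_{t\geq 0}$ on $L^\infty$ and $L^2$. First I would exploit the normalization $h_0(s_0)=0$: combined with (\ref{E:Linftybound}) (or more directly with the resistance form inequality, since $X_\Gamma$ is compact and $\mathcal{E}_\Gamma(h_0)=\left\|u_0\right\|_{L^2}^2$) this gives $\left\|h_0\right\|_{\sup}\leq C\left\|u_0\right\|_{L^2(X_\Gamma,\mu_\Gamma)}$ with $C$ depending only on $\Gamma$ and $\mu_\Gamma$. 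Hence $e^{-C\left\|u_0\right\|_{L^2}/2}\leq w_0\leq e^{C\left\|u_0\right\|_{L^2}/2}$ pointwise, and by $L^\infty$-contractivity and conservativity of the semigroup the same two-sided bound holds for $w(t)$ uniformly in $t\geq 0$. In particular $\left\|w(t)^{-1}\right\|_{\sup}\leq e^{C\left\|u_0\right\|_{L^2}/2}$ for all $t$.

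For part (i), write $u(t)=-2\,(dw(t))\,w(t)^{-1}$ and estimate in $L^2$ by pulling out $\left\|w(t)^{-1}\right\|_{\sup}$; it remains to bound $\left\|dw(t)\right\|_{L^2(X_\Gamma,\mu_\Gamma)}=\mathcal{E}_\Gamma(w(t))^{1/2}$. Since $\mathcal{E}_\Gamma$ is the Dirichlet form of $\mathcal{L}_\Gamma$, the energy is nonincreasing along the heat flow, so $\mathcal{E}_\Gamma(w(t))^{1/2}\leq \mathcal{E}_\Gamma(w_0)^{1/2}$. Now $dw_0=d(e^{-h_0/2})=-\tfrac12 e^{-h_0/2}dh_0=-\tfrac12 e^{-h_0/2}u_0$ by the chain rule (\cite[Lemma 3.2]{HRT13}), so $\mathcal{E}_\Gamma(w_0)^{1/2}=\left\|dw_0\right\|_{L^2}\leq \tfrac12 \left\|w_0\right\|_{\sup}\left\|u_0\right\|_{L^2}\leq \tfrac12 e^{C\left\|u_0\right\|_{L^2}/2}\left\|u_0\right\|_{L^2}$. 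Combining the three bounds yields $\sup_{t>0}\left\|u(t)\right\|_{L^2}\leq \left\|u_0\right\|_{L^2}e^{C\left\|u_0\right\|_{L^2}}$, which is the claim with $c_1=1$, $c_2=C$ (one may absorb the factor $1$ harmlessly).

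For part (ii), let $\widetilde w(t)=e^{t\mathcal{L}_\Gamma}\widetilde w_0$ with $\widetilde w_0=e^{-\widetilde h_0/2}$, so that $u-\widetilde u=-2(dw\,w^{-1}-d\widetilde w\,\widetilde w^{-1})$. Split as in the continuity argument in the proof of Theorem~\ref{T:exmetric}:
\begin{equation}
u(t)-\widetilde u(t)=-2(dw(t)-d\widetilde w(t))w(t)^{-1}-2\,d\widetilde w(t)\,(w(t)^{-1}-\widetilde w(t)^{-1}).\notag
\end{equation}
The first term is bounded in $L^2$ by $2\left\|w(t)^{-1}\right\|_{\sup}\mathcal{E}_\Gamma(w(t)-\widetilde w(t))^{1/2}\leq 2e^{C\left\|u_0\right\|_{L^2}/2}\mathcal{E}_\Gamma(w_0-\widetilde w_0)^{1/2}$ (energy decay of the difference, since $w-\widetilde w$ also solves the heat equation). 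For the second term use $\left\|d\widetilde w(t)\right\|_{L^2}\leq\mathcal{E}_\Gamma(\widetilde w_0)^{1/2}\leq\tfrac12 e^{C\left\|\widetilde u_0\right\|_{L^2}/2}\left\|\widetilde u_0\right\|_{L^2}$ as in (i), together with $\left\|w(t)^{-1}-\widetilde w(t)^{-1}\right\|_{\sup}\leq \left\|w(t)^{-1}\right\|_{\sup}\left\|\widetilde w(t)^{-1}\right\|_{\sup}\left\|w(t)-\widetilde w(t)\right\|_{\sup}$ and $\left\|w(t)-\widetilde w(t)\right\|_{\sup}\leq c\left\|w(t)-\widetilde w(t)\right\|_{W^{1,2}}\leq c(\mathcal{E}_\Gamma(w_0-\widetilde w_0)^{1/2}+\left\|w_0-\widetilde w_0\right\|_{L^2})$ by (\ref{E:Linftybound}) and $L^2$-contractivity. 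Finally reduce $\mathcal{E}_\Gamma(w_0-\widetilde w_0)^{1/2}$ and $\left\|w_0-\widetilde w_0\right\|_{L^2}$ to $\left\|u_0-\widetilde u_0\right\|_{L^2}$ and $\left\|h_0-\widetilde h_0\right\|_{\sup}$: writing $w_0-\widetilde w_0=e^{-h_0/2}-e^{-\widetilde h_0/2}$ and using the Leibniz and chain rules one gets $d(w_0-\widetilde w_0)=-\tfrac12 e^{-h_0/2}(u_0-\widetilde u_0)-\tfrac12(e^{-h_0/2}-e^{-\widetilde h_0/2})\widetilde u_0$, whence $\mathcal{E}_\Gamma(w_0-\widetilde w_0)^{1/2}$ is controlled by $e^{C\left\|u_0\right\|_{L^2}/2}\left\|u_0-\widetilde u_0\right\|_{L^2}+\left\|h_0-\widetilde h_0\right\|_{\sup}\left\|\widetilde u_0\right\|_{L^2}$ (using $|e^{-a}-e^{-b}|\leq \tfrac12(e^{|a|}+e^{|b|})|a-b|$ and $\left\|e^{-h_0/2}-e^{-\widetilde h_0/2}\right\|_{\sup}\lesssim e^{C(\left\|u_0\right\|_{L^2}+\left\|\widetilde u_0\right\|_{L^2})/2}\left\|h_0-\widetilde h_0\right\|_{\sup}$); similarly $\left\|w_0-\widetilde w_0\right\|_{L^2}\lesssim e^{C(\left\|u_0\right\|_{L^2}+\left\|\widetilde u_0\right\|_{L^2})/2}\left\|h_0-\widetilde h_0\right\|_{\sup}$. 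Since $h_0-\widetilde h_0$ vanishes at $s_0$, the resistance estimate (or (\ref{E:Linftybound})) gives $\left\|h_0-\widetilde h_0\right\|_{\sup}\leq C\mathcal{E}_\Gamma(h_0-\widetilde h_0)^{1/2}=C\left\|u_0-\widetilde u_0\right\|_{L^2}$. Collecting all the pieces and bounding every exponential by $e^{c_4(\left\|u_0\right\|_{L^2}+\left\|\widetilde u_0\right\|_{L^2})}$ and every polynomial factor by $c_3(\left\|u_0\right\|_{L^2}+1)^2$ yields the asserted inequality.

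The only genuinely delicate point is the bookkeeping in part (ii): one must be careful that the factor multiplying $\left\|u_0-\widetilde u_0\right\|_{L^2}$ is genuinely of the stated form $c_3(\left\|u_0\right\|_{L^2}+1)^2 e^{c_4(\left\|u_0\right\|_{L^2}+\left\|\widetilde u_0\right\|_{L^2})}$ — in particular that the $\left\|\widetilde u_0\right\|_{L^2}$ dependence outside the exponential can be absorbed (it enters only through products like $\left\|\widetilde u_0\right\|_{L^2}\cdot\left\|h_0-\widetilde h_0\right\|_{\sup}\lesssim \left\|\widetilde u_0\right\|_{L^2}\left\|u_0-\widetilde u_0\right\|_{L^2}$, and $\left\|\widetilde u_0\right\|_{L^2}\leq (\left\|u_0\right\|_{L^2}+1) + \left\|u_0-\widetilde u_0\right\|_{L^2}$, so after possibly enlarging $c_3,c_4$ the claimed form is reached; here one tacitly uses that if $\left\|u_0-\widetilde u_0\right\|_{L^2}$ is large the estimate is trivial from (i)). Everything else is a routine application of the chain/Leibniz rules for the resistance form (\cite[Lemma 3.2]{HRT13}), the $L^\infty$- and $L^2$-contractivity and conservativity of the Kirchhoff heat semigroup, the energy-dissipation inequality $\mathcal{E}_\Gamma(e^{t\mathcal{L}_\Gamma}f)\leq\mathcal{E}_\Gamma(f)$, and the Sobolev embedding (\ref{E:Linftybound}).
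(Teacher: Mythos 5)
Your proof is correct, and parts of it are marginally cleaner than the paper's own (for (i) you use energy decay $\mathcal{E}_\Gamma(e^{t\mathcal{L}_\Gamma}f)\leq\mathcal{E}_\Gamma(f)$ directly, where the paper passes through $W^{1,2}$-contractivity). The overall strategy coincides with the paper's: Cole--Hopf, the normalization $h_0(s_0)=0$ together with the resistance estimate to get $\left\|h_0\right\|_{\sup}\leq c\left\|u_0\right\|_{L^2(X_\Gamma,\mu_\Gamma)}$, and the conservativity lower bound $\inf w(t)\geq e^{-\left\|h_0\right\|_{\sup}/2}$. The one real difference is in (ii): the paper disposes of the nonlinear compositions in three lines by citing the estimate (\ref{E:compest}) from \cite[Proposition 3.1]{HZ12}, applied with $F=\log$ (to pass from $\log w-\log\widetilde w$ to $w-\widetilde w$) and with $F=\exp$ (to pass from $w_0-\widetilde w_0$ to $h_0-\widetilde h_0$), whereas you re-derive the needed Lipschitz bounds by hand via the quotient/product decomposition. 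Your route is more self-contained but pays a price: (\ref{E:compest}) is asymmetric, with the polynomial factor $(\left\|f\right\|_{W^{1,2}}+1)$ depending only on the first argument, so the paper's prefactor automatically comes out as $(\left\|u_0\right\|_{L^2}+1)^2$; your symmetric hand estimates produce stray polynomial factors of $\left\|\widetilde u_0\right\|_{L^2}$, which you correctly absorb by the dichotomy on whether $\left\|u_0-\widetilde u_0\right\|_{L^2}\leq 1$ (falling back on (i) otherwise) --- an extra bookkeeping step the paper's argument does not need.
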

The proof relies on standard arguments, it is briefly sketched in the appendix.

\section{Heat and Burgers equations on the Sierpinski gasket}\label{S:SG}

In this section we present two formulations of Burgers equation on the Sierpinski gasket, they correspond to (\ref{E:scalarBurgers}) and (\ref{E:vectorBurgers}) in a similar way as (\ref{E:scalarCauchy}) and (\ref{E:vectorCauchy}). We then analyze the formulation of (\ref{E:vectorBurgers}) in more detail.

\subsection{Preliminaries on the Sierpinski gasket}

To provide some preliminaries we follow \cite{Ki01} and \cite{Str06}. The Sierpinski gasket $K$ is defined as the unique self-similar set determined by the contractive similarities $F_i:\mathbb{R}^2\to\mathbb{R}^2$ given by $F_i(x)=\frac12(x-q_i)+q_i$, $i=0,1,2$, where the $q_i$ 
are the vertices of an equilateral triangle of side length $1$. Let $V_0=\left\lbrace q_0,q_1,q_2\right\rbrace$. Given a word $w=w_1w_2...w_m$ of length $|w|=m$ with $w_i\in \left\lbrace 0,1,2\right\rbrace$ we write $F_w=F_{w_1}\circ F_{w_2}\circ ...\circ F_{w_m}$ and use the notations $K_w:=F_w(K)$ and $V_m:=\cup_{|w|=m} F_wV_0$ and $V_\ast:=\cup_{m\geq 0} V_m$.

For each $m\geq 0$ we consider $V_m$ as the vertex set of a (discrete) graph $G_m=(V_m,E_m)$ with vertices $p,q\in V_m$ being the endpoints of an edge $e\in E_m$ connecting them if there is a word $w$ of length $|w|=m$ such that $p,q\in F_w V_0$. In this case we write $p\sim_m q$. One can define a nontrivial quadratic form $\mathcal{E}$ acting on functions $f:V_\ast\to\mathbb{R}$ as the limit 
\[\mathcal{E}(f):=\lim_{m\to\infty}\mathcal{E}_m(f)\]
of rescaled graph energies 
\[\mathcal{E}_m(f)=r^{-m}\sum_{p\in V_m}\sum_{q\sim_m p}(f(p)-f(q))^2\]
along this sequence $(G_m)_{m\geq 0}$ of graphs. The rescaling factor is given by $r:=\frac35$. On $V_\ast$ we can define an associated metric by $R(p,q):=\sup\left\lbrace |f(p)-f(q)|^2:\: f:V_\ast\to\mathbb{R}, \mathcal{E}(f)<+\infty\right\rbrace$ for $p,q\in V_\ast$, and then recover $K$ as the completion of $V_\ast$ in this metric. The space $K$ endowed with the unique continuation of this metric, again denoted by $R$, is compact. Each function $f$ on $V_\ast$ with $\mathcal{E}(f)<+\infty$ extends to a continuous function on $K$, again denoted by $f$. Defining $\mathcal{D}(\mathcal{E})$ to be the set of functions $f:K\to\mathbb{R}$ such that $\mathcal{E}(f):=\mathcal{E}(f|_{V_\ast})<+\infty$, we obtain a resistance form $(\mathcal{E},\mathcal{D}(\mathcal{E}))$ on $K$ in the sense of \cite[Definition 2.8]{Ki03}. The elements of $\mathcal{D}(\mathcal{E})$ are continuous on $K$, hence bounded, and $\mathcal{D}(\mathcal{E})$ is an algebra with pointwise multiplication. Moreover, for any fixed $m$ we have
\[\mathcal{E}(f)=\sum_{|w|=m} \mathcal{E}_{K_w}(f),\quad f\in\mathcal{D}(\mathcal{E}),\]
where $\mathcal{E}_{K_w}$ denotes the restriction of $\mathcal{E}$ to $K_w$, see for instance \cite[Proposition 3.6]{PS17}. In the present context we have $\mathcal{E}_{K_w}(f)=r^{-m}\mathcal{E}(f\circ F_w)$, $f\in\mathcal{D}(\mathcal{E})$. For a fixed word $w$ of length $|w|=m$ the form $\mathcal{E}_{K_w}$ satisfies
\begin{equation}\label{E:resistKw}
(f(x)-f(y))^2\leq r^m\:\mathcal{E}_{K_w}(f)
\end{equation}
for any $f\in\mathcal{D}(\mathcal{E})$ and any $x,y\in K_w$.
 
Basically following \cite{CS03, CS09, IRT12} we can introduce a first order derivation $\partial$ associated with $(\mathcal{E},\mathcal{D}(\mathcal{E}))$.  Let $l_a(K\times K)$ denote the space of all real valued antisymmetric functions on $K\times K$. Given $v\in l_a(K\times K)$ and $g\in\mathcal{D}(\mathcal{E})$ we can define a new element $gv$ of $l_a(K\times K)$ by 
\begin{equation}\label{E:action}
(gv)(x,y):=\overline{g}(x,y)v(x,y),\quad x,y\in K, 
\end{equation}
where $\overline{g}(x,y):=\frac12(g(x)+g(y))$. This defines an action of $\mathcal{D}(\mathcal{E})$ on $l_a(K\times K)$ making it into a $\mathcal{D}(\mathcal{E})$-module. Next, let $d_u:\mathcal{D}(\mathcal{E})\to l_a(K\times K)$ be the universal derivation defined by 
\begin{equation}\label{E:universalder}
d_uf(x,y):=f(x)-f(y), \quad x,y\in K.
\end{equation}
It satisfies $\left\|d_uf\right\|_{\mathcal{H}}^2=\mathcal{E}(f)$ for any $f\in\mathcal{D}(\mathcal{E})$, and by (\ref{E:action}) also $d_u(fg)=fd_ug+gd_uf$ for any $f,g\in\mathcal{D}(\mathcal{E})$. 
Now let $\Omega_a^1(K)$ denote the submodule of $l_q(K\times K)$ generated by the functions of form $gd_uf$. On $\Omega_a^1(K)$ we can introduce a symmetric nonnegative definite bilinear form $\left\langle\cdot,\cdot\right\rangle_{\mathcal{H}}$ by extending 
\[\left\langle g_1d_uf_1, g_2d_uf_2\right\rangle_{\mathcal{H}}:=\lim_{m\to\infty} r^{-m}\sum_{p\in V_m}\sum_{q\sim_m p} \overline{g_1}(p,q)\overline{g_2}(p,q)d_uf_1(p,q)d_uf_2(p,q)\]
linearly in both arguments, respectively. Factoring out zero seminorm elements and completing yields a Hilbert space $\mathcal{H}$ to which we refer as the \emph{space of generalized $L^2$-vector fields associated with $(\mathcal{E},\mathcal{D}(\mathcal{E}))$}. 

\begin{remark}
The elements $v$ of $\mathcal{H}$ can no longer be interpreted as functions, in fact, for classical setups such as Euclidean spaces or Riemannian manifolds the space $\mathcal{H}$, defined in different but equivalent way, is the space of square integrable vector fields, see \cite{CS03, HRT13} or \cite{HT-fgs5}.
\end{remark}

The action (\ref{E:action}) induces an action of $\mathcal{D}(\mathcal{E})$ on $\mathcal{H}$ which satisfies 
\begin{equation}\label{E:boundedaction}
\left\|gv\right\|_{\mathcal{H}}\leq \left\|g\right\|_{\sup}\left\|v\right\|_{\mathcal{H}}
\end{equation}
for all $g\in\mathcal{D}(\mathcal{E})$ and all $v\in\mathcal{H}$. Given $f\in\mathcal{D}(\mathcal{E})$, we denote the $\mathcal{H}$-equivalence class of the universal derivation $d_uf$ as in (\ref{E:universalder}) by $\partial f$. This defines a derivation operator 
\[\partial:\mathcal{D}(\mathcal{E})\to\mathcal{H}\] 
that satisfies $\left\|\partial f\right\|_{\mathcal{H}}^2=\mathcal{E}(f)$ for any $f\in\mathcal{D}(\mathcal{E})$ and $\partial(fg)=f\partial g+g\partial f$ for any $f,g\in \mathcal{D}(\mathcal{E})$.

\begin{remark}\label{R:isoisoSG}
Again we have $\ker \partial =\mathbb{R}$, and the spaces $\im \partial$ and $\mathcal{D}(\mathcal{E})/\mathbb{R}$ are isometrically isomorphic.
\end{remark}

\begin{remark}
For Euclidean domains or Riemannian manifolds the operator $\partial$, defined in a different but equivalent way, yields the usual gradient operator, see  \cite{CS03, HRT13, HT-fgs5}. For a compact metric graph $\Gamma$ as in the preceding section one can similarly define a Hilbert space $\mathcal{H}_\Gamma$, then based on the resistance form $(\mathcal{E}_{\Gamma},\dot{W}^{1,2}(X_\Gamma))$. In this case there is an isometric isomorphism from $\mathcal{H}_\Gamma$ onto $L^2(X_\Gamma,\mu_\Gamma)$ which takes $\partial f$ into $f'$, \cite[Proposition 5.1]{BK16}.
\end{remark}

In what follows let $\mu$ be an atom free nonnegative Radon measure on $K$ with full support. 

\subsection{Scalar Laplacian}
The form $(\mathcal{E},\mathcal{D}(\mathcal{E}))$ is a local regular Dirichlet form on $L^2(K,\mu)$, see for instance \cite[Theorem 3.4.6]{Ki01} or \cite[Theorem 9.4]{Ki12}. The space $\mathcal{D}(\mathcal{E})$, endowed with
\begin{equation}\label{E:DformdomainSG}
\left\|f\right\|_{\mathcal{D}(\mathcal{E})}:=(\mathcal{E}(f)+\left\|f\right\|_{L^2(K,\mu)})^{1/2},
\end{equation}
is a Hilbert space, we write $\left\langle \cdot,\cdot\right\rangle_{\mathcal{D}(\mathcal{E})}$ for its scalar product.
The inequality
\begin{equation}\label{E:supboundSG}
\left\|f\right\|_{\sup}\leq c\:\left\|f\right\|_{\mathcal{D}(\mathcal{E})},\quad f\in\mathcal{D}(\mathcal{E}),
\end{equation}
holds with a universal constant $c>0$, \cite[Lemma 5.2.8]{Ki01}. The generator $(\mathcal{L},\mathcal{D}(\mathcal{L}))$ of this form is denoted by $(\mathcal{L},\mathcal{D}(\mathcal{L}))$. The derivation $\partial$ extends to a closed unbounded operator $\partial:L^2(K,\mu)\to \mathcal{H}$ with domain $\mathcal{D}(\mathcal{E})$. Its adjoint is denoted by $\partial^\ast$ and its domain by $\mathcal{D}(\partial^\ast)$. The image $\im \partial$ of the derivation $\partial$ is a closed subspace of $\mathcal{H}$ and we observe the orthogonal Helmholtz-Hodge type decomposition 
\begin{equation}\label{E:Helmholtz}
\mathcal{H}=\im \partial \oplus \ker \partial^\ast.
\end{equation}

\subsection{Vector Laplacian}
Similarly as in the metric graph case we can introduce a closed quadratic form $(\vec{\mathcal{E}},\mathcal{D}(\vec{\mathcal{E}}))$ on the Hilbert space $\mathcal{H}$ by setting $\mathcal{D}(\vec{\mathcal{E}}):=\mathcal{D}(\partial^\ast)$ and 
\[\vec{\mathcal{E}}(u,v):=\left\langle \partial^\ast u, \partial^\ast v\right\rangle_{L^2(K,\mu)},\quad u,v\in \mathcal{D}(\vec{\mathcal{E}}).\]
The associated generator is $(\vec{\mathcal{L}},\mathcal{D}(\vec{\mathcal{L}}))$, and $v\in \mathcal{H}$ is in $\mathcal{D}(\vec{\mathcal{L}})$ if and only if $\partial^\ast v\in \mathcal{D}(\mathcal{E})$. In this case, we have $\vec{\mathcal{L}}v=-\partial \partial^\ast v$. 

\subsection{Distributional definitions}

Similarly as before we consider $\partial^\ast$ and $\mathcal{L}$ also in the distributional sense as bounded linear operators $\partial^\ast: \mathcal{H}\to (\mathcal{D}(\mathcal{E}))^\ast$ and $\mathcal{L}:\mathcal{D}(\mathcal{E})\to(\mathcal{D}(\mathcal{E}))^\ast$ by $\partial^\ast v(\varphi):=\left\langle v,\partial\varphi\right\rangle_{\mathcal{H}}$ and $\mathcal{L}f(\varphi):=-\mathcal{E}(f,\varphi)$. Using the norm $v\mapsto \left\|\partial^\ast v\right\|_{\mathcal{D}(\mathcal{E})}$ on $\mathcal{D}(\vec{\mathcal{L}})$ we can see that the operator $\vec{\mathcal{L}}$ induces a bounded linear operator $\vec{\mathcal{L}}: L^2(K,\mu)\to (\mathcal{D}(\vec{\mathcal{L}}))^\ast$, defined by 
\[\vec{\mathcal{L}}v(w):=\partial^\ast v(\partial^\ast w),\quad w\in \mathcal{D}(\vec{\mathcal{L}}).\] 
We can generalize the former definition of the convection term $d(u^2)$ in (\ref{E:vectorCauchy}) by defining $\partial\left\langle u,u\right\rangle\in (\mathcal{D}(\vec{\mathcal{L}}))^\ast$ for any $u\in\mathcal{H}$ via
\begin{equation}\label{E:convect}
\partial \left\langle u,u\right\rangle(v):=\left\langle (\partial^\ast v)u,u\right\rangle_{\mathcal{H}},\quad v\in\mathcal{D}(\vec{\mathcal{L}}).
\end{equation}

\subsection{Hodge star operators and scalar Burgers equation}

The formulation of a counterpart of (\ref{E:scalarBurgers}) and (\ref{E:scalarCauchy}) on the Sierpinski gasket is non-trivial, note that a priori $\partial(g^2)$ is not a scalar function. However, since the Sierpinski gasket is one-dimensional in a certain way, \cite{Ku89, Hino10}, the gradient field $\partial(g^2)$ can be interpreted as function. We take a short detour to make this precise. Given a function $f\in\mathcal{D}(\mathcal{E})$ we can define its \emph{energy measure $\nu_f$} by the requirement that 
\[\int_K g\:d\nu_f=\mathcal{E}(fg,f)-\frac12\mathcal{E}(f^2,g), \quad g\in\mathcal{D}(\mathcal{E}),\]
see \cite[Section 3.2]{FOT94}. For any word $w$ of length $m$ it satisfies $\nu_f(K_w)=\mathcal{E}_{K_w}(f)$, what provides an alternative, equivalent definition, \cite{Str06}. Energy measures connect to the space $\mathcal{H}$ by the identity $\left\|g\partial f\right\|_{\mathcal{H}}^2=\int_K g^2\:d\nu_f$ for any $f,g\in\mathcal{D}(\mathcal{E})$.

The space of functions on $K$ harmonic on $K\setminus V_0$ is a Hilbert space with inner product $\mathcal{E}$, its dimension is two, \cite{Str06}. Let $\left\lbrace h_1,h_2\right\rbrace$ be an orthonormal basis in this space. The measure $\nu:=\nu_{h_1}+\nu_{h_2}$ does not depend on the choice of this orthonormal basis, and it is called the \emph{Kusuoka measure}. The energy measures $\nu_f$ of any function $f\in\mathcal{D}(\mathcal{E})$ is absolutely continuous with respect to $\nu$ and therefore has a density $\Gamma(f)\in L^1(K,\nu)$. As shown in \cite[Section 2]{HRT13} there exists a measurable field $(\mathcal{H}_x)_{x\in K}$ of Hilbert spaces $(\mathcal{H}_x, \left\|\cdot,\cdot\right\|_{\mathcal{H}_x})$ such that 
the space $\mathcal{H}$ is isometrically isomorphic to the direct integral $\int_K^\oplus \mathcal{H}_x\:\nu(dx)$ with respect to $\nu$. The Hilbert spaces $\mathcal{H}_x$ may be seen as abstract substitutes of tangent spaces, and in particular, we have $\left\|\partial f\right\|_{\mathcal{H}_x}^2=\Gamma(f)(x)$ for $\nu$-a.e. $x\in K$. A very well known observation, basically due to Kusuoka, \cite{Ku89}, and studied further in \cite{Hino03, Hino05, Hino10}, is that for $\nu$-a.e. $x\in K$ the space $\mathcal{H}_x$ is one-dimensional. 

\begin{remark}
To have this direct integral representation the given volume measure $\mu$ does not have to be the Kusuoka measure $\nu$. In fact, a standard choice of volume measure is to take the natural (normalized) self-similar Hausdorff measure of dimension $\frac{\log 3}{\log 2}$, and the Kusuoka measure is singular with respect to it, \cite{BBST, Hino03, Hino05}.
\end{remark}

It is not difficult to find an element $\omega$ of $\mathcal{H}$ that satisfies $\left\|\omega\right\|_{\mathcal{H}_x}=1$ for $\nu$-a.e. $x\in K$, \cite[Lemma 4.3]{BK16}. Consequently for any $v\in\mathcal{H}$ there is a uniquely defined function $g\in L^2(K,\nu)$ such that $v=g\omega$. It was shown in \cite[Section 4]{BK16} that the map $\star_\omega: \mathcal{H}\to L^2(K,\mu)$, defined by 
\[\star_\omega v:=g,\]
provides an isometric isomorphism from $\mathcal{H}$ onto $L^2(K,\nu)$, \cite[Proposition 4.5]{BK16}. To $\star_\omega$ we refer as the \emph{Hodge star operator associated with $\omega$}, \cite[Definition 4.4]{BK16}. In the following, let $\omega\in \mathcal{H}$ with $\left\|\omega\right\|_{\mathcal{H}_x}=1$ $\nu$-a.e. be fixed.

Mathematically it now seems reasonable to formulate (\ref{E:scalarBurgers}) as the Cauchy problem
\begin{equation}\label{E:scalarCauchySG}
\begin{cases} g_t(t)&=\mathcal{L}g(t)-\frac12\star_\omega \partial(g^2)(t),\\
g(0)&=g_0.\end{cases}
\end{equation}
In \cite{LiuQian17} the authors make heavy use of (\ref{E:resistKw}) respectively (\ref{E:supboundSG}) and skillfully establish Sobolev inequalities on $K$ for mutually singular measures. They combine them with known results on heat kernels to obtain the existence and uniqueness of weak solutions, \cite[Definition 4.13]{LiuQian17}, to a counterpart of (\ref{E:scalarCauchySG}) subject to Dirichlet boundary conditions and in the case that $\mu$ is the natural Hausdorff measure on $K$, \cite[Theorem 4.16]{LiuQian17}. Their results work for arbitrary finite time intervals $[0,T]$. Without mentioning it explicitely, they make use of a Hodge star operator $\star_\omega$. In fact, in a probabilistic form it already appeared in \cite[Theorem 5.4 (ii)]{Ku89} (for $p=1$), as can be seen using Nakao's theorem, see for example \cite[Theorem 9.1]{HRT13}. Under additional conditions also well known semigroup methods may be applied to obtain existence and uniqueness of solutions to (\ref{E:scalarCauchySG}), \cite[Section 6.3, Theorem 3.1]{P83}, at least for small time intervals.

\subsection{Vector Burgers equation}
Here we focus on (\ref{E:vectorBurgers}). Suppose that $w(t)=e^{t\mathcal{L}}w_0$ is the unique solution to the heat equation (\ref{E:Kirchhoffheat}), now for the generator $(\mathcal{L},\mathcal{D}(\mathcal{L}))$ of $(\mathcal{E},\mathcal{D}(\mathcal{E}))$. Again we assume the initial condition $w_0\in L^2(K,\mu)$ to be strictly nonnegative $\mu$-a.e. and strictly positive on a set of positive measure. The $\mathcal{D}(\mathcal{E})$-valued function $h:=-2\log w$ satisfies (\ref{E:metricKPZ}) for any $\varphi\in \mathcal{D}(\mathcal{E})$. We write $\partial\left\langle u,u\right\rangle(t):=\partial\left\langle u(t),u(t)\right\rangle$, the latter defined as in (\ref{E:convect}), and consider the Cauchy problem
\begin{equation}\label{E:vectorCauchySG}
\begin{cases}
u_t(t) & =\vec{\mathcal{L}}u(t)-\frac12\partial\left\langle u,u \right\rangle(t),\quad t>0,\\
u(0) & =u_0.
\end{cases}
\end{equation} 
The definition of solution is similar to the metric graph case.
\begin{definition}
A function $u\in C([0,+\infty), \mathcal{H})\cap C^1((0,+\infty),\mathcal{H})$ is called a solution to (\ref{E:vectorCauchySG}) with initial condition $u_0\in\mathcal{H}$ if $u$ satisfies the first identity in (\ref{E:vectorCauchySG}) in $(\mathcal{D}(\vec{\mathcal{L}}))^\ast$ and the second in $\mathcal{H}$.
\end{definition}

The structure of solutions is as in the metric graph case. 

\begin{theorem}\label{T:SGstructure}
Suppose $u$ is a solution to (\ref{E:vectorCauchySG}) with initial condition $u_0\in\mathcal{H}$. Let $\eta_0\in \ker \partial^\ast$ and $h_0\in \mathcal{D}(\mathcal{E})$ be such that $u_0=\partial h_0+\eta_0$. Then $u$ is of form $u(t)=\partial h(t)+\eta_0$, $t\geq 0$, with a function $h:[0,+\infty)\to \mathcal{D}(\mathcal{E})$ satisfying $dh(0)=dh_0$.
\end{theorem}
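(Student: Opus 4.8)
The plan is to mimic the proof of Theorem~\ref{T:metricstructure} line by line, replacing the metric graph objects by their Sierpinski gasket counterparts: $d\rightsquigarrow\partial$, $d^\ast\rightsquigarrow\partial^\ast$, $L^2(X_\Gamma,\mu_\Gamma)\rightsquigarrow\mathcal{H}$, $W^{1,2}(X_\Gamma,\mu_\Gamma)\rightsquigarrow\mathcal{D}(\mathcal{E})$, $\mathcal{L}_\Gamma\rightsquigarrow\mathcal{L}$, $\vec{\mathcal{L}}_\Gamma\rightsquigarrow\vec{\mathcal{L}}$, and $\frac12 d(u^2)\rightsquigarrow\frac12\partial\langle u,u\rangle$. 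The only structural facts needed are already in place: the orthogonal Helmholtz--Hodge decomposition $\mathcal{H}=\im\partial\oplus\ker\partial^\ast$ from \eqref{E:Helmholtz}, the fact that $\ker\partial=\mathbb{R}$ with $\im\partial\cong\mathcal{D}(\mathcal{E})/\mathbb{R}$ isometrically (Remark~\ref{R:isoisoSG}), the density of $\mathcal{D}(\vec{\mathcal{L}})$ in $\mathcal{H}$ (it is the domain of a generator of a closed form), and the distributional definitions of $\vec{\mathcal{L}}$, $\partial^\ast$ and $\partial\langle u,u\rangle$.

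First I would use \eqref{E:Helmholtz} to write, for each fixed $t\geq 0$, the unique decomposition $u(t)=\partial h(t)+\eta(t)$ with $\eta(t)\in\ker\partial^\ast$ determined uniquely and $h(t)\in\mathcal{D}(\mathcal{E})$ determined up to an additive constant. Since $t\mapsto u(t)$ is continuous on $[0,+\infty)$ and differentiable on $(0,+\infty)$ by the definition of solution, the orthogonal projection $\eta(t)$ of $u(t)$ onto the closed subspace $\ker\partial^\ast$ inherits the same regularity, hence so does $\partial h(t)=u(t)-\eta(t)$, and by the isometry of Remark~\ref{R:isoisoSG} also $h$ regarded as a $\mathcal{D}(\mathcal{E})/\mathbb{R}$-valued function. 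Next, for any fixed test vector field $v\in\ker\partial^\ast\subset\mathcal{D}(\vec{\mathcal{L}})$ I would pair the first identity in \eqref{E:vectorCauchySG} against $v$: using $\partial^\ast v=0$ the distributional terms $\vec{\mathcal{L}}u(t)(v)=-\partial^\ast u(t)(\partial^\ast v)$ and $\partial\langle u,u\rangle(t)(v)=\langle(\partial^\ast v)u(t),u(t)\rangle_{\mathcal{H}}$ both vanish, and $\langle u_t(t),v\rangle_{\mathcal{H}}=\langle\partial h_t(t)+\eta_t(t),v\rangle_{\mathcal{H}}=\langle\eta_t(t),v\rangle_{\mathcal{H}}$ since $\partial h_t(t)\perp\ker\partial^\ast$. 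Hence $\langle\eta_t(t),v\rangle_{\mathcal{H}}=0$ for all $t>0$ and all $v\in\ker\partial^\ast$.

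From this, integrating in time and using continuity at zero, $\langle\eta(t)-\eta_0,v\rangle_{\mathcal{H}}=\int_0^t\langle\eta_\tau(\tau),v\rangle_{\mathcal{H}}\,d\tau=0$ for every $v\in\ker\partial^\ast$, so the vector $\eta(t)-\eta_0$ is orthogonal to $\ker\partial^\ast$; but it also lies in $\ker\partial^\ast$, hence $\eta(t)=\eta_0$ in $\mathcal{H}$ for all $t\geq 0$. This gives $u(t)=\partial h(t)+\eta_0$ with $h:[0,+\infty)\to\mathcal{D}(\mathcal{E})$ of the stated regularity, and evaluating at $t=0$ yields $\partial h(0)+\eta_0=u_0=\partial h_0+\eta_0$, i.e.\ $\partial h(0)=\partial h_0$.

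I do not anticipate a serious obstacle, since the argument is purely functional-analytic and every ingredient has a direct analogue on $K$. The one point requiring a little care is justifying that $\eta_t(t)$ exists and that $t\mapsto\langle\eta(t),v\rangle_{\mathcal{H}}$ is $C^1$ on $(0,+\infty)$ with derivative $\langle\eta_t(t),v\rangle_{\mathcal{H}}$ — this follows because orthogonal projection onto the closed subspace $\ker\partial^\ast$ is a bounded linear operator, hence commutes with differentiation of the $\mathcal{H}$-valued path $u$; and the interchange of $\int_0^t$ with the pairing against $v$ is justified by continuity of $\eta_\tau$ on $(0,+\infty)$ together with continuity of $\eta$ up to $0$. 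Another minor subtlety is that $\ker\partial^\ast\subset\mathcal{D}(\vec{\mathcal{L}})$, which holds because every $v\in\ker\partial^\ast$ satisfies $\partial^\ast v=0\in\mathcal{D}(\mathcal{E})$ and $\vec{\mathcal{L}}v=-\partial\partial^\ast v=0$. These are exactly the same technical points as in the metric graph proof, so the write-up is a routine transcription.
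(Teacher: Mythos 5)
Your proposal is correct and coincides with the paper's own proof, which simply states that one replaces $d$, $d^\ast$ by $\partial$, $\partial^\ast$ and, using the Helmholtz--Hodge decomposition (\ref{E:Helmholtz}), follows the proof of Theorem \ref{T:metricstructure} verbatim. Your write-up is just a careful transcription of that argument, including the correct observation that the convection term $\partial\langle u,u\rangle(t)(v)=\langle(\partial^\ast v)u(t),u(t)\rangle_{\mathcal{H}}$ vanishes for $v\in\ker\partial^\ast$.
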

\begin{proof}
Considering $\partial$ and $\partial^\ast$ in place of $d$ and $d^\ast$, respectively and using (\ref{E:Helmholtz}) we can follow the proof of Theorem \ref{T:metricstructure}.
\end{proof}

Again we can conclude an existence and uniqueness statement for solutions.

\begin{theorem}\label{T:exSG}
Assume $u_0=\partial h_0$ with $h_0\in\mathcal{D}(\mathcal{E})$. Let $w(t)$ denote the unique solution $e^{t\mathcal{L}}w_0$ to (\ref{E:Kirchhoffheat}) with initial condition $w_0:=e^{-h_0/2}$. Then the function $u(t):=-2\partial\log w(t)$, $t\geq 0$, is the
unique solution to (\ref{E:vectorCauchySG}). 
\end{theorem}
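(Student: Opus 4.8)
The plan is to mirror the proof of Theorem~\ref{T:exmetric}, replacing $d,d^\ast$ by $\partial,\partial^\ast$ and the decomposition $L^2(X_\Gamma,\mu_\Gamma)=\im d\oplus\ker d^\ast$ by the Helmholtz--Hodge decomposition (\ref{E:Helmholtz}). For \emph{existence}, set $w_0:=e^{-h_0/2}$; by (\ref{E:supboundSG}) the function $h_0$ is bounded, so $w_0\in L^2(K,\mu)$ is bounded and strictly positive on a set of positive measure, and since $(\mathcal{E},\mathcal{D}(\mathcal{E}))$ is a regular Dirichlet form the semigroup $(e^{t\mathcal{L}})_{t>0}$ is conservative; hence $w(t)=e^{t\mathcal{L}}w_0$ is for every $t>0$ in $\mathcal{D}(\mathcal{E})$, bounded, continuous and bounded away from zero on $K$ (with $\inf_K w(t)\geq e^{-\|h_0\|_{\sup}/2}$). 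Since $\mathcal{D}(\mathcal{E})$ is an algebra and contains reciprocals of functions bounded away from zero, $h(t):=-2\log w(t)$ defines a $C^1$ map $(0,\infty)\to\mathcal{D}(\mathcal{E})$ satisfying the potential Burgers equation (\ref{E:metricKPZ}) (with $\partial,\partial^\ast$ in place of $d,d^\ast$) for all $\varphi\in\mathcal{D}(\mathcal{E})$, as recalled in the text just before (\ref{E:vectorCauchySG}). Differentiating, $u(t):=\partial h(t)=-2\partial\log w(t)$ gives $u_t(t)=\partial h_t(t)$ in $\mathcal{H}$, and testing (\ref{E:metricKPZ}) against $\varphi=\partial^\ast v$ with $v\in\mathcal{D}(\vec{\mathcal{L}})$ yields exactly the first identity of (\ref{E:vectorCauchySG}) in $(\mathcal{D}(\vec{\mathcal{L}}))^\ast$, using the distributional definitions of $\vec{\mathcal{L}}$ and of $\partial\langle u,u\rangle$ in (\ref{E:convect}). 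Continuity at $t=0$ is checked exactly as in Theorem~\ref{T:exmetric}: write $u(t)-u_0=-2(\partial w(t)-\partial w_0)w(t)^{-1}-2(w(t)^{-1}-w_0^{-1})\partial w_0$, bound the first term by $\|w(t)-w_0\|_{\mathcal{D}(\mathcal{E})}$ times $\sup|w(t)^{-1}|$ using (\ref{E:boundedaction}), bound the second using (\ref{E:supboundSG}) and continuity of $w(\cdot)$ and of $w(\cdot)^{-1}$ in $\mathcal{D}(\mathcal{E})$, and let $t\to0$.

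For \emph{uniqueness}, invoke Theorem~\ref{T:SGstructure}: any solution $u$ with $u_0=\partial h_0$ stays in $\im\partial$, so $u(t)=\partial\tilde h(t)$ for a potential $\tilde h:[0,\infty)\to\mathcal{D}(\mathcal{E})$, continuous on $[0,\infty)$ and differentiable on $(0,\infty)$, satisfying (\ref{E:metricKPZ}) for test functions $\varphi=\partial^\ast v$, $v\in\mathcal{D}(\vec{\mathcal{L}})$. To upgrade to all $\varphi\in\mathcal{D}(\mathcal{E})$ one must, as in the metric-graph proof, readjust $\tilde h$ by an additive time-dependent constant $G(t)$ so that the equation also holds against constants; here one needs the Sierpinski-gasket analogue of the decomposition (\ref{E:decomptestfcts}), namely that every $\varphi\in\mathcal{D}(\mathcal{E})$ can be written $\varphi=\partial^\ast v+c$ with $v\in\mathcal{D}(\vec{\mathcal{L}})$ and $c\in\mathbb{R}$. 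This follows because $\langle\partial^\ast v,f\rangle_{L^2(K,\mu)}=\langle v,\partial f\rangle_{\mathcal{H}}=0$ for all $v\in\mathcal{D}(\vec{\mathcal{L}})$ (a dense subspace of $\mathcal{H}$) forces $f\in\ker\partial=\mathbb{R}$ by Remark~\ref{R:isoisoSG}, and the constants form a closed subspace. With the readjusted potential $h$ one sets $w(t):=e^{-h(t)/2}$ and checks, via $\mathcal{L}w(t)=\partial^\ast(-\tfrac12 e^{-h(t)/2}\partial h(t))=-\tfrac12 e^{-h(t)/2}(\mathcal{L}h(t)-\tfrac12\langle\partial h(t),\partial h(t)\rangle)=w_t(t)$ --- the first equality being the Sierpinski-gasket version of the Leibniz/chain rule for $\partial^\ast$, i.e.\ the analogue of \cite[Lemma 3.2]{HRT13} --- that $w$ solves the heat equation (\ref{E:Kirchhoffheat}) with the \emph{same} initial datum $w_0=e^{-h_0/2}$; since that Cauchy problem has a unique $L^2(K,\mu)$-solution, $w$, hence $h$ up to an additive constant, hence $u=\partial h$, is uniquely determined.

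The main obstacle is the verification that the product/chain rule for $\partial^\ast$ used in the last display is available in the present generality, i.e.\ for the first-order calculus on $K$ with an arbitrary atom-free Radon measure $\mu$ of full support as volume measure, rather than the specific setup of \cite{HRT13}; one must confirm that \cite[Lemma 3.2]{HRT13} (or the equivalent statement $\partial^\ast(g\,\partial f)=g\,\partial^\ast(\partial f)-\langle\partial g,\partial f\rangle_{\mathcal{H}}$ in the distributional sense) transfers, and that $e^{-h(t)/2}\partial h(t)$ indeed lies in the relevant domain so that the identity makes sense in $(\mathcal{D}(\mathcal{E}))^\ast$ and then, since $w_t(t)\in\mathcal{H}$, in $\mathcal{H}$. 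A secondary technical point is the $C^1$-regularity of $t\mapsto h(t)=-2\log w(t)$ with values in $\mathcal{D}(\mathcal{E})$, which requires that $t\mapsto w(t)$ is $C^1$ into $\mathcal{D}(\mathcal{E})$ on $(0,\infty)$ (analyticity of the heat semigroup) together with the chain rule in the Banach algebra $\mathcal{D}(\mathcal{E})$ applied to the smooth function $\log$ away from $0$; both are standard but should be cited. Everything else is, as the authors note, a routine transcription of the metric-graph arguments.
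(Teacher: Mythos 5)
Your proposal is correct and follows essentially the same route as the paper: the authors' own proof is a one-line reduction to Theorem \ref{T:exmetric}, noting precisely the two points you elaborate, namely that continuity at $t=0$ is verified via (\ref{E:boundedaction}) and that uniqueness follows from the Helmholtz--Hodge decomposition (\ref{E:Helmholtz}) together with Theorem \ref{T:SGstructure}. The technical issues you flag (transfer of the chain rule for $\partial^\ast$ from \cite{HRT13} and the $C^1$-regularity of $t\mapsto \log w(t)$ in $\mathcal{D}(\mathcal{E})$) are implicitly relied upon by the paper as well, so your more detailed transcription is, if anything, the more careful account.
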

\begin{proof}
The proof is similar to that of Theorem \ref{T:exmetric}, note that the continuity of the Cole-Hopf solution at zero can be verified using (\ref{E:boundedaction}) and the uniqueness follows from (\ref{E:Helmholtz}) together with Theorem \ref{T:SGstructure}.
\end{proof}

Also the following estimates are as before, see the appendix for a proof.
\begin{corollary}\label{C:rudiestSG}
Let $u_0$, $h_0$ and $u$ be as in Theorem \ref{T:exSG}. Assume in addition that $h_0(x_0)=0$ for some $x_0\in K$. 
\begin{enumerate}
\item[(i)] We have $\sup_{t>0}\left\|u(t)\right\|_{\mathcal{H}}\leq c_1\left\|u_0\right\|_{\mathcal{H}}e^{c_2\left\|u_0\right\|_{\mathcal{H}}}$ with positive constants $c_1$ and $c_2$ independent of $u_0$.
\item[(ii)] If $\widetilde{u}_0=d\widetilde{h}_0$ is another initial condition with $\widetilde{h}_0\in \mathcal{D}(\mathcal{E})$ such that $\widetilde{h}_0(x_0)=0$, and $\widetilde{u}$ the corresponding solution, then 
\[\sup_{t>0} \left\|u(t)-\widetilde{u}(t)\right\|_{\mathcal{H}}
\leq c_3(\left\|u_0\right\|_{\mathcal{H}}+1)^2e^{c_4(\left\|u_0\right\|_{\mathcal{H}}+\left\|\widetilde{u}_0\right\|_{\mathcal{H}})}\left\|u_0-\widetilde{u}_0\right\|_{\mathcal{H}}\]
with positive constants $c_3$ and $c_4$ independent of $u_0$ and $\widetilde{u}_0$.
\end{enumerate}
\end{corollary}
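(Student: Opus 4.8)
The plan is to read the estimates directly off the Cole--Hopf representation $u(t)=-2\partial\log w(t)$ of Theorem \ref{T:exSG}, using only the module inequality (\ref{E:boundedaction}), the Sobolev-type bound (\ref{E:supboundSG}), and the standard contraction and conservativity properties of the heat semigroup $(e^{t\mathcal{L}})_{t\geq 0}$ of $(\mathcal{E},\mathcal{D}(\mathcal{E}))$. The first step is a uniform two-sided bound on $w(t)=e^{t\mathcal{L}}w_0$, where $w_0=e^{-h_0/2}$. Since $h_0(x_0)=0$, the resistance estimate (\ref{E:resistKw}) applied to all of $K$ (the case $m=0$) together with Remark \ref{R:isoisoSG} gives $\left\|h_0\right\|_{\sup}^2\leq\sup_{x,y\in K}(h_0(x)-h_0(y))^2\leq\mathcal{E}(h_0)=\left\|u_0\right\|_{\mathcal{H}}^2$, so that $e^{-\left\|u_0\right\|_{\mathcal{H}}/2}\leq w_0\leq e^{\left\|u_0\right\|_{\mathcal{H}}/2}$ pointwise; since $(e^{t\mathcal{L}})_{t\geq 0}$ is Markovian and conservative, the same two-sided bound holds for $w(t)$, uniformly in $t\geq 0$. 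By the chain rule for $\partial$ (cf.\ \cite[Lemma 3.2]{HRT13}) one has $w(t)\in\mathcal{D}(\mathcal{E})$ with $\inf_K w(t)>0$ and $u(t)=-2\,w(t)^{-1}\partial w(t)$.

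For (i) I would combine $u(t)=-2\,w(t)^{-1}\partial w(t)$ with (\ref{E:boundedaction}) and the $\mathcal{E}$-contractivity $\mathcal{E}(w(t))\leq\mathcal{E}(w_0)$ of the heat semigroup, to obtain
\[
\left\|u(t)\right\|_{\mathcal{H}}\leq 2\left\|w(t)^{-1}\right\|_{\sup}\left\|\partial w(t)\right\|_{\mathcal{H}}\leq 2e^{\left\|u_0\right\|_{\mathcal{H}}/2}\,\mathcal{E}(w_0)^{1/2},
\]
and then to bound $\mathcal{E}(w_0)=\frac14\left\|w_0\,\partial h_0\right\|_{\mathcal{H}}^2\leq\frac14 e^{\left\|u_0\right\|_{\mathcal{H}}}\left\|u_0\right\|_{\mathcal{H}}^2$ once more by (\ref{E:boundedaction}) and the first step; taking the supremum over $t>0$ gives (i).

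For (ii), with $\widetilde{w}(t)=e^{t\mathcal{L}}e^{-\widetilde{h}_0/2}$, I would add and subtract $\widetilde{w}(t)\partial\widetilde{w}(t)$ in the numerator of $w(t)^{-1}\partial w(t)-\widetilde{w}(t)^{-1}\partial\widetilde{w}(t)$, obtaining
\[
u(t)-\widetilde{u}(t)=-2\,w(t)^{-1}\partial\big(w(t)-\widetilde{w}(t)\big)-2\,\frac{\widetilde{w}(t)-w(t)}{w(t)\widetilde{w}(t)}\,\partial\widetilde{w}(t);
\]
by (\ref{E:boundedaction}) and the two-sided bounds on $w(t),\widetilde{w}(t)$ this reduces the task to estimating $\left\|\partial\widetilde{w}(t)\right\|_{\mathcal{H}}$ (as in (i)) and $\left\|w(t)-\widetilde{w}(t)\right\|_{\mathcal{D}(\mathcal{E})}$. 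Since $w(t)-\widetilde{w}(t)=e^{t\mathcal{L}}(w_0-\widetilde{w}_0)$, the latter is dominated by $\left\|w_0-\widetilde{w}_0\right\|_{\mathcal{D}(\mathcal{E})}$ by $\mathcal{E}$- and $L^2$-contractivity, with (\ref{E:supboundSG}) used for the $\sup$-norm. Finally I would bound $\left\|w_0-\widetilde{w}_0\right\|_{\mathcal{D}(\mathcal{E})}$ in terms of $\left\|u_0-\widetilde{u}_0\right\|_{\mathcal{H}}$: a Lipschitz estimate for $t\mapsto e^{-t/2}$ on $[-M,M]$ with $M=\max(\left\|u_0\right\|_{\mathcal{H}},\left\|\widetilde{u}_0\right\|_{\mathcal{H}})$ handles the $L^2$-part (here $\mu(K)<+\infty$, since $K$ is compact), and the chain-rule decomposition
\[
\partial(w_0-\widetilde{w}_0)=-\frac12 e^{-h_0/2}\partial(h_0-\widetilde{h}_0)+\frac12\big(e^{-\widetilde{h}_0/2}-e^{-h_0/2}\big)\partial\widetilde{h}_0
\]
together with (\ref{E:boundedaction}), $\left\|h_0-\widetilde{h}_0\right\|_{\sup}\leq\mathcal{E}(h_0-\widetilde{h}_0)^{1/2}=\left\|u_0-\widetilde{u}_0\right\|_{\mathcal{H}}$ (using $(h_0-\widetilde{h}_0)(x_0)=0$), $\left\|\partial(h_0-\widetilde{h}_0)\right\|_{\mathcal{H}}=\left\|u_0-\widetilde{u}_0\right\|_{\mathcal{H}}$ and $\left\|\partial\widetilde{h}_0\right\|_{\mathcal{H}}=\left\|\widetilde{u}_0\right\|_{\mathcal{H}}$ handles the energy part; collecting the resulting polynomial-times-exponential factors yields (ii).

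I do not expect a genuine obstacle, as this is essentially the classical Cole--Hopf bookkeeping adapted to $(\mathcal{E},\mathcal{D}(\mathcal{E}))$ and $\mathcal{H}$; the step needing the most care is the last one, where passing back from $w_0,\widetilde{w}_0$ to $h_0,\widetilde{h}_0$ produces several exponential and polynomial prefactors that must be majorised uniformly in $t>0$ so as to fit the stated form $c_3(\left\|u_0\right\|_{\mathcal{H}}+1)^2e^{c_4(\left\|u_0\right\|_{\mathcal{H}}+\left\|\widetilde{u}_0\right\|_{\mathcal{H}})}$.
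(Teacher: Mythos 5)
Your argument is correct and follows essentially the same route as the paper: the paper's proof (sketched in the appendix for the metric graph case and transferred verbatim to $K$) likewise combines the Cole--Hopf representation, the resistance estimate $\left\|h_0\right\|_{\sup}\leq c\,\mathcal{E}(h_0)^{1/2}$, the two-sided bound on $w(t)$ from conservativity, and $\mathcal{E}$- and $L^2$-contractivity of the semigroup. The only cosmetic difference is that where you carry out the Lipschitz-in-energy estimates for $\log$ and $\exp$ by hand via explicit product/chain-rule decompositions, the paper packages both into the single cited composition estimate $\left\|F(f)-F(g)\right\|\leq c(\left\|F'\right\|_{\sup}+\left\|F''\right\|_{\sup})(\left\|f\right\|+1)\left\|f-g\right\|$ from \cite[Proposition 3.1]{HZ12}.
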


%

\begin{remark}
It is well known that in the context of classical partial differential equations the Cole-Hopf transform connects an entire hierarchy of equations and allows to obtain exact solutions to non-linear equations from solutions to linear equations on each particular level, \cite{KuSh09, Tasso76}. On fractals linear second order ('heat') equations (\ref{E:Kirchhoffheat}) are tractable whenever we can understand a natural Laplace operator. In comparison, linear first order ('transport') equations of type $g_t=g_x$ are more difficult to analyze, and due to possible energy singularity the existing methods, such as \cite{AT14},  may work for some volume measures, but certainly not for all. Linear equations of higher order, for instance $g_t=g_{xxx}$, have not yet been studied on fractals, and it is an interesting open question how to formulate them in a meaningful way.
\end{remark}

\begin{remark}
Formulation (\ref{E:vectorCauchySG}) and Theorem \ref{T:exSG} only rely on basic ingredients such as the Markov property and the conservativity of the semigroup. In a similar manner one could study Burgers type equations via the Cole-Hopf transform for a large class of Dirichlet forms, for instance for purely non-local forms. However, the physical relevance of such models may of course remain a matter of discussion.
\end{remark}

\section{Metric graph approximation for Cole-Hopf solutions}\label{S:approx}

In this section we consider metric graph approximations to the Sierpinski gasket $K$. We show that a Cole-Hopf solution 
of the Burgers equation (\ref{E:vectorCauchySG}) on $K$ can be approximated in a suitable sense by Cole-Hopf solutions to Burgers equations (\ref{E:vectorCauchy}) on the metric graphs approximating $K$. Our approximation scheme follows the methods in \cite{PS17} and \cite{PS17a}.

Let $(\Gamma_m)_{m\geq 0}$ be the sequence of metric graphs $\Gamma_m=(E_m,V_m,i_m,j_m)$ naturally defined by the graphs $G_m=(V_m,E_m)$ approximating $K$ as in Section \ref{S:SG}. For simplicity we write $i=i_m$ and $j=j_m$. Note that for all $e\in E_m$ we have $l_e=2^{-m}$. On the space $X_m:=X_{\Gamma_m}$, defined as in (\ref{E:spaceX}), we consider the bilinear form $(\mathcal{E}_{\Gamma_m}, \dot{W}^{1,2}(X_m))$, where
\[\mathcal{E}_{\Gamma_m}(f):=2^{-m}r^{-m}\sum_{e\in E_m} \mathcal{E}_e(f_e) \quad \text{and}\quad \mathcal{E}_e(f_e)=\int_0^{2^{-m}} (f_e'(t))^2 dt.\]

To a function $f\in \dot{W}^{1,2}(X_m)$ which is linear on each edge $e\in E_m$ we refer as \emph{edge-wise linear} function, and we denote the subspace of $\dot{W}^{1,2}(X_m)$ of
such functions by $EL_m$. If $f\in EL_m$, then its derivative on $e$ is the constant function
$f_e'=2^m(f(j(e))-f(i(e)))$, so that 
\begin{equation}\label{E:harmoniccase}
\mathcal{E}_e(f_e)=\int_0^{2^{-m}}(f'_e(t))^2dt=2^m(f(j(e))-f(i(e)))^2
\end{equation}
on each $e\in E_m$ and consequently $\mathcal{E}_{\Gamma_m}(f)=\mathcal{E}_m(f|_{V_m})$. For a general function $f\in \dot{W}^{1,2}(X_m)$ formula (\ref{E:harmoniccase}) becomes an inequality in which the left hand side dominates the right hand side. This implies  
\begin{equation}\label{E:metricdominates}
\mathcal{E}_m(f|_{V_m})\leq \mathcal{E}_{\Gamma_m}(f), \quad f\in \dot{W}^{1,2}(X_m).
\end{equation}
By $H_{\Gamma_m}$ we denote the linear operator $H_{\Gamma_m}:\dot{W}^{1,2}(X_m)\to EL_m$ that assigns to a function $f\in \dot{W}^{1,2}(X)$ the unique edge-wise linear function on $X_m$ that interpolates $f|_{V_m}$.

A function $f$ on $K$ is called \emph{$m$-piecewise harmonic} if it minimizes all energies $\mathcal{E}_n$, $n\geq m+1$, amongst all functions on $K$ which coincide with $f|_{V_m}$ on $V_m$. If $f$ is $m$-piecewise harmonic, then it is also $n$-piecewise harmonic for any $n\geq m$, $f\in\mathcal{D}(\mathcal{E})$ and $\mathcal{E}(f)=\mathcal{E}_m(f|_{V_m})$.
For details see for instance \cite{Str06}. We write $PH_m$ for the subspace of all $m$-piecewise harmonic functions. As usual we denote by $\psi_{p,m}$ the function in $PH_m$ satisfying $\psi_{p,m}(q)=\delta_{pq}$, $q\in V_m$. Given a function $f$ on $V_m$ we write $H_m(f)\in\mathcal{D}(\mathcal{E})$ to denote its unique extension to an $m$-piecewise harmonic function, 
\begin{equation}\label{E:harmextop}
H_mf(x):=\sum_{p\in V_m} f(p)\psi_{p,m}(x),\quad x\in K.
\end{equation}
We use the same symbol $H_m$ to denote the linear operator $H_m:\mathcal{D}(\mathcal{E})\to PH_m$ defined by $H_m(f):=H_m(f|_{V_m})$, $f\in\mathcal{D}(\mathcal{E})$.

Given a function $f\in \mathcal{D}(\mathcal{E})$ on $K$, we can interpret its pointwise restriction to the line segment connecting two neighbor points $p\sim_m q$ from $V_m$ as a continuous function $f_e$ on the edge $e\in E_m$ of $\Gamma_m$ with $i(e)=p$ and $j(e)=q$. This defines a continuous function on $X_m$, which we denote by $f|_{X_m}$. Since a function $f\in PH_m$ is linear on all line segments connecting two neighbor points $p\sim_m q$, the above interpretation $f|_{X_m}$ of $f$
is a function in $EL_m$ which satisfies (\ref{E:harmoniccase}) on each edge and $\mathcal{E}_{\Gamma_m}(f|_{X_m})=\mathcal{E}_m(f|_{V_m})=\mathcal{E}(f)$. Moreover, we have $H_{\Gamma_m}(f|_{X_m})=H_m(f)|_{X_m}$ for any $f\in\mathcal{D}(\mathcal{E})$. Since
\begin{equation}\label{E:phsplines}
\lim_{m\to \infty} \mathcal{E}(H_m(f)-f)=0
\end{equation}
for any $f\in\mathcal{D}(\mathcal{E})$, see for instance \cite[Theorem 1.4.4]{Str06}, we observe that
\[\mathcal{E}(f)=\sup_m\mathcal{E}_{\Gamma_m}(H_m(f)|_{X_m}), \quad f\in\mathcal{D}(\mathcal{E}).\]

Now let $c$ be the function on $V_\ast$ defined by
\[c(p):=\begin{cases}\frac14 \quad \text{if $p\in V_\ast\setminus V_0$} \\ \frac12 \quad \text{if $p\in V_0$.}\end{cases}\]
Given an edge $e\in E_m$ we set
\[\psi_{e,m}(x):=c(i(e))\psi_{i(e),m}(x)+c(j(e))\psi_{j(e),m}(x),\quad x\in K,\]
and obtain a function $\psi_{e,m}$ which satisfies 
\begin{equation}\label{E:sumpsiem}
\sum_{e\in E_m}\left\langle \psi_{e,m},\psi_{e',m}\right\rangle_{L^2(K,\mu)}=\sum_{p\in V_0}\psi_{p,m}(x)+\sum_{p\in V_m\setminus V_0} \psi_{p,m}(x)= 1,\quad x\in K.
\end{equation}
We endow the space $X_m$ with the measure $\mu_m:=\mu_{\Gamma_m}$ defined as in (\ref{E:measure}) with constants 
\[c_e:= 2^m\left(\int_K \psi_{e,m}(x)\mu(dx)\right),\quad e\in E_m,\] 
so that $\mu_m(dt):=\sum_{e\in E_m} c_e\lambda_e(dt)$.
The average of a function $f\in L^2(X_m,\mu_m)$ on an edge $e\in E_m$ we denote by
\[\overline{f_e}:=2^m\int_0^{2^{-m}} f_e(s)\:ds.\]
We write $W^{1,2}(X_m,\mu_m)$ for the space $\dot{W}^{1,2}(X_m,\mu_m)$ endowed with the Hilbert norm as in (\ref{E:W12norm}) and consider the strongly local regular Dirichlet form $(\mathcal{E}_{\Gamma_m}, W^{1,2}(X_m,\mu_m))$
on $L^2(X_m,\mu_m)$.

%

\subsection{Approximation of solutions to the heat equations}
To study convergence statements we follow \cite{P12, PS17}. We define identification operators  $J_{0,m}:L^2(X_m,\mu_m)\to L^2(K,\mu)$ by 
\[J_{0,m} f(x):=\sum_{e\in E_m} \overline{f_e} \:\psi_{e,m}(x),\quad x\in K.\]

\begin{proposition}\label{P:bounded}
The operators $J_{0,m}$ satisfy $\left\|J_{0,m} f\right\|_{L^2(K,\mu)}\leq \left\|f\right\|_{L^2(X_m,\mu_m)}$ for any $f\in L^2(X_m,\mu_m)$. The adjoint $J_{0,m}^\ast:L^2(K,\mu)\to L^2(X_m,\mu_m)$ of $J_{0,m}$ is given by
\[J_{0,m}^\ast u(t)=\sum_{e\in E_m}\:\mathbf{1}_e(t)\:\frac{\left\langle u,\psi_{e,m}\right\rangle_{L^2(K,\mu)}}{\left(\int_K \psi_{e,m} d\mu\right)},\quad u\in L^2(K,\mu).\]
\end{proposition}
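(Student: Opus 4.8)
The plan is to derive the norm bound from two applications of Jensen's inequality and then to identify the adjoint by a direct duality computation; beyond the definitions of $\mu_m$, $J_{0,m}$ and the partition of unity property (\ref{E:sumpsiem}), nothing is needed. First I would record the elementary facts used below. Since $\mu_m$ restricted to $X_e$ equals $c_e\lambda_e$ with $c_e=2^m\int_K\psi_{e,m}\,d\mu$, we have $\mu_m(X_e)=2^{-m}c_e=\int_K\psi_{e,m}\,d\mu$ and $\|f\|_{L^2(X_m,\mu_m)}^2=\sum_{e\in E_m}c_e\int_0^{2^{-m}}f_e(s)^2\,ds$ for $f\in L^2(X_m,\mu_m)$. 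Each $\psi_{e,m}$ is nonnegative, being a positive combination of the nonnegative tent functions $\psi_{p,m}$, and is not $\mu$-a.e. zero, so $\int_K\psi_{e,m}\,d\mu>0$ because $\supp\mu=K$; moreover $\sum_{e\in E_m}\psi_{e,m}\equiv 1$ on $K$ by (\ref{E:sumpsiem}). Finally $E_m$ is finite, so all the sums below are finite and may be interchanged with integrals.

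For the norm bound I would argue pointwise on $K$: since the numbers $\psi_{e,m}(x)$, $e\in E_m$, are nonnegative and sum to $1$, Jensen's inequality applied to $t\mapsto t^2$ gives
\[
\big(J_{0,m}f(x)\big)^2=\Big(\sum_{e\in E_m}\overline{f_e}\,\psi_{e,m}(x)\Big)^2\le\sum_{e\in E_m}\overline{f_e}^2\,\psi_{e,m}(x),\qquad x\in K.
\]
Integrating against $\mu$ yields $\|J_{0,m}f\|_{L^2(K,\mu)}^2\le\sum_{e\in E_m}\overline{f_e}^2\int_K\psi_{e,m}\,d\mu=\sum_{e\in E_m}\overline{f_e}^2\,2^{-m}c_e$. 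A second use of Jensen (equivalently Cauchy--Schwarz) on the interval $(0,2^{-m})$ gives $\overline{f_e}^2=\big(2^m\int_0^{2^{-m}}f_e(s)\,ds\big)^2\le 2^m\int_0^{2^{-m}}f_e(s)^2\,ds$, hence $\overline{f_e}^2\,2^{-m}c_e\le c_e\int_0^{2^{-m}}f_e(s)^2\,ds$; summing over $e\in E_m$ and inserting the formula for $\|\cdot\|_{L^2(X_m,\mu_m)}$ recorded above gives exactly $\|J_{0,m}f\|_{L^2(K,\mu)}\le\|f\|_{L^2(X_m,\mu_m)}$.

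To identify the adjoint I would take $f\in L^2(X_m,\mu_m)$ and $u\in L^2(K,\mu)$ and expand
\[
\langle J_{0,m}f,u\rangle_{L^2(K,\mu)}=\sum_{e\in E_m}\overline{f_e}\,\langle u,\psi_{e,m}\rangle_{L^2(K,\mu)}=\sum_{e\in E_m}2^m\langle u,\psi_{e,m}\rangle_{L^2(K,\mu)}\int_0^{2^{-m}}f_e(s)\,ds.
\]
By the definition of $c_e$ one has $2^m\langle u,\psi_{e,m}\rangle_{L^2(K,\mu)}=c_e\,\langle u,\psi_{e,m}\rangle_{L^2(K,\mu)}\big/\int_K\psi_{e,m}\,d\mu$, so the right-hand side equals $\sum_{e\in E_m}c_e\int_0^{2^{-m}}f_e(s)\,g_e(s)\,ds=\langle f,g\rangle_{L^2(X_m,\mu_m)}$, where $g:=\sum_{e\in E_m}\mathbf{1}_e\,\langle u,\psi_{e,m}\rangle_{L^2(K,\mu)}\big/\int_K\psi_{e,m}\,d\mu$ is constant on each edge. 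As a finite linear combination of the $\mathbf{1}_e$, the function $g$ lies in $L^2(X_m,\mu_m)$, so $J_{0,m}^\ast u=g$ by uniqueness of the adjoint, which is the asserted formula. The computation is essentially bookkeeping with the constants $c_e$; the only points that require a moment's care are noticing that (\ref{E:sumpsiem}) supplies precisely the convex-combination structure needed for Jensen in the first step, and that $\int_K\psi_{e,m}\,d\mu>0$ so that $g$ is well defined — both immediate from $\psi_{e,m}\ge 0$ and $\supp\mu=K$.
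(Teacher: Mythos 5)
Your proof is correct and follows essentially the same route as the paper: the norm bound rests on the partition-of-unity identity (\ref{E:sumpsiem}) combined with an elementary convexity estimate (the paper symmetrizes the expanded double sum via $ab\le\tfrac12(a^2+b^2)$, which amounts to your two applications of Jensen), and the adjoint is identified by the same duality computation $\left\langle J_{0,m}f,u\right\rangle_{L^2(K,\mu)}=\sum_{e\in E_m}\overline{f_e}\left\langle\psi_{e,m},u\right\rangle_{L^2(K,\mu)}$ followed by bookkeeping with the constants $c_e$. No gaps.
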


\begin{proof}
By Cauchy-Schwarz and (\ref{E:sumpsiem})
\begin{align}
\left\|J_{0,m} f\right\|_{L^2(X,\mu)}^2 &=\int_K\sum_{e,e'\in E_m}2^{2m}\int_0^{2^{-m}}\int_0^{2^{-m}}f_e(s)f_{e'}(s')\psi_{e,m}(x)\psi_{e',m}(x)dsds'\mu(dx)\notag\\
&\leq \frac12\sum_{e\in E_m} 2^{m}\int_0^{2^{-m}}f_e(s)^2ds \left(\int_K\psi_{e,m}(x)\mu(dx)\right) \notag\\
&\ \ \ \ + \frac12\sum_{e'\in E_m} 2^{m}\int_0^{2^{-m}}f_{e'}(s')^2ds' \left(\int_K\psi_{e',m}(x)\mu(dx)\right) \notag\\
&=\left\|f\right\|_{L^2(X_m,\mu_m)}^2.\notag
\end{align}
The second statement follows because for any $f\in L^2(X_m,\mu_m)$ and $u\in L^2(K,\mu)$ we have
\[\left\langle J_{0,m} f,u\right\rangle_{L^2(K,\mu)}=\sum_{e\in E_m}2^m\int_0^{2^{-m}}f_e(s)ds\left\langle \psi_{e,m},u\right\rangle_{L^2(K,\mu)}.\]
\end{proof}

The next convergence statement is a special case of \cite[Theorem 1.1]{PS17}.
\begin{theorem}\label{T:conv}
For  any $t>0$ we have 
\[\lim_m \left\|e^{t\mathcal{L}} - J_{0,m} e^{t\mathcal{L}_{\Gamma_m}} J_{0,m}^\ast \right\|_{L^2(K,\mu)\to L^2(K,\mu)}=0.\]
\end{theorem}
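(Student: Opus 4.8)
The statement is a generalized norm resolvent convergence result of heat semigroups on varying $L^2$-spaces, and since it is advertised in the excerpt as a special case of \cite[Theorem 1.1]{PS17}, the plan is to establish the hypotheses of that theorem — or, equivalently, to verify that the Dirichlet forms $(\mathcal{E}_{\Gamma_m}, W^{1,2}(X_m,\mu_m))$ are $\delta_m$-quasi unitarily equivalent to $(\mathcal{E},\mathcal{D}(\mathcal{E}))$ on $L^2(K,\mu)$ in the sense of \cite[Definition 4.4.11]{P12}, with $\delta_m\to 0$. First I would set up the second identification operator on the form level, $J_{1,m}:\dot{W}^{1,2}(X_m) \to \mathcal{H}$ (or rather $W^{1,2}(X_m,\mu_m)\to \mathcal{D}(\mathcal{E})$), using the edge-wise linear / $m$-piecewise harmonic interpolation machinery already recalled in the excerpt: on an edge one replaces $f_e$ by its average $\overline{f_e}$ at the vertex level and extends $m$-piecewise harmonically via $H_m$, so roughly $J_{1,m}f := H_m(v_f)$ where $v_f(p)$ is a suitable vertex value built from the edge averages. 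Dually I would need $J_{0,m}'$ on the other side, comparing $H_m g|_{X_m}$ with $g$.

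Next I would check, in turn, the quantitative compatibility estimates that make up $\delta$-quasi unitary equivalence: (a) $\|J_{0,m}f\|_{L^2(K,\mu)} \le \|f\|_{L^2(X_m,\mu_m)}$, which is exactly Proposition \ref{P:bounded}, together with an approximate isometry estimate $\big|\,\|J_{0,m}f\|_{L^2(K,\mu)}^2 - \|f\|_{L^2(X_m,\mu_m)}^2\,\big| \le \delta_m \|f\|^2$ on the form domain; (b) $\|J_{0,m}^\ast u - \text{(form-level identification of }u)\| \le \delta_m\|u\|_{\mathcal{D}(\mathcal{E})}$ and the symmetric statement; (c) the crucial form-comparison bound $\big|\mathcal{E}(J_{0,m}f, u) - \mathcal{E}_{\Gamma_m}(f, J_{0,m}'u)\big| \le \delta_m \|f\|_{W^{1,2}(X_m,\mu_m)}\|u\|_{\mathcal{D}(\mathcal{E})}$. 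The key inputs for these are: the relation $\mathcal{E}_{\Gamma_m}(H_m(f)|_{X_m}) = \mathcal{E}(f)$ for $m$-piecewise harmonic $f$, the monotonicity (\ref{E:metricdominates}), the energy approximation (\ref{E:phsplines}) $\mathcal{E}(H_mf - f) \to 0$, the resistance estimates (\ref{E:resistKw}) and (\ref{E:resistonedge}) which control oscillation on cells/edges in terms of energy, the partition of unity (\ref{E:sumpsiem}), and the choice $c_e = 2^m\int_K \psi_{e,m}\,d\mu$ of the edge weights which is precisely tuned so that the $L^2$-masses match up to the right order. The quantitative rates enter through the fact that on $\Gamma_m$ one has $l_e = 2^{-m}$ and the energy scaling factor $r^{-m}$, so the oscillation of a finite-energy function across a single edge is $O((2^{-m}r^{-m})\mathcal{E})^{1/2}$, and the $\sup$-bound (\ref{E:supboundSG}) keeps the $L^2\to L^\infty$ passage under control; one obtains $\delta_m = O((5/2)^{-m/2})$ or similar, in any case $\to 0$.

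Once $\delta_m$-quasi unitary equivalence of the forms is in hand, the generalized norm resolvent convergence $\|( \mathcal{L}-\zeta)^{-1} - J_{0,m}(\mathcal{L}_{\Gamma_m}-\zeta)^{-1}J_{0,m}^\ast\| \to 0$ (for $\zeta$ in the resolvent set, e.g. $\zeta = -1$) follows from \cite[Theorem 4.2.10]{P12}; then the passage from resolvents to semigroups $e^{t\mathcal{L}}$ at fixed $t>0$ is the standard functional-calculus / contour-integral argument (write $e^{t\mathcal{L}} = \frac{1}{2\pi i}\int_\gamma e^{t\zeta}(\mathcal{L}-\zeta)^{-1}d\zeta$ over a contour enclosing $(-\infty,0]$, use uniform boundedness of the approximating resolvents and dominated convergence along $\gamma$), exactly as in the proof of \cite[Theorem 1.1]{PS17}; this is the content the excerpt attributes to that theorem, and also recalled in the appendix. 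I expect the main obstacle to be item (c): verifying the form-comparison inequality with an explicit rate, because it requires carefully bookkeeping the difference between an edge-wise linear function on $\Gamma_m$ and its $m$-piecewise harmonic counterpart on $K$, controlling the error introduced by replacing $f_e$ by its average $\overline{f_e}$ (a Poincaré-type estimate on each edge, giving a factor involving $l_e\mathcal{E}_e(f_e)$), and tracking how the mismatch between the self-similar measure $\mu$ and the Lebesgue-type measures $\mu_m$ — reconciled only through the weights $c_e = 2^m\int_K\psi_{e,m}d\mu$ — propagates through the bilinear form; the singularity of the Kusuoka/energy measures relative to $\mu$ means one cannot simply compare densities and must argue at the level of cells. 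Everything else is bookkeeping once the right identification operators and the right $\delta_m$ are fixed.
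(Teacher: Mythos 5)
Your plan is correct and follows essentially the same route as the paper: one verifies $\delta_m$-quasi unitary equivalence of $\mathcal{E}_{\Gamma_m}$ and $\mathcal{E}$ (with $\delta_m = 54\,r^m\max_{|w|=m}\mu(K_w)$) via the operators $J_{0,m}$, $J_{0,m}^\ast$ and the interpolation operators built from $H_m$ and $H_{\Gamma_m}$, and then invokes \cite[Corollary 1.2]{PS17} resp.\ \cite[Theorem 4.2.10, Proposition 4.4.15]{P12} to pass directly to the semigroups, so no separate resolvent-to-semigroup contour argument is needed. The only notable difference is that the paper takes $J_{1,m}f := H_m(f|_{V_m})$ and $\widetilde{J}_{1,m}u := H_m(u)|_{X_m}$ using vertex traces rather than edge averages, which makes your anticipated ``main obstacle'' (c) an exact identity, $\mathcal{E}_{\Gamma_m}(f,\widetilde{J}_{1,m}u)=\mathcal{E}(J_{1,m}f,u)$, since harmonic extension preserves the discrete energy; the real quantitative work then sits in the $L^2$-comparison lemmas you list under (a) and (b).
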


Theorem \ref{T:conv} will follow from the spectral convergence results in \cite{P12, PS17, PS17a}. That the necessary hypotheses are valid we verify in the appendix.

Theorem \ref{T:conv} can be used to see that in some way the solutions to the heat equations on the approximating spaces $X_m$ converge to the solution to the heat equation on $K$. We first collect some prerequisites.

\begin{lemma}\label{L:pre}
Given $w_0\in L^2(K,\mu)$. For any $m\geq 1$ let $w_m(t)$ denote the unique solution to (\ref{E:Kirchhoffheat}) for $\mathcal{L}_{\Gamma_m}$ in $L^2(X_m,\mu_m)$ with initial condition $J_{0,m}^\ast w_0$. Then we have $\sup_m \mathcal{E}_{\Gamma_m}(w_m(t))<+\infty$ for any $t>0$.
\end{lemma}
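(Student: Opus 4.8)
The plan is to exploit the spectral-theoretic fact that, for a nonnegative self-adjoint operator $-\mathcal{L}_{\Gamma_m}$ generating the semigroup $(e^{t\mathcal{L}_{\Gamma_m}})_{t>0}$, the energy of $w_m(t)=e^{t\mathcal{L}_{\Gamma_m}}J_{0,m}^\ast w_0$ is controlled in terms of the initial datum and $t$. Concretely, I would first note that by the standard smoothing estimate for analytic semigroups generated by Dirichlet form generators,
\[
\mathcal{E}_{\Gamma_m}(w_m(t)) = \big\langle (-\mathcal{L}_{\Gamma_m})e^{t\mathcal{L}_{\Gamma_m}}J_{0,m}^\ast w_0,\: e^{t\mathcal{L}_{\Gamma_m}}J_{0,m}^\ast w_0\big\rangle_{L^2(X_m,\mu_m)} \leq \frac{1}{2et}\,\big\|J_{0,m}^\ast w_0\big\|_{L^2(X_m,\mu_m)}^2,
\]
using the elementary bound $\sup_{\lambda\geq 0}\lambda e^{-2t\lambda}=\tfrac{1}{2et}$ applied through the spectral theorem. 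Then I would invoke Proposition \ref{P:bounded}: since $\left\|J_{0,m}\right\|\leq 1$, the adjoint also satisfies $\left\|J_{0,m}^\ast\right\|\leq 1$, hence $\big\|J_{0,m}^\ast w_0\big\|_{L^2(X_m,\mu_m)}\leq \left\|w_0\right\|_{L^2(K,\mu)}$ uniformly in $m$. Combining, $\mathcal{E}_{\Gamma_m}(w_m(t))\leq \tfrac{1}{2et}\left\|w_0\right\|_{L^2(K,\mu)}^2$, which is finite and independent of $m$ for each fixed $t>0$, giving the claim.

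The key steps in order are thus: (1) write $\mathcal{E}_{\Gamma_m}(w_m(t))$ as the quadratic form of $-\mathcal{L}_{\Gamma_m}$ evaluated at $w_m(t)$, valid because $w_m(t)\in\mathcal{D}(\mathcal{L}_{\Gamma_m})\subset W^{1,2}(X_m,\mu_m)$ for $t>0$ by analyticity of the heat semigroup; (2) apply the scalar inequality $\lambda e^{-2t\lambda}\leq \tfrac{1}{2et}$ via the spectral calculus for $-\mathcal{L}_{\Gamma_m}$ to bound this by $\tfrac{1}{2et}\left\|J_{0,m}^\ast w_0\right\|_{L^2(X_m,\mu_m)}^2$; (3) use the uniform operator-norm bound $\left\|J_{0,m}^\ast\right\|_{L^2(K,\mu)\to L^2(X_m,\mu_m)}\leq 1$ from Proposition \ref{P:bounded} to replace the right-hand side by $\tfrac{1}{2et}\left\|w_0\right\|_{L^2(K,\mu)}^2$; (4) take the supremum over $m$.

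I do not expect a genuine obstacle here: the estimate is the standard parabolic smoothing bound, and the only input specific to this setup is the uniform norm bound on $J_{0,m}^\ast$, which is already recorded in Proposition \ref{P:bounded}. The mildly delicate point is purely bookkeeping — making sure that the quadratic form $\mathcal{E}_{\Gamma_m}$ is exactly the form $\big\langle(-\mathcal{L}_{\Gamma_m})\cdot,\cdot\big\rangle$ on $\mathcal{D}(\mathcal{L}_{\Gamma_m})$, which holds by definition of the generator of the Dirichlet form $(\mathcal{E}_{\Gamma_m}, W^{1,2}(X_m,\mu_m))$, and that $w_m(t)$ indeed lies in this domain for every $t>0$, which is immediate from the analyticity/smoothing property of the symmetric Markovian semigroup. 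One could equally phrase the bound without the sharp constant, using only that $t\mapsto \|(-\mathcal{L}_{\Gamma_m})^{1/2}e^{t\mathcal{L}_{\Gamma_m}}f\|^2$ is bounded on $[\varepsilon,\infty)$; the sharp constant is merely cosmetic.
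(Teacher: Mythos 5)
Your proposal is correct and follows essentially the same route as the paper: both identify $\mathcal{E}_{\Gamma_m}(w_m(t))$ with $\bigl\|\sqrt{-\mathcal{L}_{\Gamma_m}}\,e^{t\mathcal{L}_{\Gamma_m}}J_{0,m}^\ast w_0\bigr\|_{L^2(X_m,\mu_m)}^2$, apply the spectral theorem together with the boundedness of $s\mapsto s e^{-2s}$ to get the $m$-independent smoothing estimate, and then use the uniform bound $\|J_{0,m}^\ast\|\leq 1$ from Proposition \ref{P:bounded}. Your sharp constant $\tfrac{1}{2et}$ is a cosmetic improvement over the paper's unspecified $c\,t^{-1/2}$ prefactor, but the argument is the same.
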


\begin{proof}
There is a constant $c>0$ independent of $m$ and $t$ such that for any $t>0$ we have
\begin{equation}\label{E:analyticbound}
\big\|\sqrt{\mathcal{L}_{\Gamma_m}}e^{t\mathcal{L}_{\Gamma_m}}\big\|_{L^2(X_m,\mu_m)\to L^2(X_m,\mu_m)}\leq c\: t^{-1/2},
\end{equation}
as follows from the spectral theorem: Since the metric graphs $\Gamma_m$ are compact, the operators $\mathcal{L}_{\Gamma_m}$ have pure point spectrum, \cite[Proposition 2.2.14]{P12}. Consequently the eigenvalues of $-\mathcal{L}_{\Gamma_m}$, ordered with multiplicities taken into account, are $0=\lambda_0<\lambda_1\leq \lambda_2\leq ...$ with only accumulation point $+\infty$, and 
\[-\mathcal{L}_{\Gamma_m}f=\sum_{k=0}^\infty \lambda_k(m)\left\langle \varphi_k(m),f\right\rangle_{L^2(X_m,\mu_m)}\varphi_k(m),\quad f\in L^2(X_m,\mu_m),\]
where $\varphi_k(m)$ are the eigenfunction of $-\mathcal{L}_{\Gamma_m}$ for the eigenvalue $\lambda_k(m)$. This yields
\[\big\|\sqrt{-\mathcal{L}_{\Gamma_m}}e^{t\mathcal{L}_{\Gamma_m}}f\big\|_{L^2(X_m,\mu_m)}^2=t^{-1}\sum_{k=0}t\lambda_k(m)e^{-2t\lambda_k(m)}|\left\langle \varphi_k(m),f\right\rangle_{L^2(X_m,\mu_m)}|^2,\]
and since the function $s\mapsto s\:e^{-2s}$ is bounded on $[0,+\infty)$, this implies (\ref{E:analyticbound}). By (\ref{E:analyticbound}),
\begin{align}
\sup_m \mathcal{E}_{\Gamma_m}(w_m(t))&=\sup_m\big\|\sqrt{-\mathcal{L}_{\Gamma_m}}e^{t\mathcal{L}_{\Gamma_m}}J_{0,m}^\ast w_0\big\|^2_{L^2(X_m,\mu_m)}\notag\\
&\leq c\:t^{-1/2}\sup_m\big\|J_{0,m}^\ast w_0\big\|_{L^2(X_m,\mu_m)}^2\notag\\
&\leq c\:t^{-1/2}\left\|w_0\right\|_{L^2(K,\mu)}^2.\notag
\end{align}
\end{proof}

\begin{corollary}\label{C:heatsolconv}
Let $w_0$ and $w_m(t)$ be as in Lemma \ref{L:pre} and let $w(t)$ be the unique solution to (\ref{E:Kirchhoffheat}) for $\mathcal{L}$ in $L^2(K,\mu)$ with initial condition $w_0$.
\begin{enumerate}
\item[(i)] For any $t>0$ we have $\lim_{m\to \infty} H_m(w_m(t)|_{V_m})=w(t)$ strongly in $L^2(K,\mu)$ and weakly in $\mathcal{D}(\mathcal{E})$.
\item[(ii)] If $w_0\in\mathcal{D}(\mathcal{E})$ and $w_0$ is strictly positive on $K$ then for any $t>0$ we also have $\lim_{m\to \infty} H_m(\log w_m(t)|_{V_m})=\log w(t)$ strongly in $L^2(K,\mu)$ and weakly in $\mathcal{D}(\mathcal{E})$.
\end{enumerate}
\end{corollary}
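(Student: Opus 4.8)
The plan is to deduce (i) from Theorem \ref{T:conv} together with the energy bound of Lemma \ref{L:pre}, and then to derive (ii) from (i) by a uniform positivity argument that controls the logarithm. For part (i), first I would compare $J_{0,m} w_m(t)$ with $w(t)$ directly: writing $w_m(t)=e^{t\mathcal{L}_{\Gamma_m}}J_{0,m}^\ast w_0$ and $w(t)=e^{t\mathcal{L}}w_0$, Theorem \ref{T:conv} gives $\left\|w(t)-J_{0,m}e^{t\mathcal{L}_{\Gamma_m}}J_{0,m}^\ast w_0\right\|_{L^2(K,\mu)}=\left\|(e^{t\mathcal{L}}-J_{0,m}e^{t\mathcal{L}_{\Gamma_m}}J_{0,m}^\ast)w_0\right\|_{L^2(K,\mu)}\to 0$, so $J_{0,m}w_m(t)\to w(t)$ strongly in $L^2(K,\mu)$. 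Next I would show that replacing $J_{0,m}w_m(t)$ by the piecewise harmonic interpolation $H_m(w_m(t)|_{V_m})$ does not change the limit; this is where the energy bound enters. By Lemma \ref{L:pre}, $\sup_m\mathcal{E}_{\Gamma_m}(w_m(t))<+\infty$, hence by \eqref{E:metricdominates} also $\sup_m\mathcal{E}_m(w_m(t)|_{V_m})=\sup_m\mathcal{E}(H_m(w_m(t)|_{V_m}))<+\infty$. Together with the $L^2$-bound from Proposition \ref{P:bounded} (which gives $\sup_m\left\|w_m(t)\right\|_{L^2(X_m,\mu_m)}\leq\left\|w_0\right\|_{L^2(K,\mu)}$, and then via \eqref{E:supboundSG} a uniform sup-bound), the sequence $(H_m(w_m(t)|_{V_m}))_m$ is bounded in $\mathcal{D}(\mathcal{E})$, hence has weakly convergent subsequences there, and by compactness of the embedding $\mathcal{D}(\mathcal{E})\hookrightarrow L^2(K,\mu)$ these converge strongly in $L^2(K,\mu)$. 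It then remains to identify every subsequential limit with $w(t)$: one estimates $\left\|H_m(w_m(t)|_{V_m})-J_{0,m}w_m(t)\right\|_{L^2(K,\mu)}$, which I expect to be controlled by $(\operatorname{const})\cdot 2^{-m}\sqrt{\mathcal{E}_{\Gamma_m}(w_m(t))}$ using \eqref{E:resistonedge}/\eqref{E:resistKw} to bound the oscillation of $w_m(t)$ on each cell of diameter $\sim 2^{-m}$ — this tends to $0$ by Lemma \ref{L:pre}. Combined with the established convergence $J_{0,m}w_m(t)\to w(t)$, this forces $H_m(w_m(t)|_{V_m})\to w(t)$ strongly in $L^2(K,\mu)$, and since the sequence is $\mathcal{D}(\mathcal{E})$-bounded with this $L^2$-limit, it converges weakly in $\mathcal{D}(\mathcal{E})$ to $w(t)$ as well.

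For part (ii), I would first record that when $w_0\in\mathcal{D}(\mathcal{E})$ is strictly positive on the compact set $K$, it is bounded above and below by positive constants, say $0<\alpha\leq w_0\leq\beta$. By the Markov property and conservativity of $(e^{t\mathcal{L}_{\Gamma_m}})_{t>0}$, and because $J_{0,m}^\ast$ is an averaging-type operator (visible from the formula in Proposition \ref{P:bounded}), one gets $\alpha\leq w_m(t)\leq\beta$ on $X_m$ uniformly in $m$ and $t\geq 0$; likewise $\alpha\leq w(t)\leq\beta$ on $K$. Hence $\log w_m(t)|_{V_m}$ and $\log w(t)$ take values in the compact interval $[\log\alpha,\log\beta]$ on which $\log$ is Lipschitz with constant $1/\alpha$, so $\left\|H_m(\log w_m(t)|_{V_m})-H_m(w(t)|_{V_m})\right\|_{L^2(K,\mu)}\lesssim \left\|H_m(w_m(t)|_{V_m})-H_m(w(t)|_{V_m})\right\|_{L^2(K,\mu)}$ (using that $H_m$ is, roughly, an interpolation contraction in sup-norm on vertex values, so the pointwise Lipschitz estimate passes to the interpolants). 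Since $H_m(w(t)|_{V_m})=H_m(w(t))\to w(t)$ in $\mathcal{D}(\mathcal{E})$ by \eqref{E:phsplines} and $H_m(w_m(t)|_{V_m})\to w(t)$ by part (i), the right-hand side tends to $0$, giving the $L^2$-convergence $H_m(\log w_m(t)|_{V_m})\to \log w(t)$. For the weak $\mathcal{D}(\mathcal{E})$-convergence, I would again produce a uniform energy bound: by the chain rule / Leibniz estimates for energy measures (e.g. $\mathcal{E}(H_m(\log w_m(t)|_{V_m}))\leq \mathcal{E}_{\Gamma_m}(\log w_m(t))=\left\|(d\log w_m(t))\right\|^2\leq \alpha^{-2}\mathcal{E}_{\Gamma_m}(w_m(t))$, using $(\log w_m)'=w_m^{-1}w_m'$ and $w_m\geq\alpha$), Lemma \ref{L:pre} yields $\sup_m\mathcal{E}(H_m(\log w_m(t)|_{V_m}))<+\infty$, so the $\mathcal{D}(\mathcal{E})$-bounded sequence with $L^2$-limit $\log w(t)$ converges weakly in $\mathcal{D}(\mathcal{E})$ to $\log w(t)$.

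The main obstacle I anticipate is the bookkeeping in the first reduction step of part (i): cleanly quantifying the difference between the two natural "return" operators $J_{0,m}$ and $f\mapsto H_m(f|_{V_m})$ — i.e. between edge-averages weighted by the $\psi_{e,m}$ and vertex-values interpolated piecewise-harmonically — and showing this difference vanishes in $L^2(K,\mu)$ at the rate governed by $\operatorname{diam}(K_w)\sim 2^{-m}$ times the (uniformly bounded) square root of the Dirichlet energy. This needs the resistance estimate \eqref{E:resistKw} to bound oscillations on cells, the identity \eqref{E:sumpsiem} to handle the partition weights, and Lemma \ref{L:pre} for the uniform energy bound; everything else is a soft combination of weak compactness in $\mathcal{D}(\mathcal{E})$, the compact embedding $\mathcal{D}(\mathcal{E})\hookrightarrow L^2(K,\mu)$, and the uniform two-sided bounds on $w_m(t)$ coming from Markovianity and conservativity.
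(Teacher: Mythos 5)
Part (i) of your proposal is correct and takes essentially the paper's route: Theorem \ref{T:conv} gives $J_{0,m}w_m(t)\to w(t)$ in $L^2(K,\mu)$, the comparison of $H_m(w_m(t)|_{V_m})$ with $J_{0,m}w_m(t)$ is exactly the appendix estimate Lemma \ref{L:compare01}\,(i) (proved, as you anticipate, by bounding oscillations on cells via (\ref{E:resistonedge}) and using (\ref{E:sumpsiem}), with the uniform energy bound of Lemma \ref{L:pre} making it vanish), and the weak $\mathcal{D}(\mathcal{E})$-convergence follows from boundedness via (\ref{E:metricdominates}). The only cosmetic difference is that the paper identifies the weak subsequential limits with $w(t)$ by a Banach--Saks argument, whereas you invoke the compact embedding $\mathcal{D}(\mathcal{E})\hookrightarrow L^2(K,\mu)$; both work.

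In part (ii) there is a genuine gap at the reduction step. The inequality $\left\|H_m(\log w_m(t)|_{V_m})-H_m(\log w(t)|_{V_m})\right\|_{L^2(K,\mu)}\lesssim\left\|H_m(w_m(t)|_{V_m})-H_m(w(t)|_{V_m})\right\|_{L^2(K,\mu)}$ does not follow from the Lipschitz property of $\log$ on $[\alpha,\beta]$ together with positivity of $H_m$. Writing $a_p:=w_m(t)(p)-w(t)(p)$, what you actually get pointwise is $|H_m(\log w_m(t)|_{V_m})-H_m(\log w(t)|_{V_m})|\le\alpha^{-1}\sum_{p}|a_p|\,\psi_{p,m}$, while the right-hand side involves $\sum_p a_p\psi_{p,m}$; sign cancellations among the $a_p$, and the fact that for a general atom-free Radon measure $\mu$ the $L^2(K,\mu)$-norm of an $m$-piecewise harmonic function is not comparable, uniformly in $m$, to any fixed weighted $\ell^2$-norm of its vertex values (mass can concentrate near a single vertex of a cell), mean the claimed $L^2$-to-$L^2$ transfer can fail. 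The repair is to work in the supremum norm, which is what the paper does: the uniform bound $\sup_m\mathcal{E}(H_m(w_m(t)|_{V_m}))<+\infty$ makes the functions equicontinuous with respect to the resistance metric by (\ref{E:resistKw}), and the paper runs a cellwise oscillation estimate (using (\ref{E:resistKw}) and the energy bound (\ref{E:Hmbound}), which is precisely your chain-rule bound $\mathcal{E}_m(\log w_m(t))\le\alpha^{-2}\mathcal{E}_m(w_m(t)|_{V_m})$) to obtain convergence uniformly on $K$, hence in $L^2(K,\mu)$; only at that level does the vertexwise Lipschitz estimate for $\log$ pass to the interpolants. Your remaining ingredients for (ii) --- the uniform positive lower bound propagated through $J_{0,m}^\ast$ and the conservative Markovian semigroups, and the concluding weak-compactness step --- agree with the paper.
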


\begin{remark} \mbox{}
\begin{enumerate}
\item[(i)] To obtain Corollary \ref{C:heatsolconv} it would be sufficient to verify convergence of the semigroup operators in the strong sense. However, practically it seems easier to verify generalized norm resolvent convergence in the sense of \cite{P12, PS17, PS17a} than to verify generalized Mosco convergence, \cite[Section 2.5]{KS03}, which would be equivalent to convergence of operators in a suitable strong sense, \cite[Theorem 2.4]{KS03}.
\item[(ii)] Considering $H_m(w_m(t)|_{V_m})$ we implicitely linearize $w_m(t)$ along the edges $E_m$ of $\Gamma_m$ and compare the resulting function to $w(t)$. Doing so, we discard information, but since we rely on approximation by functions from $PH_m$  anyway, (\ref{E:phsplines}), it is natural to proceed this way.
\item[(iii)] For the special case that $\mu$ is the natural self-similar Hausdorff measure on $K$ one can use higher order splines to approximate functions in, roughly speaking, the graph norm of the associated Laplacian, \cite[Theorem 7.5]{StrU00}. See also \cite{Str00} for related results. It will be a future project to try to combine this with a metric graph approximation scheme to obtain (strong) convergence in $\mathcal{D}(\mathcal{E})$ instead of in $L^2(K,\mu)$.
\end{enumerate}
\end{remark}

We prove Corollary \ref{C:heatsolconv}. 

\begin{proof} 
From Lemma \ref{L:compare01} (i) below in the appendix it follows that 
\[\big\|H_m(w_m(t)|_{V_m})-J_{0,m}w_m(t)\big\|_{L^2(K,\mu)}\leq 36\: r^m\:\max_{|w|=m}\mu(K_w)\:\mathcal{E}_{\Gamma_m}(w_m(t)),\]
and combining with  Lemma \ref{L:pre} we obtain
\[\lim_{m\to\infty}\big\|H_m(w_m(t)|_{V_m})-J_{0,m}w_m(t)\big\|_{L^2(K,\mu)}=0.\]
The $L^2(K,\mu)$ limit relation in Corollary \ref{C:heatsolconv} (i) now follows from Theorem \ref{T:conv}. By 
(\ref{E:metricdominates}) and Lemma \ref{L:pre} we also have $\sup_m \mathcal{E}(H_m(w_m(t)|_{V_m}))=\sup_m \mathcal{E}_m(w_m(t))<+\infty$.
Combining, we obtain $\sup_m \left\|H_m(w_m(t)|_{V_m})\right\|_{\mathcal{D}(\mathcal{E})}<+\infty$. Consequently, any fixed subsequence of $(H_m(w_m(t)|_{V_m}))_m$ has a further subsequence converging weakly in $\mathcal{D}(\mathcal{E})$,
we denote the limit by $\tilde{w}\in\mathcal{D}(\mathcal{E})$. By the Banach-Saks theorem, it has a subsequence whose convex combinations converge strongly to $\tilde{w}$ in $\mathcal{D}(\mathcal{E})$, hence also strongly in $L^2(K,\mu)$, which implies that $\tilde{w}$ must equal $w(t)$. This argument also shows that $(H_m(w_m(t)|_{V_m}))_m$ cannot have any other weak accumulation point than $w(t)$, what shows (i). To see (ii) suppose that there exists $\gamma>0$ such that $\inf_{x\in K}w_0(x)\geq \gamma$. As $(e^{t\mathcal{L}})_{t>0}$ is conservative and $w(t)\in\mathcal{D}(\mathcal{E})$ continuous we also have $\inf_{x\in K}w(t,x)\geq \gamma$ for any $t\geq 0$. The definition of the operators $J_{0,m}^\ast$, the conservativity of the semigroups $(e^{t\mathcal{L}_{\Gamma_m}})_{t>0}$ and the continuity of the functions $w_m(t)\in W^{1,2}(X_m,\mu_m)$ imply $\inf_{x\in K}w(t,x)\geq \gamma$ for any $m$ and any $t\geq 0$. These lower bounds imply
\begin{equation}\label{E:Hmbound}
\mathcal{E}(H_m(\log w_m(t)|_{V_m})=\mathcal{E}_m(\log w_m(t))\leq \gamma^{-2}\mathcal{E}_m(w_m(t)|_{V_m})\leq \sup_m\gamma^{-2}\mathcal{E}_{\Gamma_m}(w_m(t))
\end{equation}
Now let $\varepsilon>0$ be arbitrary and $m$ large enough so that 
\[r^m\leq \varepsilon\gamma\:\big\lbrace \sup_m\mathcal{E}_{\Gamma_m}(w(t))^{1/2}+\mathcal{E}(w(t))^{1/2}\big\rbrace^{-1}.\]
For any word $w$ with $|w|=m$ and any $x\in K_w$ estimate (\ref{E:resistKw}) then yields
\begin{align}
|H_m(\log & \:w_m(t)|_{V_m})(x)-\log w(t)(x)|\notag\\
&\leq |H_m(\log w_m(t)|_{V_m})(x)-H_m(\log w_m(t)|_{V_m})(p)| +|\log w(t)(x)-\log w(t)(p)|\notag\\
&\leq \diam(F_wK)\left\lbrace \mathcal{E}(H_m(\log w_m(t)|_{V_m}))^{1/2}+\mathcal{E}(w(t))^{1/2}\right\rbrace\notag\\
&\leq \varepsilon,\notag
\end{align}
where $p$ is a point from $V_m\cap K_w$. We have used (\ref{E:Hmbound}) and that $H_m(\log w_m(t)|_{V_m})(p)=\log w_m(t)(p)$ for all $p\in V_m$. As a consequence,
\[\left\|H_m(\log w_m(t)|_{V_m})-\log w(t)\right\|_{L^2(K,\mu)}^2=\sum_{|w|=m}\int_{K_w}|H_m(\log w_m(t)|_{V_m})-\log w(t)|^2d\mu\leq \varepsilon^2 \]
whenever $m$ is sufficiently large. Using (\ref{E:Hmbound}) we can proceed similarly as in (i) to see the weak convergence in $\mathcal{D}(\mathcal{E})$.
\end{proof}

\subsection{Approximation of Cole-Hopf solutions to the Burgers equation}

To formulate an approximation result for solutions to (\ref{E:vectorCauchy}) on $K$ by corresponding solutions to  (\ref{E:vectorCauchySG}) on the metric graphs $\Gamma_m$ we define the operators $H_{\Gamma_m}$ and $H_m$
on gradient fields. Since for any $c\in\mathbb{R}$ we have $H_{\Gamma_m}(f+c)=H_{\Gamma_m}(f)+c$, $f\in \dot{W}^{1,2}(X_m)$, and $H_m(f+c)=H_m(f)+c$, $f\in \mathcal{D}(\mathcal{E})$, these operators may be interpreted as linear operators on $\dot{W}(X_m)/\mathbb{R}$ and $\mathcal{D}(\mathcal{E})/\mathbb{R}$, respectively. According to Remarks \ref{R:isoiso} and \ref{R:isoisoSG} these spaces are isometrically isomorphic to $\im d$ and $\im \partial$, respectively, so that we obtain well-defined operators $H_{\Gamma_m}:\im d\to d(EL_m)$ and $H_m: \im \partial \to \partial (PH_m)$ by setting
\[H_{\Gamma_m}(df):=dH_{\Gamma_m}(f) \quad \text{ and } \quad H_m(\partial f):=\partial H_m(f).\] 
Moreover, for any $m$ we can define a vector space isomorphism $\Phi_m:EL_m\to PH_m$ by $\Phi_m(f):=H_m(f|_{V_m})$, its inverse $\Phi_m^{-1}$ is given by $f|_{X_m}$. It satisfies $\Phi_m(f+c)=\Phi_m(f)+c$, $c\in\mathbb{R}$, and therefore also induces a well defined linear map $\Phi_m: d(EL_m)\to \partial (PH_m)$ by
\[\Phi_m(df):=\partial \Phi_m(f), \quad f\in EL_m.\]
Since $\left\|\partial \Phi_m(f)\right\|_{\mathcal{H}}^2=\mathcal{E}(H_m(f|_{V_m}))=\mathcal{E}_{\Gamma_m}(f)=\left\|df\right\|_{L^2(X_m,\mu_m)}^2$ for any $f\in \dot{W}^{1,2}(X_m)$, the map $\Phi_m$ is seen to be an isometric isomorphism. 

For the solutions to the Burgers equations we now obtain the following result.
\begin{theorem}
Assume $u_0=\partial h_0$ with $h_0\in\mathcal{D}(\mathcal{E})$. Let $u(t)$ denote the unique solution to (\ref{E:vectorCauchySG}) with initial condition $u_0$ and for any $m\geq 1$ let $u_m(t)$ denote the unique solution to (\ref{E:vectorCauchy}) with initial condition $-2d\log J_{0,m}^\ast e^{-h_0/2}$. Then we have
\begin{equation}\label{E:weaklimitu}
\lim_{m\to\infty} \left\langle \Phi_m\circ H_{\Gamma_m}(u_m(t))-u(t),v\right\rangle_{\mathcal{H}}=0   
\end{equation}
for any $t\geq 0$ and $v\in\mathcal{H}$.
\end{theorem}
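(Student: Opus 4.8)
The plan is to reduce the convergence statement for the Burgers solutions to the already-established convergence statement for the heat equations, using the Cole-Hopf transform on both sides. By Theorem~\ref{T:exSG} the solution on $K$ is $u(t)=-2\partial\log w(t)$ where $w(t)=e^{t\mathcal{L}}w_0$ with $w_0=e^{-h_0/2}$, and by Theorem~\ref{T:exmetric} the solution on $\Gamma_m$ is $u_m(t)=-2d\log w_m(t)$ where $w_m(t)=e^{t\mathcal{L}_{\Gamma_m}}J_{0,m}^\ast w_0$ is exactly the object studied in Lemma~\ref{L:pre} and Corollary~\ref{C:heatsolconv}. So the first step is to unwind the operators: $H_{\Gamma_m}(u_m(t))=H_{\Gamma_m}(d(-2\log w_m(t)))=d\,H_{\Gamma_m}(-2\log w_m(t))=-2\,d\,H_{\Gamma_m}(\log w_m(t))$, and then $\Phi_m\circ H_{\Gamma_m}(u_m(t))=\partial\Phi_m(-2\log w_m(t))=-2\,\partial H_m(\log w_m(t)|_{V_m})$, using the definitions of $\Phi_m$ on gradient fields and that $\Phi_m(f)=H_m(f|_{V_m})$. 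Meanwhile $u(t)=-2\partial\log w(t)$.

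Thus the left-hand side of \eqref{E:weaklimitu} equals $-2\big\langle \partial\big(H_m(\log w_m(t)|_{V_m})-\log w(t)\big),v\big\rangle_{\mathcal{H}}$. The second step is to exploit the Helmholtz--Hodge decomposition \eqref{E:Helmholtz}: write $v=\partial\varphi+\eta$ with $\varphi\in\mathcal{D}(\mathcal{E})$ and $\eta\in\ker\partial^\ast$; then the inner product against $\eta$ vanishes, and the inner product against $\partial\varphi$ equals $\mathcal{E}\big(H_m(\log w_m(t)|_{V_m})-\log w(t),\,\varphi\big)$ by the isometry $\|\partial f\|_{\mathcal{H}}^2=\mathcal{E}(f)$ and polarization (Remark~\ref{R:isoisoSG}). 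Now invoke Corollary~\ref{C:heatsolconv}(ii): since $w_0=e^{-h_0/2}\in\mathcal{D}(\mathcal{E})$ (as $\mathcal{D}(\mathcal{E})$ is an algebra closed under composition with smooth functions bounded below, and $h_0\in\mathcal{D}(\mathcal{E})$ is bounded) and is strictly positive on $K$, we have $H_m(\log w_m(t)|_{V_m})\to\log w(t)$ weakly in $\mathcal{D}(\mathcal{E})$ for each $t>0$. Weak convergence in $\mathcal{D}(\mathcal{E})$ in particular gives $\mathcal{E}\big(H_m(\log w_m(t)|_{V_m})-\log w(t),\varphi\big)\to 0$ for every fixed $\varphi\in\mathcal{D}(\mathcal{E})$, which is exactly what is needed. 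For $t=0$ one checks directly that the initial data match in the appropriate sense, or notes continuity at $0$.

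The main technical point, rather than a deep obstacle, is verifying carefully that the algebraic identities relating $H_{\Gamma_m}$, $\Phi_m$, $d$, $\partial$, and restriction to $V_m$ all compose correctly — in particular that $H_{\Gamma_m}(\log w_m(t))$ makes sense (one needs $\log w_m(t)\in\dot W^{1,2}(X_m)$, which follows from the strict positivity lower bound $\inf_{X_m}w_m(t)\geq\gamma>0$ established in the proof of Corollary~\ref{C:heatsolconv}(ii) together with $w_m(t)\in W^{1,2}(X_m,\mu_m)$ and the fact that this space is an algebra by \eqref{E:Linftybound}), and that $\Phi_m\circ H_{\Gamma_m}$ applied to a gradient field $df$ indeed produces $\partial H_m(f|_{V_m})$. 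A second point worth a line is that the weak-$\mathcal{D}(\mathcal{E})$ convergence in Corollary~\ref{C:heatsolconv}(ii) is genuinely needed (rather than merely $L^2$ convergence), since $v\in\mathcal{H}$ is arbitrary and its potential $\varphi$ need not lie in any smaller space; but this is precisely what Corollary~\ref{C:heatsolconv}(ii) delivers, so no new estimate is required. I would also remark that for $t>0$ one could alternatively pass through $\|\partial(\cdot)\|_{\mathcal{H}}$-boundedness (Lemma~\ref{L:pre} plus \eqref{E:metricdominates}) to extract the weak limit via Banach--Saks exactly as in the proof of Corollary~\ref{C:heatsolconv}, but since that corollary is already available, the cleanest route is simply to quote it.
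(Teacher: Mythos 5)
Your proposal is correct and follows essentially the same route as the paper: unwind $\Phi_m\circ H_{\Gamma_m}(u_m(t))$ via the Cole--Hopf representation to $-2\partial H_m(\log w_m(t)|_{V_m})$, invoke Corollary \ref{C:heatsolconv}(ii) for weak convergence in $\mathcal{D}(\mathcal{E})$ tested against $\partial\varphi$, and then use the Helmholtz--Hodge decomposition (\ref{E:Helmholtz}) together with the fact that both fields lie in $\im\partial$ to pass to arbitrary $v\in\mathcal{H}$. The additional checks you flag (that $w_0=e^{-h_0/2}\in\mathcal{D}(\mathcal{E})$ is strictly positive, and that $\log w_m(t)\in\dot W^{1,2}(X_m)$) are sensible and consistent with what the paper implicitly relies on.
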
 

\begin{proof}
For any $m\geq 1$ we have 
\begin{multline}
\Phi_m\circ H_{\Gamma_m}(u_m(t))=-2\Phi_m (d(H_{\Gamma_m}(\log w_m(t)))\notag\\
=-2\partial\Phi_m(H_{\Gamma_m}(\log(w_m(t)))=-2\partial H_m(\log(w_m(t)|_{V_m}),
\end{multline}
and according to Corollary \ref{C:heatsolconv} (ii),
\[\lim_{m\to\infty}\left\langle \partial H_m(\log w_m(t)|_{V_m})-\partial \log w(t),\partial \varphi\right\rangle_{\mathcal{H}}=\lim_{m\to\infty}\mathcal{E}(H_m(\log w_m(t)|_{V_m})-\log w(t),\varphi)=0\]
for any $\varphi\in\mathcal{D}(\mathcal{E})$. Since $\Phi_m\circ H_{\Gamma_m}(u_m(t))$ and $u(t)$ are elements of $\im \partial$, it follows from (\ref{E:Helmholtz}) that we may use general test vector fields $v\in\mathcal{H}$ in place of $\partial\varphi$.
\end{proof}

\begin{remark}
The space $\mathcal{H}$ can be rewritten as the closure of the union of an increasing sequence of finite dimensional subspaces, \cite[Definition 5.2, Lemmas 5.3 and 5.5 and Theorem 5.6]{IRT12}. Then (\ref{E:weaklimitu}) can also  be expressed using these subspaces.
\end{remark}

%
%


\section*{Appendix}

\subsection{Basic estimates}
We sketch a proof for Corollary \ref{C:rudiestmetric}, Corollary \ref{C:rudiestSG} follows similarly. Since $(\mathcal{E}_\Gamma,\dot{W}^{1,2}(X_\Gamma))$ is a resistance form (or, alternatively, using (\ref{E:resistonedge})), we see that there is a constant $c>0$ such that for any $f\in \dot{W}^{1,2}(X_\Gamma)$ with $f(s_0)=0$ for some $s_0\in X_\Gamma$ we have $\left\|f\right\|_{\sup}\leq c\:\mathcal{E}_{\Gamma}(f)^{1/2}$. A second fact we use is that there is a constant $c>0$ such that for any $C^2$-function $F:\mathbb{R}\to\mathbb{R}$ with bounded derivatives and any $f,g\in W^{1,2}(X_\Gamma,\mu_\Gamma)$ we have 
\begin{equation}\label{E:compest}
\left\|F(f)-F(g)\right\|_{W^{1,2}(X_\Gamma,\mu_\Gamma)}\leq c\:(\left\|F'\right\|_{\sup}+\left\|F''\right\|_{\sup})(\left\|f\right\|_{W^{1,2}(X_\Gamma,\mu_\Gamma)}+1)\left\|f-g\right\|_{W^{1,2}(X_\Gamma,\mu_\Gamma)},
\end{equation} 
as follows for instance from \cite[Proposition 3.1]{HZ12} and its proof, combined with (\ref{E:Linftybound}).
\begin{proof}
From spectral theory it is easy to see that $(e^{t\mathcal{L}_\Gamma})_{t>0}$ is contractive also on $W^{1,2}(X_\Gamma,\mu_\Gamma)$. Therefore, using the estimates $\left\|h_0\right\|_{L^2(X_\Gamma,\mu_\Gamma)}\leq \left\|h_0\right\|_{\sup}\mu(X_\Gamma)^{1/2}$ and $\left\|h_0\right\|_{\sup}\leq c\:\mathcal{E}(h_0)^{1/2}$,
\begin{multline}
\left\|u(t)\right\|_{L^2(X_\Gamma,\mu_\Gamma)}=2\mathcal{E}_\Gamma(\log w(t))^{1/2}\notag\\
\leq 2e^{\left\|h_0\right\|_{\sup}/2}\mathcal{E}_\Gamma(w(t))^{1/2}\leq 2e^{\left\|h_0\right\|_{\sup}/2}\left\|w_0\right\|_{W^{1,2}(X_\Gamma,\mu_\Gamma)}\leq c\:e^{\left\|h_0\right\|_{\sup}}\mathcal{E}_\Gamma (h_0)^{1/2},
\end{multline}
what shows (i).
To see (ii) let us write $\check{M}_0:=\max(\left\|h_0\right\|_{\sup}, \big\|\widetilde{h}_0\big\|_{\sup})$. Allowing constants to vary and using (\ref{E:compest}), 
\begin{align}
\left\|u(t)-\widetilde{u}(t)\right\|_{L^2(X_\Gamma,\mu_\Gamma)}&=2\mathcal{E}_\Gamma(\log w(t)-\log \widetilde{w}(t))^{1/2}\notag\\
&\leq c\:e^{\check{M}_0}(\mathcal{E}(w(t))^{1/2}+1)\:\mathcal{E}(w(t)-\widetilde{w}(t))^{1/2}\notag\\
&\leq c\:e^{\check{M}_0}(\mathcal{E}(w_0)^{1/2}+1)\:\mathcal{E}(w_0-\widetilde{w}_0)^{1/2}\notag\\
&\leq c\:e^{c\check{M}_0} (\mathcal{E}(h_0)^{1/2}+1)^2\:\mathcal{E}(h_0-\widetilde{h}_0)^{1/2},\notag
\end{align}
what entails (ii), note that
$\check{M}_0\leq c\:(\left\|u_0\right\|_{L^2(X_\Gamma,\mu_\Gamma)}+\big\|\widetilde{u}_0\big\|_{L^2(X_\Gamma,\mu_\Gamma)})$.
\end{proof}

\subsection{Generalized norm resolvent convergence}

The following statements are versions of results established earlier in \cite{PS17, PS17a}. We include their proofs to make the present article self-contained.

\begin{lemma}\label{L:J0lemma}
For any $f\in W^{1,2}(X_m,\mu_m)$ we have 
\[\left\|f-J_{0,m}^\ast J_{0,m} f\right\|_{L^2(X_m,\mu_m}^2\leq 54\:r^m(\max_{|w|=m}\mu(K_w))\mathcal{E}_{\Gamma_m}(f).\]
\end{lemma}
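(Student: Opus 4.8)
The claim is essentially a one-edge-at-a-time estimate: on each edge $e\in E_m$ of length $2^{-m}$ I need to control how far $f_e$ deviates from a suitable constant, namely the quantity produced on $e$ by $J_{0,m}^\ast J_{0,m}f$. First I would compute $J_{0,m}^\ast J_{0,m}f$ explicitly on an edge $e$. By the formula for $J_{0,m}^\ast$ in Proposition \ref{P:bounded}, $(J_{0,m}^\ast J_{0,m}f)_e$ is the constant
\[
a_e:=\frac{\big\langle J_{0,m}f,\psi_{e,m}\big\rangle_{L^2(K,\mu)}}{\int_K\psi_{e,m}\,d\mu}
=\frac{1}{\int_K\psi_{e,m}\,d\mu}\sum_{e'\in E_m}\overline{f_{e'}}\int_K\psi_{e,m}\psi_{e',m}\,d\mu,
\]
a convex-type average of the edge averages $\overline{f_{e'}}$ (nonnegativity of $\psi_{e,m}$ and $\sum_{e'}\int_K\psi_{e,m}\psi_{e',m}\,d\mu=\int_K\psi_{e,m}\,d\mu$ by (\ref{E:sumpsiem}) give that the weights sum to one). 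So $a_e$ is an average of the $\overline{f_{e'}}$ over those $e'$ whose support of $\psi_{e',m}$ meets that of $\psi_{e,m}$, i.e. over edges $e'$ that share a vertex with $e$ or lie in an adjacent cell $K_w$, $|w|=m$.

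\textbf{Key steps.} The triangle inequality splits $\|f-J_{0,m}^\ast J_{0,m}f\|^2_{L^2(X_m,\mu_m)}=\sum_{e\in E_m}\int_e|f_e-a_e|^2\,d\mu_m$ into two contributions: the oscillation of $f_e$ around its own average $\overline{f_e}$, and $|\overline{f_e}-a_e|^2$. For the first, a Poincaré/Wirtinger inequality on the interval $(0,2^{-m})$ gives $\int_0^{2^{-m}}|f_e(s)-\overline{f_e}|^2\,ds\le C\,2^{-2m}\int_0^{2^{-m}}(f_e')^2\,ds$; multiplying by $c_e\le 2^m\max_{|w|=m}\mu(K_w)$ and recalling $\mathcal{E}_{\Gamma_m}(f)=2^{-m}r^{-m}\sum_e\int_0^{2^{-m}}(f_e')^2\,ds$ yields a bound of the stated form with a constant times $r^m(\max_{|w|=m}\mu(K_w))\mathcal{E}_{\Gamma_m}(f)$. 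For the second, since $a_e$ is a weighted average of the $\overline{f_{e'}}$ over a uniformly bounded number of neighbouring edges, $|\overline{f_e}-a_e|^2\le\sum_{e'}(\text{weight})\,|\overline{f_e}-\overline{f_{e'}}|^2$, and each $|\overline{f_e}-\overline{f_{e'}}|$ is controlled by the values of $f$ at the shared (or cell-adjacent) vertices, hence by the energy of $f$ on the few cells $K_w$ containing $e$ and $e'$ via the resistance estimate (\ref{E:resistKw}) (or directly (\ref{E:resistonedge})), together with the oscillation bound to pass from $\overline{f_{e'}}$ to a vertex value. Summing over $e$ and using that each cell $K_w$ is counted a bounded number of times turns this into another constant times $r^m(\max_{|w|=m}\mu(K_w))\mathcal{E}_{\Gamma_m}(f)$.

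\textbf{Main obstacle.} The genuinely delicate part is the combinatorial bookkeeping for the term $|\overline{f_e}-a_e|^2$: one must identify precisely which edges $e'$ carry nonzero weight in $a_e$ (those with $\operatorname{supp}\psi_{e',m}\cap\operatorname{supp}\psi_{e,m}\ne\emptyset$), bound the number of such $e'$ and the number of $m$-cells linking $e$ to $e'$ by absolute constants independent of $m$, and chain the resistance estimates along at most one or two cells so that $|\overline{f_e}-\overline{f_{e'}}|^2\lesssim r^m\sum_{w}\mathcal{E}_{K_w}(f)$ over the relevant $w$. Tracking the numerical constant through this — to land on $54$ rather than merely ``some constant'' — requires care with the worst-case cell overlaps on the gasket and with the factor $\tfrac12$ in (\ref{E:sumpsiem}) and in $\overline g(x,y)$; everything else is a routine Poincaré inequality on an interval plus the orthogonal (here just triangle-inequality) decomposition, and the rescalings $2^{-m}$, $r^{-m}$, $c_e$ cancel exactly as in Lemma \ref{L:pre}. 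I would expect the bound to be proved by following verbatim the corresponding step in \cite[proof of Theorem 1.1]{PS17}, with the cell-count constants on $K$ inserted.
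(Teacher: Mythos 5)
Your plan is correct and contains no genuine gap; it would yield the stated bound with some explicit constant. It is, however, organized differently from the paper's proof, so a comparison is worthwhile. You split $f_e-a_e$ into the oscillation $f_e-\overline{f_e}$ (controlled by Poincar\'e--Wirtinger) plus the mean discrepancy $\overline{f_e}-a_e$ (controlled by Jensen over neighbouring edge averages and chaining). The paper instead uses the partition-of-unity identity behind (\ref{E:sumpsiem}) to represent the difference pointwise as a single weighted average,
\[
f(s)-J_{0,m}^\ast J_{0,m}f(s)=\sum_{e\in E_m}\mathbf{1}_e(s)\sum_{e'\in E_m}2^m\int_0^{2^{-m}}\bigl(f_e(s)-f_{e'}(s')\bigr)\,ds'\;\frac{\left\langle\psi_{e,m},\psi_{e',m}\right\rangle_{L^2(K,\mu)}}{\int_K\psi_{e,m}\,d\mu},
\]
applies Cauchy--Schwarz once, and then bounds $(f_e(s)-f_{e'}(s'))^2$ by chaining the edge-wise resistance estimate (\ref{E:resistonedge}) through at most two shared vertices: the $e'=e$ term plays the role of your oscillation term (via the sup-norm bound $(f_e(s)-f_e(s'))^2\le 2^{-m}\mathcal{E}_e(f_e)$ rather than Wirtinger), and the $e'\ne e$ terms play the role of your $|\overline{f_e}-a_e|^2$ term. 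The bookkeeping you flag as the main obstacle is exactly what the paper carries out: the edges with $\left\langle\psi_{e,m},\psi_{e',m}\right\rangle\ne 0$ are partitioned into $\{e\}$, the set $U_1(e)$ of edges sharing a vertex with $e$, and the set $U_2(e)$ of edges reachable through one intermediate edge, with chaining constants $1$, $4$ and $9$ combining to the factor $36$, while the weight sum $\sum_{\widetilde e}\left\langle\psi_{\widetilde e,m},\psi_{e,m}\right\rangle^2/\int_K\psi_{e,m}\,d\mu\le\frac32\max_{|w|=m}\mu(K_w)$ supplies the remaining $\frac32$, giving $54=36\cdot\frac32$. Only the metric-graph estimate (\ref{E:resistonedge}) is needed for the chaining; the gasket-side estimate (\ref{E:resistKw}) you mention as an alternative does not enter. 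Your triangle-inequality split would cost an extra multiplicative factor and hence a larger final constant, but since you explicitly do not claim to recover $54$, your argument is consistent as stated.
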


Given two edges $e,e'\in E_m$ we write $e\sim e'$ if $e\neq e'$ and $e$ and $e'$ have a common vertex.

\begin{proof}
Since $f(s)=\sum_{e\in E_m}\mathbf{1}_e(s)f(s)$ for $\mu_m$-a.e. $s\in X_m$ we have 
\[f(s)-J_{0,m}^\ast J_{0,m} f(s)=\sum_{e\in E_m}\mathbf{1}_e(s)\sum_{e'\in E_m}2^m\int_0^{2^{-m}}(f_e(s)-f_{e'}(s'))ds'\frac{\left\langle \psi_{e,m},\psi_{e',m}\right\rangle_{L^2(K,\mu)}}{\int_K\psi_{e,m}d\mu}\]
for $\mu_m$-a.e. $s$ and therefore
\begin{align}
\left\|f\right. & \left. -J_{0,m}^\ast J_{0,m}\right\|_{L^2(X_m,\mu_m)}^2\notag\\
&=\sum_{e\in E_m}2^m\int_0^{2^{-m}}\left(\sum_{e'\in E_m} 2^m\int_0^{2^{-m}}(f_e(s)-f_{e'}(s'))ds'\frac{\left\langle \psi_{e,m},\psi_{e',m}\right\rangle_{L^2(K,\mu)}}{\int_K\psi_{e,m}d\mu}\right)^2ds\left(\int_K \psi_{e,m}d\mu\right)\notag\\
&\leq \sum_{e\in E_m}\frac{2^m}{\int_K\psi_{e,m}d\mu}\int_0^{2^{-m}}\sum_{e'\in U(e)}2^m\int_0^{2^{-m}}(f_e(s)-f_{e'}(s'))^2ds'ds\sum_{\widetilde{e}\in E_m}\left\langle \psi_{\widetilde{e},m},\psi_{e,m}\right\rangle_{L^2(K,\mu)}^2,\notag
\end{align}
where $U_1(e)$ is the set of all $e'\in E_m$ such that $e'\sim e$, $U_2(e)$ is the set of all $e'\in E_m$ such that $e'\neq e$ and there exists $e''\in E_m$, such that $e''\sim e'$ and $e''\sim e$,
and
\[U(e)=\left\lbrace e\right\rbrace\cup U_1(e)\cup U_2(e).\]
Note that for $e'\in E_m\setminus U(e)$ we have $\left\langle \psi_{e',m},\psi_{e,m}\right\rangle_{L^2(K,\mu)}=0$. In case that $e'=e$ estimate (\ref{E:resistonedge}) yields
\[(f_e(s)-f_{e'}(s'))^2\leq 2^{-m}\mathcal{E}_e(f_e).\]
If $e'\in U_1(e)$ and $p$ is the common vertex of $e'$ and $e$ then, using the triangle inequality,
\[(f_e(s)-f_{e'}(s'))^2\leq  4\cdot \left\lbrace (f_e(s)-f_e(p))^2+(f_{e'}(p)-f_{e'}(s'))^2\right\rbrace \leq 4\cdot2^{-m} \left\lbrace \mathcal{E}_e(f_e)+\mathcal{E}_{e'}(f_{e'}) \right\rbrace. \]
For $e'\in U_2(e)$ with (unique) $e''$ such that $e''\sim e'$ and $e''\sim e'$ we similarly obtain 
\[(f_e(s)-f_{e'}(s'))^2\leq 9\cdot 2^{-m}\left\lbrace \mathcal{E}_e(f_e)+\mathcal{E}_{e''}(f_{e''})+\mathcal{E}_{e'}(f_{e'})\right\rbrace.\]
Inserting into the above yields
\begin{align}
\left\|f\right. & \left. -J_{0,m}^\ast J_{0,m}\right\|_{L^2(X_m,\mu_m)}^2\notag\\
&\leq 2^{-m}\sum_{e\in E_m} \frac{1}{\int_K \psi_{e,m}d\mu} \sum_{\widetilde{e}\in E_m} \left\langle \psi_{\widetilde{e},m},\psi_{e,m}\right\rangle_{L^2(K,\mu)}^2 \bigg[ \mathcal{E}_e(f_e)+4\sum_{e'\in U_1(e)} \left\lbrace \mathcal{E}_e(f_e)+\mathcal{E}_{e'}(f_{e'})\right\rbrace \notag\\
&\quad  + 9\sum_{e'\in U_2(e)} \left\lbrace \mathcal{E}_e(f_e)+\mathcal{E}_{e''}(f_{e''})+\mathcal{E}_{e'}(f_{e'})\right\rbrace \bigg],\notag
\end{align}
where in the last sum for each fixed $e$ and $e'$ the edge $e''$ is one possible connecting edge. This is less or equal 
\[36\left(\max_{e\in E_m}\frac{1}{\int_K\psi_{e,m}d\mu}\sum_{\widetilde{e}\in E_m}\left\langle \psi_{\widetilde{e},m},\psi_{e,m}\right\rangle_{L^2(K,\mu)}^2\right)r^m\mathcal{E}_{\Gamma_m}(f).\]
Using (\ref{E:sumpsiem}) it then follows that the term in brackets is bounded by $\frac32\max_{|w|=m}\mu(K_w)$.
\end{proof}

We define operators $J_{1,m}:W^{1,2}(X_m,\mu_m)\to \mathcal{D}(\mathcal{E})$ and $\widetilde{J}_{1,m}:\mathcal{D}(\mathcal{E})\to W^{1,2}(X_m,\mu_m)$ by 
\[J_{1,m} f:= H_m (f|_{V_m}), \quad f\in W^{1,2}(X_m,\mu_m),\quad\text{and}\quad \widetilde{J}_{1,m} u:=H_m(u)|_{X_m},\quad u\in\mathcal{D}(\mathcal{E}).\]

\begin{lemma}\label{L:compare01}\mbox{}
\begin{enumerate}
\item[(i)] For any $f\in W^{1,2}(X_m,\mu_m)$ we have 
\[\left\|J_{1,m} f-J_{0,m} f\right\|_{L^2(K,\mu)}\leq 36\cdot r^m (\max_{|w|=m}\mu(K_w))\:\mathcal{E}_{\Gamma_m}(f).\]
\item[(ii)] For any $u\in \mathcal{D}(\mathcal{E})$ we have 
\[\|\widetilde{J}_{1,m} u-J_{0,m}^\ast u\|_{L^2(X_m,\mu_m)}\leq \frac92 r^m(\max_{|w|=m}\mu(K_w))\:\mathcal{E}(u).\]
\end{enumerate}
\end{lemma}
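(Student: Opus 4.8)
The plan is to prove both parts by the cellwise scheme already used for Lemma~\ref{L:J0lemma}: on each $m$-cell $K_w$, $|w|=m$, bound the difference in question in supremum norm by an oscillation of the given function, control that oscillation by the resistance estimate (\ref{E:resistonedge}) on edges (for (i)) and (\ref{E:resistKw}) on cells (for (ii)), integrate, and sum over $|w|=m$. Throughout one uses the identity (\ref{E:sumpsiem}) and the observation that $c(p)\,\#\{e\in E_m:e\sim p\}=1$ for every $p\in V_m$ (degree four with $c(p)=\tfrac14$ in the interior, degree two with $c(p)=\tfrac12$ on $V_0$).

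For part (i) I would first rewrite $J_{1,m}f-J_{0,m}f$ entirely through the $\psi_{p,m}$. Substituting $\psi_{e,m}=c(i(e))\psi_{i(e),m}+c(j(e))\psi_{j(e),m}$ into $J_{0,m}f=\sum_{e}\overline{f_e}\,\psi_{e,m}$ and regrouping by vertex, and writing $J_{1,m}f=\sum_{p}f(p)\psi_{p,m}=\sum_{p}\psi_{p,m}\,c(p)\sum_{e\sim p}f(p)$, one finds
\[J_{1,m}f-J_{0,m}f=\sum_{p\in V_m}\psi_{p,m}\,c(p)\sum_{e\sim p}\big(f(p)-\overline{f_e}\big).\]
On $K_w$ only the three corners $p\in V_m\cap K_w$ contribute, where $0\le\psi_{p,m}\le1$ and $\sum_{p\in V_m\cap K_w}\psi_{p,m}\equiv1$; together with $c(p)\sum_{e\sim p}1=1$ this yields $|J_{1,m}f-J_{0,m}f|\le\max_{p\in V_m\cap K_w}\max_{e\sim p}|f(p)-\overline{f_e}|$ on $K_w$. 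Since $f(p)$ is an endpoint value of $f_e$ and $\overline{f_e}$ its mean over $e$, one has $|f(p)-\overline{f_e}|\le 2^m\int_0^{2^{-m}}|f_e(p)-f_e(s)|\,ds\le\big(2^{-m}\mathcal{E}_e(f_e)\big)^{1/2}$ by (\ref{E:resistonedge}). Squaring, integrating against $\mu$ over $K_w$, bounding the maximum by the corresponding sum, summing over $|w|=m$ — each edge $e$ being counted at most four times, since each of its at most two endpoints lies in at most two cells $K_w$ — and inserting $\sum_{e}\mathcal{E}_e(f_e)=2^mr^m\mathcal{E}_{\Gamma_m}(f)$ then gives an estimate of the form asserted in (i).

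For part (ii) I fix $e\in E_m$ and let $K_w$ be the unique $m$-cell containing $e$. On $e$ the function $\widetilde{J}_{1,m}u=H_m(u)|_{X_m}$ is the affine interpolation of $u(i(e))$ and $u(j(e))$, hence lies in the convex hull of two values of $u$ on $K_w$, while $J_{0,m}^\ast u$ is the constant $\int_K u\,d\rho_e$, the average of $u$ against the probability measure $\rho_e:=\psi_{e,m}\,d\mu\big/\int_K\psi_{e,m}\,d\mu$. Because $\psi_{e,m}=c(i(e))\psi_{i(e),m}+c(j(e))\psi_{j(e),m}$, the support $S_e$ of $\rho_e$ is the union of the (at most three) $m$-cells meeting $i(e)$ or $j(e)$, so $|\widetilde{J}_{1,m}u-J_{0,m}^\ast u|\le\sup_{x,y\in S_e}|u(x)-u(y)|$ on $e$, and by (\ref{E:resistKw}) together with the triangle inequality the right-hand side is at most $\big(C\,r^m\sum_{K_{w'}\subset S_e}\mathcal{E}_{K_{w'}}(u)\big)^{1/2}$ for a universal constant $C$. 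Finally I would compute $\|\widetilde{J}_{1,m}u-J_{0,m}^\ast u\|_{L^2(X_m,\mu_m)}^2=\sum_{e}c_e\int_0^{2^{-m}}|\widetilde{J}_{1,m}u-J_{0,m}^\ast u|^2\,ds$, use $c_e=2^m\int_K\psi_{e,m}\,d\mu\le 2^{m+1}\max_{|w|=m}\mu(K_w)$ (from $\psi_{e,m}\le\tfrac12(\psi_{i(e),m}+\psi_{j(e),m})$ and $0\le\psi_{p,m}\le1$ supported on at most two cells), and sum, each cell $K_{w'}$ being counted only boundedly often, to reach a bound of the form $C\,r^m(\max_{|w|=m}\mu(K_w))\mathcal{E}(u)$.

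The estimates themselves are routine; the step needing genuine care is the combinatorial bookkeeping of the supports of $\psi_{p,m}$ and $\psi_{e,m}$ — most notably, for part (ii), that $\psi_{e,m}$ is \emph{not} supported on the single cell $K_w$ but leaks into the neighbouring cells that share a vertex with $e$, so that the relevant oscillation of $u$ is taken over a union of up to three cells rather than over $K_w$ alone. Carrying this counting through precisely is what fixes the explicit constants $36$ and $\tfrac92$.
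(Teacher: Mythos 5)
Your proposal follows essentially the same route as the paper in both parts. For (i) the regrouping $J_{1,m}f-J_{0,m}f=\sum_{p\in V_m}c(p)\psi_{p,m}\sum_{e\sim p}\bigl(f(p)-\overline{f_e}\bigr)$ is exactly the paper's first display, and your subsequent use of $|f(p)-\overline{f_e}|\le\bigl(2^{-m}\mathcal{E}_e(f_e)\bigr)^{1/2}$, cell-by-cell integration and the multiplicity count (each edge hit at most $4$ times) is the paper's argument with the max in place of Cauchy--Schwarz; it in fact yields the constant $4$ rather than $36$, so (i) is fine. (Note that both your computation and the paper's actually bound the \emph{squared} $L^2$-norms; the statement should be read that way, as in Lemmas \ref{L:J0lemma} and \ref{L:triangleineq}.) For (ii) your identification of $J_{0,m}^\ast u|_e$ as the average of $u$ against $\psi_{e,m}\,d\mu/\int_K\psi_{e,m}\,d\mu$ and of $\widetilde{J}_{1,m}u|_e$ as a convex combination of $u(i(e))$ and $u(j(e))$, with the oscillation over $\supp\psi_{e,m}$ controlled by (\ref{E:resistKw}), is again the paper's mechanism, and your warning that $\psi_{e,m}$ leaks into neighbouring cells is exactly the right point of care. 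The one place you fall short of the literal statement is the constant in (ii): taking the supremum of the oscillation over all of $S_e$ (three cells, hence a factor $3$ from chaining the resistance estimate), then $c_e2^{-m}=\int_K\psi_{e,m}\,d\mu\le 2\max_{|w|=m}\mu(K_w)$, and up to $9$ edges per cell in the final reordering, gives a bound of order $54\,r^m\max_{|w|=m}\mu(K_w)\,\mathcal{E}(u)$ rather than $\tfrac92$. The paper gets the smaller constant by keeping the weight inside the integral via Cauchy--Schwarz, using $\int_K\psi_{e,m}\,d\mu\le\tfrac12\mu(\supp\psi_{e,m})$, and organizing the double sum as $\sum_{|w|=m}\mathcal{E}_{K_w}(u)\sum_{e\subset K_w}\mu(\supp\psi_{e,m})$ over only the three edges of each cell. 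Since the lemma is used downstream only through the order $r^m\max_{|w|=m}\mu(K_w)$ (it feeds into $\delta_m$), your weaker constant is harmless for the application, but as written your sketch proves the inequality in (ii) only with a larger constant than the stated $\tfrac92$.
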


\begin{proof}
To see (i) note first that 
\begin{align}
(J_{1,m}f(x)-J_{0,m}f(x))^2&=\left(\sum_{p\in V_m}c(p)\psi_{p,m}(x)\sum_{e:p\in e} (f_e(p)-\overline{f_e}) \right)^2\notag\\
&\leq 2^{-m}\left(\sum_{p\in V_m}c(p)\psi_{p,m}(x)\sum_{e:p\in e}\mathcal{E}_e(f_e)^{1/2   }\right)^2\notag
\end{align}
for any $x\in K$ by (\ref{E:resistonedge}). Consequently
\begin{align}
(J_{1,m}f(x)&-J_{0,m}f(x))^2\notag\\
 &\leq 2^{-m}\sum_{|w|=m}\int_{K_w}\left(\sum_{p\in V_m\cap K_w}c(p)\psi_{p,m}(x)\sum_{e:p\in e}\mathcal{E}_e(f_e)^{1/2}\right)^2\mu(dx)\notag\\
&\leq 2^{-m}\sum_{|w|=m}\int_{K_w}\left(\sum_{e\in E_m, e\sim K_w}\mathcal{E}_e(f_e)^{1/2}\sum_{p\in V_m\cap K_w} c(p)\psi_{p,m}(x)\right)^2\mu(dx)\notag\\
&\leq 2^{-m}\sum_{|w|=m}\left(\sum_{e\in E_m, e\sim K_w}\mathcal{E}_e(f_e)^{1/2}\right)^2\int_{K_w}\left(\sum_{p\in V_m\cap K_w}c(p)\psi_{p,m}(x)\right)^2\mu(dx)\notag\\
&\leq 36\cdot 2^{-m}\sum_{e\in E_m} \mathcal{E}_e(f_e)\:\max_{|w|=m}\mu(K_w),\notag
\end{align}
what implies (i). To show (ii) we use that by (\ref{E:sumpsiem}) we have
\[\widetilde{J}_{1,m}u(s)-J_{0,m}^\ast u(s)=\sum_{e\in E_m}\mathbf{1}_e(s)\frac{\left\langle (H_m(u)_e(s)-u,\psi_{e,m}\right\rangle_{L^2(K,\mu)}}{\int_K\psi_{e,m}d\mu}\] 
and that for fixed $e\in E_m$ and $s\in e$,
\begin{align}
\frac{\left\langle H_m(u)_e(s)-u,\psi_{e,m}\right\rangle_{L^2(K,\mu)}^2}{\int_{K}\psi_{e,m}d\mu}&\leq \int_{\supp\psi_{e,m}}(H_m(u)_e(s)-u(x))^2\psi_{e,m}(x)\mu(dx)\notag\\
&\leq \sum_{|w|=m, e\cap K_w\neq 0} r^m\mathcal{E}_{K_w}(u)\int_{\supp\psi_{e,m}}\psi_{e,m}d\mu.\notag
\end{align}
Integrating, we obtain
\begin{align}
\left\|\widetilde{J}_{1,m}u-J_{0,m}^\ast u\right\|_{L^2(X_m,\mu_m)}^2 &\leq \sum_{e\in E_m}2^m\int_0^{2^{-m}}\left\langle H_m(u)_e(s)-u,\psi_{e,m}\right\rangle_{L^2(K,\mu)}^2ds\frac{1}{\int_K\psi_{e,m}d\mu}\notag\\
&\leq \frac{1}{2}\:r^m\sum_{|w|=m} \mathcal{E}_{K_w}(u) \sum_{e\in E_m: e\cap K_w\neq \emptyset} \mu(\supp\psi_{e,m}) \notag\\
&\leq \frac92(\max_{|w|=m}\mu(K_w))\:r^m \mathcal{E}(u).\notag
\end{align}
\end{proof}

\begin{lemma}\label{L:triangleineq}
For any $u\in \mathcal{D}(\mathcal{E})$ we have 
\[\left\|u-J_{0,m} J_{0,m}^\ast u\right\|_{L^2(K,\mu)}^2\leq 6\max_{|w|=m}\mu(K_w)r^m\mathcal{E}(u).\]
\end{lemma}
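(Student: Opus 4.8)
The plan is to compute $J_{0,m}J_{0,m}^\ast u$ explicitly and then exploit that the functions $\psi_{e,m}$ form a partition of unity on $K$. By Proposition \ref{P:bounded} the function $J_{0,m}^\ast u$ is constant on each edge $e\in E_m$, with value
\[
a_e:=\frac{\langle u,\psi_{e,m}\rangle_{L^2(K,\mu)}}{\int_K\psi_{e,m}\,d\mu},
\]
so that $J_{0,m}J_{0,m}^\ast u=\sum_{e\in E_m}a_e\psi_{e,m}$. Since $\sum_{e\in E_m}\psi_{e,m}\equiv 1$ on $K$ by (\ref{E:sumpsiem}), we may write $u-J_{0,m}J_{0,m}^\ast u=\sum_{e\in E_m}(u-a_e)\psi_{e,m}$, and a Cauchy--Schwarz estimate in the index $e$ against this partition of unity gives
\[
\big\|u-J_{0,m}J_{0,m}^\ast u\big\|_{L^2(K,\mu)}^2\le \sum_{e\in E_m}\int_K (u-a_e)^2\,\psi_{e,m}\,d\mu .
\]

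Next I would estimate each summand. The number $a_e$ is the mean of $u$ with respect to the finite measure $\psi_{e,m}\,d\mu$, hence it minimizes $c\mapsto\int_K(u-c)^2\psi_{e,m}\,d\mu$; in particular $\int_K(u-a_e)^2\psi_{e,m}\,d\mu\le\int_K(u-u(i(e)))^2\psi_{e,m}\,d\mu$. The support of $\psi_{e,m}=c(i(e))\psi_{i(e),m}+c(j(e))\psi_{j(e),m}$ is contained in the union of the (at most three) $m$-cells $K_w$ that contain $i(e)$ or $j(e)$, and both endpoints of $e$ lie in one common cell $K_{w_0}$. For $x$ in $K_{w_0}$ or in the cell sharing $i(e)$ with $K_{w_0}$, the resistance estimate (\ref{E:resistKw}) bounds $(u(x)-u(i(e)))^2$ by $r^m\mathcal{E}_{K_w}(u)$ directly; for $x$ in the cell $K_{w_2}$ sharing $j(e)$ with $K_{w_0}$ one routes through $j(e)$, applying (\ref{E:resistKw}) once on $K_{w_2}$ and once on $K_{w_0}$. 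Using $0\le\psi_{e,m}\le 1$, this yields
\[
\int_K (u-a_e)^2\,\psi_{e,m}\,d\mu\le r^m\Big(\max_{|w|=m}\mu(K_w)\Big)\sum_{w}\alpha_{e,w}\,\mathcal{E}_{K_w}(u),
\]
with nonnegative integer coefficients $\alpha_{e,w}$ bounded by an absolute constant and vanishing unless $K_w\subset\supp\psi_{e,m}$.

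Finally I would sum over $e\in E_m$ and use that every $m$-cell $K_w$ belongs to $\supp\psi_{e,m}$ for only boundedly many edges $e$ (each of the three vertices of $K_w$ is incident to at most four edges of $E_m$ and is shared by at most one further cell), so $\sum_e\alpha_{e,w}$ is bounded by an absolute constant; together with $\sum_{|w|=m}\mathcal{E}_{K_w}(u)=\mathcal{E}(u)$ this gives the asserted bound. The main obstacle is exactly this last bookkeeping: tracking the (small) multiplicities with which the individual $\mathcal{E}_{K_w}(u)$ appear, and sharpening the pointwise bound on $\psi_{e,m}$ (one has $\psi_{e,m}\le\tfrac12$, and $\psi_{e,m}\le\tfrac14$ away from the finitely many edges meeting $V_0$), is what brings the constant down to $6$; the rest is routine. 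As an alternative one can obtain the estimate, with a worse constant, from the triangle inequality
\[
u-J_{0,m}J_{0,m}^\ast u=(u-H_mu)+\big(H_mu-J_{0,m}\widetilde{J}_{1,m}u\big)+J_{0,m}\big(\widetilde{J}_{1,m}u-J_{0,m}^\ast u\big),
\]
combined with Proposition \ref{P:bounded}, Lemma \ref{L:compare01}, and the bound $\|u-H_mu\|_{L^2(K,\mu)}^2\le r^m\big(\max_{|w|=m}\mu(K_w)\big)\mathcal{E}(u-H_mu)\le r^m\big(\max_{|w|=m}\mu(K_w)\big)\mathcal{E}(u)$, which again follows from (\ref{E:resistKw}).
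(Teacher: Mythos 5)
Your primary argument takes a genuinely different route from the paper's. You expand $J_{0,m}J_{0,m}^\ast u$ directly as the $\psi_{e,m}$-weighted combination of the means $a_e$, apply Jensen's inequality against the partition of unity (\ref{E:sumpsiem}), use that $a_e$ minimizes the weighted variance, and then invoke the resistance estimate (\ref{E:resistKw}) cell by cell together with an overlap count. This is self-contained and certainly produces a bound of the form $C\,r^m\max_{|w|=m}\mu(K_w)\,\mathcal{E}(u)$ with an absolute constant $C$. The paper instead routes through the intermediate operator $J_{0,m}\widetilde{J}_{1,m}$: it first proves the one-step estimate (\ref{E:PStrick}), namely $\|u-J_{0,m}\widetilde{J}_{1,m}u\|_{L^2(K,\mu)}^2\le \max_{|w|=m}\mu(K_w)\,r^m\mathcal{E}(u)$ with constant $1$, and then concludes by the triangle inequality combined with the contractivity of $J_{0,m}$ (Proposition \ref{P:bounded}) and Lemma \ref{L:compare01}(ii). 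Your ``alternative'' at the end is essentially this argument, except that you split $u-J_{0,m}\widetilde{J}_{1,m}u$ further through $H_mu$ into two pieces controlled by (\ref{E:resistKw}) and Lemma \ref{L:compare01}(i), where the paper handles that difference in one stroke; your intermediate bound $\|u-H_mu\|_{L^2(K,\mu)}^2\le r^m\max_{|w|=m}\mu(K_w)\,\mathcal{E}(u)$ is correct.

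The one thing neither of your routes actually delivers is the stated constant $6$: in the main route you explicitly defer the decisive bookkeeping (a naive count --- each $m$-cell meets $\supp\psi_{e,m}$ for up to a dozen edges $e$, with routing factors of $2$ --- lands well above $6$), and in the alternative route you concede a worse constant. To be fair, the paper's own combination of (\ref{E:PStrick}) with Lemma \ref{L:compare01}(ii) yields $(1+\sqrt{9/2})^2\approx 9.7$ rather than $6$, and only the order $r^m\max_{|w|=m}\mu(K_w)$ is used downstream (in the definition of $\delta_m$), so this is a cosmetic discrepancy rather than a substantive gap. If you want a clean write-up, follow your alternative route but prove the analogue of (\ref{E:PStrick}) directly, as the paper does, rather than chasing the partition-of-unity multiplicities.
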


\begin{proof}
We follow \cite[Lemma 2.3]{PS17} and prove that for any $u\in \mathcal{D}(\mathcal{E})$ we have 
\begin{equation}\label{E:PStrick}
\left\|u-J_{0,m}\widetilde{J}_{1,m} u\right\|_{L^2(K,\mu)}^2\leq \max_{|w|=m}\mu(K_w)r^m\mathcal{E}(u).
\end{equation}
Together with the triangle inequality, Proposition \ref{P:bounded} and Lemma \ref{L:compare01} (ii) we then obtain the result. To see (\ref{E:PStrick}) note that for any $x\in K$ we have 
\[u(x)-J_{0,m}\widetilde{J}_{1,m}u(x)=\sum_{e\in E_m}2^m \int_0^{2^{-m}} (u(x)-H_m(u)_e(s)ds\:\psi_{e,m}(x).\]
Therefore
\begin{align} 
\left\|u-J_{0,m}\widetilde{J}_{1,m}u\right\|_{L^2(X,\mu)}^2&\leq \sum_{|w|=m}\sum_{e\in E_m}\sum_{e'\in E_m}\int_{K_w}\left(2^m\int_0^{2^{-m}}(u(x)-H_m(u)_e(s))ds \right)\times\notag\\
&\quad \times \left(2^m\int_0^{2^{-m}}(u(x)-H_m(u)_e(s'))ds'\right)\:\psi_{e,m}(x)\psi_{e',m}(x)\mu(dx)\notag\\
&\leq r^m\sum_{|w|=m}\mathcal{E}_{K_w}(u)\mu(K_w)\notag
\end{align}
and (\ref{E:PStrick}) follows.
\end{proof}

\begin{lemma}\label{L:Elemma}
For any $f\in W^{1,2}(X_m,\mu_m)$ and $u\in\mathcal{D}(\mathcal{E})$ we have
\[\mathcal{E}_{\Gamma_m}(f, \widetilde{J}_{1,m}u)-\mathcal{E}(J_{1,m}f, u)=0.\]
\end{lemma}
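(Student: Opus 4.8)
The plan is to show that both bilinear expressions reduce to one and the same discrete quantity, namely the polarization of $\mathcal{E}_m$ evaluated at the vertex data $f|_{V_m}$ and $u|_{V_m}$. Note that $f\in W^{1,2}(X_m,\mu_m)=\dot{W}^{1,2}(X_m)$ is continuous on $X_m$ and $u\in\mathcal{D}(\mathcal{E})$ is continuous on $K$, so the vertex values $f(p)$ and $u(p)$, $p\in V_m$, are well defined, and for each edge $e\in E_m$ one has $\int_0^{2^{-m}}f_e'(t)\,dt=f(j(e))-f(i(e))$.

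First I would handle the left-hand term. Since $\widetilde{J}_{1,m}u=H_m(u)|_{X_m}$ lies in $EL_m$, its derivative on each edge $e$ is the constant $2^m(u(j(e))-u(i(e)))$, so the formula for $\mathcal{E}_{\Gamma_m}$ shows that $\mathcal{E}_{\Gamma_m}(f,\widetilde{J}_{1,m}u)$ depends on $f$ only through $f|_{V_m}$; more precisely $\mathcal{E}_{\Gamma_m}(f,\widetilde{J}_{1,m}u)=\mathcal{E}_{\Gamma_m}(H_{\Gamma_m}f,\widetilde{J}_{1,m}u)$. Both arguments now lie in the linear space $EL_m$, on which $\mathcal{E}_{\Gamma_m}(g)=\mathcal{E}_m(g|_{V_m})$; polarizing this identity and using $(H_{\Gamma_m}f)|_{V_m}=f|_{V_m}$ and $(\widetilde{J}_{1,m}u)|_{V_m}=u|_{V_m}$ yields
\[
\mathcal{E}_{\Gamma_m}(f,\widetilde{J}_{1,m}u)=\mathcal{E}_m(f|_{V_m},u|_{V_m}),
\]
where $\mathcal{E}_m(\cdot,\cdot)$ denotes the polarization of $\mathcal{E}_m$.

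For the right-hand term I would use the variational characterization of $H_m$: $J_{1,m}f=H_m(f|_{V_m})$ minimizes $\mathcal{E}$ among all functions in $\mathcal{D}(\mathcal{E})$ with the prescribed values $f|_{V_m}$ on $V_m$, so differentiating $t\mapsto\mathcal{E}(H_m(f|_{V_m})+tv)$ at $t=0$ gives $\mathcal{E}(J_{1,m}f,v)=0$ for every $v\in\mathcal{D}(\mathcal{E})$ with $v|_{V_m}\equiv 0$. Writing $u=H_m(u|_{V_m})+\big(u-H_m(u|_{V_m})\big)$ and noting that the second summand vanishes on $V_m$, this gives $\mathcal{E}(J_{1,m}f,u)=\mathcal{E}(H_m(f|_{V_m}),H_m(u|_{V_m}))$. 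Both arguments lie in the linear space $PH_m$, on which $\mathcal{E}(\phi)=\mathcal{E}_m(\phi|_{V_m})$; polarizing this identity gives $\mathcal{E}(J_{1,m}f,u)=\mathcal{E}_m(f|_{V_m},u|_{V_m})$, and comparing with the previous display proves the claim.

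The argument is essentially bookkeeping. The only point that needs a little care is the energy-orthogonality of the $m$-piecewise harmonic extension $H_m$ to perturbations vanishing on $V_m$, which is the variational characterization implicit in the definition of $m$-piecewise harmonic functions recalled above; one should also keep track of the normalization factor $2^{-m}r^{-m}$ in $\mathcal{E}_{\Gamma_m}$, which is precisely what makes both energy forms restrict to $\mathcal{E}_m$ on vertex data.
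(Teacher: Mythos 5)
Your proof is correct and follows essentially the same route as the paper's: replace $f$ by its edge-wise linear interpolation, identify the energies of edge-wise linear and $m$-piecewise harmonic functions with common vertex data, and use the energy-orthogonality of the harmonic extension to perturbations vanishing on $V_m$. You merely make explicit the intermediate quantity $\mathcal{E}_m(f|_{V_m},u|_{V_m})$ and the Gauss--Markov orthogonality step that the paper's one-line chain of equalities leaves implicit.
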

\begin{proof}
Using the operators $H_{\Gamma_m}$ and $H_m$, 
\[\mathcal{E}_{\Gamma_m}(f,\widetilde{J}_{1,m}u)=\mathcal{E}_{\Gamma_m}(H_{\Gamma_m}f,H_{\Gamma_m}(u|_{X_m})=\mathcal{E}(H_m(f|_{V_m}), H_m(u|_{V_m}))=\mathcal{E}(J_{1,m}f,u). \]
\end{proof}

To see Theorem \ref{T:conv} it now suffices to note that by Proposition \ref{P:bounded} and Lemmas \ref{L:J0lemma}, \ref{L:compare01}, \ref{L:triangleineq} and \ref{L:Elemma} the quadratic forms $\mathcal{E}_{\Gamma_m}$ and $\mathcal{E}$ are $\delta_m$-quasi unitarily equivalent on $L^2(X_m,\mu_m)$ and $L^2(K,\mu)$ in the sense of \cite[Definition 2.1]{PS17} resp. \cite[Definition 4.4.11]{P12} with $\delta_m=54\max_{|w|=m}\mu(K_w)\:r^m$. Therefore \cite[Corollary 1.2]{PS17} implies that for any $t>0$ there exists some $C_t>0$ such that
\[\left\|e^{t\mathcal{L}}-J_{0,m} e^{t \mathcal{L}_m} J_{0,m}^\ast\right\|_{L^2(K,\mu)\to L^2(K,\mu)}\leq C_t\:\delta_m,\]
see also \cite[Theorem 4.2.10 and Proposition 4.4.15]{P12}.

\end{document}